\definecolor{darkgreen}{cmyk}{1,0,1,.2}
\definecolor{m}{rgb}{1,0.1,1}
\definecolor{green}{cmyk}{1,0,1,0}
\definecolor{test}{rgb}{1,0,0}
\definecolor{cmyk}{cmyk}{0,1,1,0}
\newcommand{\leftsub}[2]{{\vphantom{#2}}_{#1}{#2}}
\newtheorem{Equation}{}[section]
\newtheorem{assumption}[Equation]{Assumption}
\newtheorem{theorem}[Equation]{Theorem}
\newtheorem{proposition}[Equation]{Proposition}
\newtheorem{lemma}[Equation]{Lemma}
\newtheorem{corollary}[Equation]{Corollary}
\newtheorem{definition}[Equation]{Definition}
\newtheorem{remark}[Equation]{Remark}
\def\Dom{\operatorname{Dom}}
\def\Im{\operatorname{Im}}
\def\Ker{\operatorname{Ker}}
\def\maE{\mathcal{E}}
\def\maK{\mathcal{K}}
\def\maL{\mathcal{L}}
\def\maG{\mathcal{G}}
\def\maF{\mathcal{F}}
\def\C{\mathbb C}
\def\R{\mathbb R}
\def\N{\mathbb N}
\def\maE{{\mathcal E}}
\def\maF{{\mathcal F}}
\def\maG{{\mathcal G}}
\def\cG{{\mathcal G}}
\def\td{{\widetilde d}}
\def\tU{\widetilde{U}}
\def\ep{\epsilon}
\begin{document}

\title[HP-complexes and leafwise maps]{Leafwise homotopies and Hilbert-Poincar\'{e} complexes\\
I. Regular HP-complexes and leafwise pull-back maps\\ \today}

\author[M-T. Benameur]{Moulay-Tahar Benameur}
\address{UMR 7122 du CNRS, Universit\'{e} Paul Verlaine-Metz, Ile du Saulcy, Metz, France}
\email{benameur@math.univ-metz.fr}

\author[I. Roy]{Indrava Roy}
\address{UMR 7122 du CNRS, Universit\'{e} Paul Verlaine-Metz, France\\and
Mathematics department, Institut f\"{u}r Mathematik, Universit\"{a}t Paderborn, Germany}
\email{indrava@gmail.com}

\date{1 May 2011}

\maketitle
\tableofcontents

\section{Introduction}

This paper is a first of a series of three papers which study some secondary homotopy invariants for laminations. More precisely, we build up a suitable
 framework
for the study of leafwise signature invariants which allows to deduce important consequences for the leafwise  homotopy classification
of laminations.
 In the third paper of this series, and under a usual Baum-Connes assumption, the second author deduces for instance that the type II Cheeger-Gromov rho invariant
associated with the leafwise  signature operator on an odd dimensional lamination, which was introduced in \cite{BenameurPiazza}, is a leafwise oriented
 homotopy invariant.
This result generalizes the case of closed odd dimensional manifolds \cite{KeswaniI, Weinberger, CW, Chang, Mathai, PiazzaSchick} and also the partial
results for
foliated topological bundles obtained in \cite{BenameurPiazza}, and we believe that the techniques involved are deep enough to enjoy their own interest.

In their work on mapping surgery to analysis, Nigel Higson and John Roe have sytematically studied the so-called Hilbert-Poincar\'{e} (abbreviated HP)
complexes \cite{MischenkoFomenko} and deduced interesting topological consequences (cf. \cite{HigsonRoe}). They defined an HP complex as a complex of
finitely-generated projective Hilbert $C^*$-modules on a $C^*$-algebra $A$ with adjointable differentials, and an additional structure of adjointable Poincar\'{e}
duality operators that induce isomorphism on cohomology from the original complex to its dual complex. Associated with an HP-complex there is a canonically
defined class in $K_1(A)$, called the signature of the HP-complex. It is shown in \cite{HigsonRoe} that a homotopy equivalence of such complexes leaves the
signature class invariant and yields an explicit homotopy, a path between the correspondng representatives of the signature classes. This explicit path
was used
in \cite{Keswani} to construct a controlled path of operators joining the signature operators on homotopy equivalent manifolds. When trying to
extend the results
of Keswani to general laminations, and in fact already to general smooth foliations, we faced the following difficulties:

\begin{enumerate}
\item  It is necessary to work with complete transversals and with the (maximal) $C^*$-algebras associated with the monodromy groupoids associated with
 the
 transversals. Therefore the HP-complexes that naturally arose were not defined over isomorphic $C^*$-algebras but only Morita equivalent ones.
\item For Galois coverings which correspond to a lamination with one leaf, the Whitney isomorphism allows to reduce the study of the de Rham complex to
a finitely generated projective
HP-complex as introduced in \cite{HigsonRoe}. Already for general foliations on closed manifolds, this reduction becomes highly involved and it is important
 thus to
extend the Higson-Roe formalism to countably
generated  HP-complex with regular operators.
\item Since an oriented leafwise homotopy equivalence induces a Morita equivalence of the $C^*$-algebras, the explicit path joining the leafwise signature
operators on the equivalent laminations needs to be rethought  up to an explicit imprimitivity bimodule.
\item The construction of the Keswani loop of unitaries  is done  by concatenating three paths. The first one is the
``spectral flow'' path which can be defined easily and whose Fuglede-Kadison log-determinant has to be related with  the measured
Cheeger-Gromov rho
invariant \cite{BenameurPiazza}. The second path uses the above mentionned path of operators associated with the leafwise homotopy and
is called the Large Time Path (LTP) while a third path called the Small Time Path (STP) needs a suitable
 description of the Baum-Connes map for laminations. This latter use of the Baum-Connes map turns out to be the hardest part of this work.
\end{enumerate}

All these problems  are solved in this paper, the second paper \cite{BenameurRoyII} by the second author and the third paper in prepration also by
the second author \cite{RoyIII}.
In this first paper, we begin by  extending the results of \cite{HigsonRoe} to deal with regular (unbounded) operators and more importantly  to take
into account Morita equivalence of underlying $C^*$-algebras. Given an oriented leafwise map, satisfying some natural assumption fulfilled by leafwise homotopies, between leafwise  oriented laminations on compact spaces in the sense
of \cite{MooreSchochet},  we  construct a pull-back morphism between the leafwise de Rham HP complexes and prove the expected (up to Morita equivalence)
functoriality. When two leafwise oriented laminations are leafwise homotopy equivalent, our construction allows to deduce an explicit path
joining the leafwise
signature operators and hence the LTP path for laminations. So, this first paper does not use any measure theory and is
only concerned with the $C^*$-algebraic
constructions associated with leafwise homotopies.
In the second paper \cite{BenameurRoyII} of this series, holonomy invariant transverse measures are introduced and a semi-finite von Neumann algebra
associated with the leafwise
homotopy equivalence between the laminations is constructed. Moreover, the second author  shows there that the measured laminated Cheeger-Gromov rho invariant
 \cite{BenameurPiazza} associated with  the leafwise signature operator is a foliated diffeomorphism invariant which does not depend on the leafwise
metric used to define it, extending the result of Cheeger-Gromov \cite{CheegerGromov}. Moreover, using the above mentioned von Neumann algebra with
its trace, he proves that the measured Fuglede-Kadison determinant
of the LTP cancels out in the large time limit. The third paper \cite{RoyIII} exploits a nice description of the Baum-Connes map to construct the STP which, when concatenated with the two previous paths
yields the allowed loop of unitaries. The Fuglede-Kadison determinant of the STP defined using again the above von Neumann algebra and its trace, is also proved to
cancel out in the small time limit. So combining the results of this series, we obtain the leafwise homotopy invariance of the measured rho invariant of the lamination.\\

Let us now briefly explain the main results of this first paper. \\

With a lamination $(M,\mathcal{F})$ on a compact space $M$, and a chosen complete transversal $X$ in the sense of \cite{MooreSchochet},
we associate the HP-complex
$$
\mathcal{E}^0_X\xrightarrow{d_X} \mathcal{E}^1_X\xrightarrow{d_X} \cdots \xrightarrow{d_X} \mathcal{E}^p_X
$$
where $\mathcal{E}^k_X$ is the completion of $C_c^{\infty, 0}(\mathcal{G}_X,r^*(\Lambda^kT^*\mathcal{F}))$
with respect to a $C^*(\mathcal{G}^{X}_{X})$-valued inner product (see Section \ref{HP} for the notations). The Poincar\'{e} duality operator,
denoted $T_X$, is  induced on $\mathcal{E}^k_X$ by the lift of the leafwise Hodge $*$-operator on $\mathcal{G}_X$, and $d_X$ is the regular operator
 induced by the lift of the leafwise de Rham differential to $\mathcal{G}_X$. Now consider a leafwise homotopy equivalence between two laminations
 $f:(M,\mathcal{F})\rightarrow (M',\mathcal{F}')$ and let $X'$ be similarly a complete transversal on $(M',\mathcal{F}')$. Then  we use the explicit
imprimitivity bimodule which implements the Morita equivalence to reduce to leafwise de Rham  HP-complexes over the same $C^*$-algebra
$C^*(\maG^{X'}_{X'})$.
Consequently, we define out of a leafwise homotopy equivalence a homotopy equivalence between the leafwise de Rham HP-complexes of the two laminations.
As a consequence, we deduce
the well-known equality of the $K_1$ signature classes of the two foliations, and more importantly an explicit path whose determinant will
play a fundamental part
in
\cite{BenameurRoyII}  as explained above.\\

The contents of the present paper are as follows. Section 2 is devoted to the generalization of the definitions and properties of HP-complexes to the
setting of regular (unbounded) operators. In Section 3, we construct the pull-back chain map, associated with an oriented leafwise map satisfying
a convenient condition fulfilled by leafwise homotopies,
between the corresponding leafwise de Rham HP-complexes, and prove its functoriality. Section 4 deals with oriented leafwise homotopy equivalences
and proves that such an equivalence induces a homotopy between the HP-complexes and an explicit path between the leafwise signature operators.\\

The main difficulties appear in the transition  from smooth manifolds to smooth foliations. The extension of our results
from smooth foliations to leafwise smooth
laminations, although highly important for applications, is  almost cosmetic. Therefore we have eventually decided  to avoid unnecessary heavy notations which would mislead the reader
and we have restricted to smooth foliations.  \\

{\em Acknowledgements.}
This work arose from discussions  with  P.  Piazza and we would like to thank him for his helpful advices during the preparation of this work.
We are also  indebted to B. Bekka, A. Gorokhovsky, J. Heitsch, N. Higson, J. Hilgert, V. Mathai, J. Roe and T. Schick for several
helpful discussions. Part of this work was done while the second author was visiting the department of mathematics
of the university La Sapienza in Rome, and while the first author was visiting the IHP in Paris. Both authors are grateful to the two institutions
 for their support and for the warm hospitality.

\section{Review of Hilbert-Poincar\'e complexes}\label{HP}

We review in this section some basic properties of a so-called Hilbert-Poincar\'e complex and collect some results that will be used in the sequel. We mainly extend some results proved in \cite{HigsonRoe} to encompass closed operators. More precisely, we put forward the necessary information required to define the signature of an HP-complex, the notion of homotopy equivalence of two HP-complexes and a crucial theorem, originally due to Higson and Roe and adapted here to our context, which states that homotopy equivalent HP-complexes have the same signature. We refer to \cite{Lance, Kustermans, Pal} for the detailed properties of Hilbert modules and regular operators that will be used here.

For a $C^*$-algebra $A$ and  Hilbert right $A$-modules $\maE$ and $\maE'$, we denote by $\maL_A (\maE, \maE')$ the set of adjointable operators from $\maE$ to $\maE'$, i.e. of maps $T:\maE \to \maE$ such that there exists a map $S:\maE \to \maE$ with the property
$$
\text{ For $v\in \maE$ and $v'\in \maE'$, }< T (v), v'> = < v, S(v')> \quad  \in A.
$$
It is then easy to see that such $S$ is unique, $A$-linear and bounded, and it will be called the adjoint of $T$ and denoted $T^*$ as usual. A trivial consequence of the definition of  an adjointable operator is that it is automatically $A$-linear and bounded. Moreover, $\maL_A(\maE, \maE')$ is naturally endowed with the structure of a Banach space. When $\maE'=\maE$, we simply denote $\maL_A (\maE, \maE')$ by $\maL_A(\maE)$ and this is then a unital $C^*$-algebra.

An example of adjointable operator is given by finite rank operators and $A$-compact operators that we recall now for convenience. A rank one operator $\theta_{v,v'}$ between $\maE$ and $\maE'$ for nonzero elements $v\in \maE$ and $v'\in \maE'$, is defined by the formula
$$
\theta_{v,v'}(z):=v <v' ,z>, \quad \text{ for } z\in \maE.
$$
Then $\theta_{v,v'}\in \maL_A(\maE, \maE')$, and finite rank operators will be finite sums of such rank one operators. The closure in $\maL_A (\maE, \maE')$ of the subspace of finite rank operators is the space of $A$-compact operators.

Finally, recall that an operator $t$ from a Hilbert $A$-module $E$  to a Hilbert $A$-module $F$ is called regular if $t$ is closed and densely defined with densely defined adjoint $t^*$ such that  the operator $1+t^*t$ has dense image in $F$. See \cite{Pal} for more details.

\subsection{Regular HP complexes}

Unless otherwise specified, our $C^*$-algebras will always be unital.

\begin{definition}\label{HPcomplex}
\label{hpcomplexes}
An $n$-dimensional Hilbert-Poincar\'{e} complex (abbreviated HP-complex) over a $C^*$-
algebra $A$ is a complex $(\maE,b,T)$ of countably generated Hilbert right $A$-modules
$$
\maE_0 \xrightarrow{b_0} \maE_1 \xrightarrow{b_1} ...\xrightarrow{b_n} \maE_n
$$
where each $b_i$ is a densely defined closed regular operator with a densely defined regular adjoint $b^*_{\bullet}: \maE_{\bullet+1}\rightarrow \maE_{\bullet}$ such that successive operators in the complex are composable (i.e. the image of one is contained in the domain of the other) and $b_{i+1}\circ b_i=0$, together with adjointable operators $T:\maE_\bullet\rightarrow \maE_{n-\bullet}$
satisfying the following properties:
\begin{enumerate}
\item For $v\in \maE_p$, $T^*v=(-1)^{(n-p)p}Tv$.
\item $T$ maps $Dom(b^*)$ to $Dom(b)$, and we have for $v \in Dom(b^*)\subset \maE_p$, $$Tb_{p-1}^*v+(-1)^{p+1}b_{n-p}Tv=0.$$
\item $T$ induces an isomorphism between the cohomology of the complex
$(\maE,b)$ and that of the dual complex $(\maE,b^*)$:
$$\maE_n \xrightarrow{b^*_{n-1}} \maE_{n-1} \xrightarrow{b^*_{n-2}} \cdots \xrightarrow{b^*_0} \maE_0 $$
i.e. the induced map $T_*: H^k(\maE,b)\rightarrow H^k(\maE,b^*)$ is an isomorphism.
\item The  operator $B:= b + b^*: \maE\rightarrow \maE$ is densely defined and  satisfies that $B\pm i$ are injective with dense images and that 
$(B\pm i)^{-1}\in \mathcal{K}_A(\mathcal{E})$.
\end{enumerate}

\end{definition}\label{S2=1}
We have denoted by $\maE$ the direct sum $\oplus_{0\leq i \leq n} \maE_i$, by $b= \oplus_{0\leq i \leq n} b_i$ and similarly for $b^*$.  
The fourth item implies that $B$ is closed and is moreover a regular Fredholm operator, i.e. it has an inverse  modulo compact operators in the direct sum Hilbert module. 
\begin{remark}
Notice that if the {\em {duality operator}} $T$ satisfies the extra-condition $T^2=\pm 1$ then condition (2) implies condition (3) of Definition \ref{HPcomplex}.
Notice also that in this case, $T$ also induces $T_*:H^k(\maE,b^*)\rightarrow H^k(\maE,b)$.

\end{remark}

 It is worth pointing out for further use that a regular operator $t$ is Fredholm if and only if it has a pseudo-left inverse and a pseudo-right inverse. A pseudo-left inverse for $t$ is an operator $G\in \maL_A(\maE)$ such that $Gt$ is closable, $\overline{Gt}\in \maL_A(\maE)$, and $\overline{Gt}= 1 \text{ mod } \maK_A(\maE)$. Similarly a pseudo-right inverse for $t$ is an operator $G'\in \maL_A(\maE)$ such that $tG'$ is closable, $\overline{tG'}\in \maL_A(\maE)$, and $\overline{tG'}= 1 \text{ mod } \maK_A(\maE)$. The cohomology of the complex $(\maE,b)$ is defined here to be the unreduced one given by
$$
H^k(\maE,b):= \frac{\Ker (b_k)}{\Im (b_{k-1})}.
$$

\begin{remark}
The complex $(\maE,b,T)$ given in the definition is understood as a two-sided infinite complex with finitely many non-zero entries.
\end{remark}

\begin{definition} [\cite{HigsonRoeII}, Definition 5.4]\label{symm}
Let $dim \maE=n=2l+1$ be odd. Define on $\maE_p$,
$$
S= i^{p(p-1)+l}T \text{ and } D=iBS.
$$
Then we call $D$ the signature operator of the HP-complex $(\maE,b,T)$.
\end{definition}

We have the following proposition from \cite{HigsonRoe}, which is valid in our setting.

\begin{proposition}[\cite{HigsonRoe}, Lemma 3.4]
With the above notations we have $S^*=S$ and $bS+Sb^*=0$.
\end{proposition}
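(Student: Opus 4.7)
The plan is to verify each identity graded-piece by graded-piece, using only conditions (1) and (2) on $T$ from Definition \ref{HPcomplex}. Since $S$ inherits the same grading as $T$, sending $\maE_{p}$ into $\maE_{n-p}$, the identity $S^{*}=S$ unfolds as $S|_{\maE_{n-p}}=(S|_{\maE_{p}})^{*}$ for every $p$, and $bS+Sb^{*}=0$ is read as an identity of operators $\maE_{p}\to \maE_{n-p+1}$.

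For self-adjointness, I would first repackage condition (1) as the block identity $(T|_{\maE_{p}})^{*}=(-1)^{p(n-p)}\,T|_{\maE_{n-p}}$, obtained by taking adjoints in the relation $T^{*}v=(-1)^{(n-p)p}Tv$ applied with $p$ replaced by $n-p$. Substituting this into $S|_{\maE_{p}}=i^{p(p-1)+l}T|_{\maE_{p}}$ reduces the claim to the purely numerical identity
\[ (-i)^{p(p-1)+l}\,(-1)^{p(n-p)}\,=\,i^{(n-p)(n-p-1)+l}. \]
Since $p(p-1)$ and $(n-p)(n-p-1)$ are both even, both sides are real; a short parity check using $n=2l+1$ (the exponents of $i$ differ by $2(2l+1)(l-p)+2p$, while the accumulated sign $(-1)^{p(p-1)+p(n-p)}$ simplifies to $(-1)^{p(n-1)}=(-1)^{2pl}=1$) shows that both sides equal $(-1)^{l}\cdot i^{p(p-1)+l}$, settling $S^{*}=S$.

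For the anticommutation, I would work on $\Dom(b^{*})\cap \maE_{p}$, on which all compositions are defined by the composability built into Definition \ref{HPcomplex}, and rewrite condition (2) as $T b^{*}_{p-1}=(-1)^{p}\,b_{n-p}T$. Substituting into $bS+Sb^{*}$ then yields
\[ (bS+Sb^{*})v\,=\,\bigl[i^{p(p-1)+l}+(-1)^{p}\,i^{(p-1)(p-2)+l}\bigr]\,b_{n-p}T v, \]
and the bracket factors as $i^{(p-1)(p-2)+l}\bigl[i^{2(p-1)}+(-1)^{p}\bigr]=i^{(p-1)(p-2)+l}\bigl[(-1)^{p-1}+(-1)^{p}\bigr]=0$. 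Closability then propagates the identity from the dense subspace to the full domain of $b^{*}$. The only real subtlety is the bookkeeping with domains, which is handled precisely by the composability hypothesis in Definition \ref{HPcomplex}; beyond that, the proof is a mechanical calculation with powers of $i$ and I do not anticipate a serious obstacle.
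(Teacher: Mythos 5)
Your computation is correct: the scalar identity for $S^*=S$ checks out (with $n=2l+1$ both sides reduce to $(-1)^l\,i^{p(p-1)+l}$), the cancellation $i^{2(p-1)}+(-1)^p=0$ gives the anticommutation, and the domain issues are exactly covered by condition (2) of Definition \ref{HPcomplex}. The paper offers no proof of this proposition, deferring to Higson--Roe, and your degree-by-degree verification is precisely the standard argument there; the only superfluous step is the closing ``closability'' remark, since your identity is already established directly on all of $\Dom(b^*)$.
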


Recall that a regular operator $t$ is adjointably invertible if there exists an adjointable operator $s$ such that $st\subseteq ts=1$. Notice that when $t$ is self-adjoint, this is equivalent to the surjectivity of $t$, see \cite{Kustermans}. We now extend Proposition 2.1 in the first paper of \cite{HigsonRoe} to our setting. By definition, an HP-complex is acyclic if its cohomology groups are all zero.

\begin{proposition}
An HP complex is acyclic if and only if the operator $B$ is adjointably invertible.
Moreover, in this case $B^{-1}\in \mathcal{K}_{\mathcal{A}}(\mathcal{E})$.
\end{proposition}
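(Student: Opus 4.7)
The plan is to exploit the formal identity $B^2 = bb^*+b^*b$ (which follows from $b^2=0=(b^*)^2$, on the appropriate dense domain in $\mathcal{E}$) together with the commutation $[B^2,b]=0=[B^2,b^*]$. Note first that condition (4), combined with standard facts on regular operators on Hilbert $C^*$-modules, upgrades to the statement that $B$ is a self-adjoint regular operator whose resolvents $(B\pm i)^{-1}$ are compact adjointable. In particular $B^2$ is a positive self-adjoint regular operator and the continuous functional calculus applies to it.

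For the \emph{``if''} direction, assume $B$ is adjointably invertible, so $B^{-2}\in\mathcal{L}_A(\mathcal{E})$. Given $v\in\ker b_k$, set $w:=b_{k-1}^*\, B^{-2}v$, which is well defined because $\mathrm{Dom}(B^2)\subseteq\mathrm{Dom}(b^*)$. Using $B^2=bb^*+b^*b$, one computes
\[
b_{k-1}w \;=\; bb^*B^{-2}v \;=\; v - b^*b\, B^{-2}v \;=\; v - b^* B^{-2}(bv) \;=\; v,
\]
the penultimate equality following from $bB^{-2}=B^{-2}b$ on $\mathrm{Dom}(b)$ (which is a consequence of $bB^2=B^2b$). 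Hence $\ker b_k=\mathrm{Im}\, b_{k-1}$ for all $k$, i.e. the complex is acyclic.

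For the \emph{``only if''} direction, first observe that $\ker B=\ker b\cap\ker b^*$: if $v\in\mathrm{Dom}(B^2)$ and $B^2v=0$, pairing with $v$ in the $A$-valued inner product gives $\langle bv,bv\rangle+\langle b^*v,b^*v\rangle=0$. Acyclicity forces $\ker b\cap\ker b^*=0$: for such a $v$, acyclicity writes $v=bw$, and then $\langle v,v\rangle=\langle b^*v,w\rangle=0$. So $\ker B=0$. Now pass to the bounded transform $F:=B(1+B^2)^{-1/2}$, a bounded self-adjoint adjointable operator with $\ker F=\ker B=0$ and $1-F^2=(1+B^2)^{-1}\in\mathcal{K}_A(\mathcal{E})$. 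Thus $F$ is Fredholm in the Hilbert $C^*$-module sense, hence has closed image by the Mingo–Phillips version of Atkinson's theorem. Combined with $(\mathrm{Im}\,F)^\perp=\ker F^*=\ker F=0$, this yields $F$ surjective, so $F$ is adjointably invertible. Because $(1+B^2)^{1/2}$ is a positive self-adjoint regular operator with everywhere-defined bounded inverse $(1+B^2)^{-1/2}\in\mathcal{L}_A(\mathcal{E})$, and since $B=F(1+B^2)^{1/2}$ on $\mathrm{Dom}(B)$, we conclude that $B^{-1}=(1+B^2)^{-1/2}F^{-1}$ is a well-defined bounded adjointable operator, i.e. $B$ is adjointably invertible. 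The compactness of $B^{-1}$ follows from this same factorization: $(1+B^2)^{-1/2}$ is compact, being $f((1+B^2)^{-1})$ for the continuous function $f(x)=\sqrt{x}$ with $f(0)=0$, applied to the compact positive operator $(1+B^2)^{-1}$; composing with the adjointable $F^{-1}$ keeps it in $\mathcal{K}_A(\mathcal{E})$.

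The main obstacle is the passage from Higson–Roe's bounded setting to regular operators: once the bounded transform $F$ is brought in, the argument reduces to Hilbert $C^*$-module Fredholm theory, where one must invoke the nontrivial fact that an adjointable Fredholm operator on a Hilbert $C^*$-module has closed image. All the surrounding manipulations are routine domain-tracking arguments for regular operators, but verifying carefully that the identities $B^2=bb^*+b^*b$ and $bB^{-2}=B^{-2}b$ hold on precisely the domain in which we apply them (needed for the ``if'' direction to be rigorous) is the only place where one must be cautious.
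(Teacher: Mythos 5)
Your ``if'' direction is essentially sound: it is a Hodge-theoretic variant of the paper's argument (the paper writes $v=Bw$ and shows $\langle b^*w,b^*w\rangle=0$, you write $v=B^2u$ and kill the $b^*b$-term), and the domain-tracking issues you flag there are real but routine.

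The ``only if'' direction, however, has a genuine gap. You use acyclicity only to conclude $\Ker B=\Ker b\cap\Ker b^*=0$, and then try to get surjectivity of the bounded transform $F=B(1+B^2)^{-1/2}$ from ``$F$ Fredholm $\Rightarrow$ $\Im F$ closed'' plus $(\Im F)^\perp=\Ker F=0$. That implication is false for Hilbert $C^*$-modules: being invertible modulo $\maK_A(\maE)$ does not force closed range (the generalized Atkinson theorem only produces a \emph{compact perturbation} of $F$ with closed range). A concrete counterexample to your chain of reasoning: take $A=\maE=C[0,1]$ and $B=$ multiplication by $x/\sqrt{1-x^2}$. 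This is a self-adjoint regular operator with $(B\pm i)^{-1}\in\maK_A(\maE)=A$ and $\Ker B=0$, yet $B$ is not surjective and its bounded transform $F=M_x$ has dense, non-closed range. So injectivity plus compact resolvent --- which is all your argument extracts from acyclicity --- cannot imply adjointable invertibility. Acyclicity must enter more substantially. The paper's route: acyclicity gives $\Im(b)=\Ker(b)$, hence $\Im(b)$ is \emph{closed}; the open mapping theorem then yields $\Im(Q(b))\subseteq\Im(Q(b)Q(b^*))$ and likewise for $b^*$; since $\Im(Q(b))$ is closed one gets the orthogonal decomposition $\maE=\Im(Q(b))\oplus\Im(Q(b^*))$, and the identities $Q(b)^2=Q(b^*)^2=0$, $Q(b+b^*)=Q(b)+Q(b^*)$ (Lemma \ref{Q(b)}) then exhibit every $v\in\maE$ in the image of $Q(B)$, whence $B$ is surjective and, being self-adjoint regular, adjointably invertible. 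Your closing observation that $B^{-1}$ is compact once invertibility is known (via $(1+B^2)^{-1/2}\in\maK_A(\maE)$, or more directly $B^{-1}=(B+i)^{-1}(1+iB^{-1})$) is fine, but it rests on the step that is missing.
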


\begin{proof}\
We adapt the proof given in \cite{HigsonRoe}. Assume that $B$ is invertible. Then for $v\in \Ker(b)$, there exists $w\in Dom(B)$ such that $v=Bw$, and
$$
\| b^*w\|^2=\|<b^*w,b^*w>\| = \|<w,bb^*w>\| = \|<w,bBw>\| =0,
$$
and hence $v= Bw=bw \in \Im(b)$. Therefore $\Ker(b)=\Im(b)$ and thus the complex is acyclic.

Conversely, let the HP-complex be acyclic. To prove that $B$ is adjointably invertible it suffices to prove that $B$ is surjective. Since all the cohomologies are trivial, $\Im(b)=\Ker(b)$, so the range of $b$ is closed.
Since the differentials $b_k, k= 0,1...,n$ are regular operators,  $Q(b)= b(1+b^*b)^{-1/2}$ is a bounded adjointable operator and we have $\Im(b)= \Im(Q(b))$, $\Ker(b)= \Ker(Q(b))$. Then by the Open Mapping Theorem, $Q(b)Q(b^*)$ is bounded below on $\Im(Q(b))$ and therefore $\Im(Q(b))\subseteq \Im(Q(b)Q(b^*))$, see Theorem 3.2 in \cite{Lance} . Similarly, $\Im(Q(b^*))\subseteq \Im(Q(b^*)Q(b))$.

Now, as $\Im(Q(b))$ is closed, $\Ker(Q(b))$ is an orthocomplemented submodule with $\Ker(Q(b))^\perp = \Im(Q(b)^*)= \Im(Q(b^*))$. Hence we have $\maE = \Im(Q(b))\oplus \Im(Q(b^*))$. So for any $v\in \maE$, we have $v= Q(b)v_1+Q(b^*)v_2$ for some $v_1,v_2\in \maE$. However $Q(b)v_1= Q(b)Q(b^*)w_1$, and $Q(b^*)v_2= Q(b^*)Q(b)w_2$  for some $w_1,w_2 \in \maE$. Hence we have for any $v \in \maE$,
$$
v = Q(b)Q(b^*)w_1+ Q(b^*)Q(b)w_2.
$$
But we have by Lemma \ref{Q(b)} below
$$
Q(b)^2 =Q(b^*)^2=0 \text{ and } Q(b+b^*)=Q(b)+Q(b^*).
$$
So  we have
$$
v= (Q(b)+Q(b^*))(Q(b^*)w_1+Q(b)w_2),
$$
which shows that $Q(b)+Q(b^*)$ is surjective and hence so is $Q(b+b^*)$. However, $\Im(Q(b+b^*))=\Im(B)$ and hence $B$ is surjective and thus invertible.
\end{proof}

\begin{lemma}\label{Q(b)}\
 We have
  $$
  Q(b)^2 =Q(b^*)^2=0 \text{ and } Q(b+b^*)=Q(b)+Q(b^*).
  $$
\end{lemma}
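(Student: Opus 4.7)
The plan is to exploit the complex structure $b^2 = 0$ (and hence $(b^*)^2 = 0$, by taking adjoints of $b_{i+1}\circ b_i = 0$) together with the functional calculus for the commuting regular self-adjoint operators $b^*b$ and $bb^*$.

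For $Q(b)^2 = 0$, I would fix $v\in\maE$ and set $w = (1+b^*b)^{-1/2}v$. The image of $(1+b^*b)^{-1/2}$ equals $\Dom((1+b^*b)^{1/2}) = \Dom(b)$, so $w\in\Dom(b)$ and $Q(b)v = bw \in \Im(b)$. Because $b^2 = 0$ on its natural domain (successive $b_i$ being composable in the complex), one has $\Im(b)\subseteq \Ker(b) = \Ker(b^*b)$, and functional calculus gives that $(1+b^*b)^{-1/2}$ restricts to the identity on $\Ker(b^*b)$. Hence $(1+b^*b)^{-1/2}(bw) = bw$ and, since $bw\in\Ker(b)\subseteq\Dom(b)$, one gets $b(bw) = b^2 w = 0$. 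This proves $Q(b)^2 v = 0$ for every $v$. The relation $Q(b^*)^2 = 0$ then follows by taking adjoints, using the standard identity $Q(b)^* = Q(b^*)$ for regular operators.

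For the additivity $Q(b+b^*) = Q(b) + Q(b^*)$, the plan is as follows. Since $b^2 = (b^*)^2 = 0$, one has $(b+b^*)^2 = b^*b + bb^*$ on the natural domain. The relations $(b^*b)(bb^*) = b^*b^2 b^* = 0$ and $(bb^*)(b^*b) = b(b^*)^2 b = 0$ promote (through the resolvents $(1+b^*b)^{-1}$ and $(1+bb^*)^{-1}$) to commutativity of the bounded functional calculi, so $b^*b$ and $bb^*$ strongly commute and admit a joint functional calculus. The core identity is
$$
b(1+b^*b+bb^*)^{-1/2} = b(1+b^*b)^{-1/2},
$$
which I would prove by splitting $\maE = \Ker(bb^*)\oplus\overline{\Im(bb^*)}$ via the spectral projection of $bb^*$ and noting that both bounded factors above preserve this decomposition. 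On $\Ker(bb^*)$ the joint functional calculus reduces $(1+b^*b+bb^*)^{-1/2}$ to $(1+b^*b)^{-1/2}$; on $\overline{\Im(bb^*)}\subseteq\overline{\Im(b)}\subseteq\Ker(b)$ the operator $b$ annihilates any image of the bounded factor, so both sides vanish. The symmetric identity $b^*(1+b^*b+bb^*)^{-1/2} = b^*(1+bb^*)^{-1/2}$ is obtained analogously, and summing the two gives $Q(b+b^*) = Q(b) + Q(b^*)$.

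The main obstacle will be the careful handling of the joint functional calculus for the strongly commuting regular self-adjoint operators $b^*b$ and $bb^*$ on a Hilbert $A$-module, together with the domain bookkeeping needed when composing unbounded operators with their bounded transforms. These manipulations are routine in the Hilbert-space setting treated in \cite{HigsonRoe}, but require the regular-operator machinery of \cite{Lance, Kustermans, Pal} to be invoked carefully in the present framework.
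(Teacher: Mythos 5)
Your argument for $Q(b)^2=Q(b^*)^2=0$ is correct and is essentially the paper's: it amounts to the observation that $\Im(Q(b))=\Im(b)\subseteq\Ker(b)=\Ker(Q(b))$, which the paper quotes as a classical fact about the bounded transform. Your identification of the core identity $b\bigl(1+b^*b+bb^*\bigr)^{-1/2}=b(1+b^*b)^{-1/2}$ as the heart of the additivity statement is also exactly right. The problem is the way you propose to prove it. You want to split $\maE=\Ker(bb^*)\oplus\overline{\Im(bb^*)}$ ``via the spectral projection of $bb^*$'' and argue separately on each summand. In a Hilbert space this is fine, but here $\maE$ is a Hilbert $C^*$-module: a closed submodule need not be orthocomplemented, $\Ker(bb^*)^{\perp}$ need not coincide with $\overline{\Im(bb^*)}$, and the spectral projection onto $\Ker(bb^*)$ (the Borel functional calculus applied to the characteristic function of $\{0\}$) is in general not an adjointable operator --- only the continuous functional calculus is available for regular self-adjoint operators on Hilbert modules. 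Indeed, elsewhere in this paper the orthocomplementation $\maE=\Im(Q(b))\oplus\Im(Q(b^*))$ is only invoked \emph{after} the range of $Q(b)$ has been shown to be closed (in the acyclic case); no such closedness is available in the generality of this lemma. So the decomposition step, as written, would fail.

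The paper's proof avoids this entirely by a purely algebraic manipulation with the bounded transform. Setting $f=Q(b)$, one has $(1+b^*b)^{-1/2}=(1-f^*f)^{1/2}$ and $(1+bb^*)^{-1/2}=(1-ff^*)^{1/2}$, and the resolvent identity $\bigl(1+(b+b^*)^2\bigr)^{-1/2}=(1+b^*b)^{-1/2}(1+bb^*)^{-1/2}$ (your strong commutation observation, which is correct) gives $Q(b+b^*)=fB+f^*A$ with $A=(1+b^*b)^{-1/2}$, $B=(1+bb^*)^{-1/2}$. Then one uses only the polynomial identities $fp(f^*f)=p(ff^*)f$ and, since $f^2=0$, $f\,p(ff^*)=p(0)f$, extended by continuity to $p\in C([0,1])$; taking $p(x)=(1-x)^{1/2}$ yields $fB=f$ and $f^*A=f^*$, hence $Q(b+b^*)=f+f^*$. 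If you replace your spectral-projection argument by this $C^*$-algebraic computation (or by any other argument that stays within the continuous functional calculus), your proof goes through.
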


\begin{proof}
A classical argument gives (see also \cite{SkandalisCourse}, \emph{Remarques (a)}, page 74)
$$
\Ker (Q(b)) = \Ker (b) \text{ and } \Im (Q(b))=\Im (b).
$$
This shows that $Q(b)^2 =0$. The same argument applied to $b^*$ gives $Q(b^*)^2=0$.

Let now $f= Q(b)$. Then we have $b= f(1-f^*f)^{-1/2}$, $(1+b^*b)^{-1/2}= (1-f^*f)^{1/2}$, and since $fp(f^*f)= p(ff^*)f$ for any polynomial $p$, by continuity it also holds for any $p\in C([0,1])$. So in particular we have
\begin{equation}\label{propf1}
f(1-f^*f)^{1/2} = (1-ff^*)^{1/2} f.
\end{equation}
Notice now that the operators $(1+b^*b)(1+bb^*)$, $(1+bb^*)(1+b^*b)$ and $1+(b+b^*)^2$, defined on
$\Dom (b^*b)\cap\Dom (bb^*)$, coincide. Therefore, setting $A=(1+b^*b)^{-1/2}$ and $B=(1+bb^*)^{-1/2}$ and using the spectral theorem, it is easy to see that we have the
following equality
of unbounded operators
$$
AB = BA = [(1+b^*b)(1+bb^*)]^{-1/2}.
$$
Hence
$$
Q(b+b^*) = (b+b^*)[1+(b+b^*)^2]^{-1/2}=(b+b^*) [(1+b^*b)(1+bb^*)]^{-1/2} = (b+b^*) AB = bAB +b^*BA=fB+f^*A.
$$
We note that for any polynomial $p$ with $p(0)=1$, we have $fp(ff^*)=f$, since $f^2=0$. So the equality also holds by continuity for any $p\in C([0,1])$ for which $p(0)=1$. In particular we note  that
\begin{equation}\label{propf2}
   fB= f(1-ff^*)^{1/2}=f.
\end{equation}
Similarly $f^*A=f^*$ and we finally get
$$
Q(b+b^*) = fB+f^*A=f+f^*= Q(b)+Q(b)^*=Q(b)+Q(b^*).
$$
The above computation is justified by the facts that $\Im(b) \subseteq Dom(b)$ and $\Im(1+b^*b)^{-1/2} \subseteq Dom(b)$.

\end{proof}

\begin{proposition}[\cite{HigsonRoe}, Lemma 3.5]
The self-adjoint operators $B\pm S: \maE\rightarrow \maE$ are invertible.
\end{proposition}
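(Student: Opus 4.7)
The plan is to follow the Higson--Roe strategy of showing that $B$ and $S$ anticommute, so that $(B \pm S)^2 = B^2 + S^2$ is manifestly nonnegative, and then read off invertibility from the isomorphism condition (3) on cohomology. Since $B$ is self-adjoint regular and Fredholm by Definition \ref{HPcomplex}(4) and $S$ is bounded adjointable with $S^* = S$ by the preceding proposition, $B \pm S$ is again self-adjoint regular on $\Dom B$ with compact resolvent, hence a Fredholm self-adjoint operator.

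The crucial identity $BS + SB = 0$ is obtained by decomposing $BS + SB = (bS + Sb^*) + (b^*S + Sb)$ according to the graded components $\maE_p \to \maE_{n-p \pm 1}$; the first summand vanishes by the preceding proposition, while the companion identity $b^*S + Sb = 0$ is extracted by the same computation applied to condition (2) of Definition \ref{HPcomplex} at degree $n - p$, combined with the symmetry $T^* = (-1)^{(n-p)p}T$ of condition (1). On the natural common core for $B$ one then obtains $(B \pm S)^2 = B^2 + S^2$ as regular self-adjoint operators, after verifying that $S$ preserves $\Dom B$ (itself a consequence of the compatibility conditions (1)--(2)).

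To conclude, it suffices to show $\Ker(B \pm S) = 0$. If $(B \pm S) v = 0$, then $(B^2 + S^2) v = 0$, and pairing with $v$ in the $A$-valued inner product gives $\langle B v, B v \rangle + \langle S v, S v \rangle = 0$; positivity in $A$ forces $B v = 0$ and $S v = 0$. Since $B$ is regular Fredholm, $\maE$ admits a Hodge-type decomposition $\maE = \Ker B \oplus \overline{\Im b} \oplus \overline{\Im b^*}$, identifying $\Ker B$ with the unreduced cohomology $H^*(\maE, b)$ via harmonic representatives. Then $Sv = 0$ means $Tv = 0$, so $T_*[v] = 0$ in $H^*(\maE, b^*)$, and condition (3) forces $[v] = 0$, hence $v = 0$. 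The main obstacle is the careful verification of the anticommutation $\{B, S\} = 0$ and the resulting square identity in the regular unbounded setting, together with the Hodge decomposition for countably generated Hilbert modules; the former requires checking domain compatibilities on a common core, and the latter follows from the compact resolvent hypothesis exactly as in the Higson--Roe bounded case.
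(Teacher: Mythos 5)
Your argument hinges on the anticommutation $BS+SB=0$, and this is where it breaks down. Writing $BS+SB=(bS+Sb^{*})+(b^{*}S+Sb)$, the first summand does vanish by the preceding proposition, but the companion identity $b^{*}S+Sb=0$ does not follow from the axioms: condition (2) of Definition \ref{HPcomplex}, applied at any degree, only produces relations of the shape $Tb^{*}=\pm\,bT$, and taking adjoints of such a relation using the symmetry of condition (1) returns a relation of the same shape. No combination of (1) and (2) yields $Tb=\pm\,b^{*}T$, which is what $b^{*}S+Sb=0$ would require. (That identity does hold for the de Rham complex, where $T^{2}=\pm1$, but the proposition concerns general HP-complexes.) So $(B\pm S)^{2}=B^{2}+S^{2}$ is unavailable and the kernel computation collapses at the first step.

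Even granting $\Ker(B\pm S)=0$, the closing inference ``self-adjoint, Fredholm and injective, hence invertible'' is a Hilbert-space argument that does not transfer to Hilbert modules: a self-adjoint regular operator is adjointably invertible iff it is \emph{surjective}, and injectivity, even together with compact resolvent, does not force surjectivity. Likewise the Hodge-type decomposition you invoke identifies $\Ker B$ with the reduced cohomology $\Ker(b)/\overline{\Im(b)}$ and presupposes closed ranges, whereas condition (3) is a statement about the unreduced cohomology. The paper's proof avoids all of this: since $S$ intertwines $b$ and $-b^{*}$ and induces an isomorphism on cohomology, its mapping cone complex is acyclic; the earlier proposition (acyclic $\Leftrightarrow$ the associated $B$-operator is adjointably invertible) then makes $B_{S}=d_{S}+d_{S}^{*}$ invertible on $\maE\oplus\maE$, and restricting $B_{S}$ to the $\pm1$ eigenspaces of the involution swapping the two copies of $\maE$ gives exactly $B+S$ and $B-S$. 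You should reorganize your proof along these lines rather than via anticommutation.
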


\begin{proof}\ Consider the mapping cone complex of the chain map $S: (\maE,b)\rightarrow (\maE, -b^*)$ where we have changed $b^*$ to $-b^*$. Its differential is
$$
d_S=\left(
\begin{array}{ccc}
 b & 0\\
 S & b^*
 \end{array}\right)
$$
Since $S$ is an isomorphism on cohomology, its mapping cone complex is acyclic, i.e. all the cohomology groups are zero. Therefore the operator $B_S=d_S + d_S^*$ is invertible on $\maE\oplus \maE$. Now, $B_S= \left(
\begin{array}{ccc}
B& S\\
 S & B
 \end{array}\right)$
preserves the $+1$ eigenspace of the involution which interchanges the copies of $\maE$  and    identifies on this subspace with $B+S$ on $\maE$. hence $B+S$ is invertible.
Considering the $-1$ eigenspace similarly, we deduce that and $B-S$ is also invertible.
\end{proof}

\begin{definition}
Let $(\maE,b,T)$ be an odd-dimensional Hilbert-Poincar\'{e} complex and $S$ be the operator associated to
$T$ as in Definition \ref{symm}. Then the signature of $(\maE,b,T)$ is defined as the class of the self-adjoint
invertible operator $(B+S)(B-S)^{-1}\in K_1(\mathcal{K}_A(\maE))$. We denote this class by $\sigma(\maE,b,T)$.
\end{definition}

Now, let $A,B$ be $\sigma$-unital $C^\ast$-algebras which are Morita-equivalent, with Morita bimodule $\leftsub{A}{\maE}_B$. So we have $A\cong \mathcal{K}_B(\leftsub{A}{\maE}_B)$, and so there is a $\ast$-homomorphism $\phi: A\rightarrow \mathcal{L}(\leftsub{A}{\maE}_B)$.
Let now $(\maE,b,T)$ be a Hilbert-Poincar\'{e} complex of countably generated $A$-modules.
For the de Rham complexes which admit the Hodge $*$-operator, the operator $S$ is an involution.
In this case, it is easy to relate $\sigma(\maE, b, T)$ with the usual definition of the signature for
the operator $D$, i.e.
$$
(D+iI)(D-iI)^{-1}=(iBS+iI)(iBS-iI)^{-1}=(B+S)SS^{-1}(B-S)^{-1}=(B+S)(B-S)^{-1}
$$
We assume for the rest of this subsection that  $S^2=1$. We then form a Hilbert-Poincar\'{e} complex $(\maE\otimes \leftsub{A}{\maE}_B,b\otimes I)$:

$$\maE_0\otimes\leftsub{A}{\maE}_B \xrightarrow{b\otimes I} \maE_1\otimes \leftsub{A}{\maE}_B \xrightarrow{b\otimes I} \maE_2\otimes\leftsub{A}{\maE}_B...\xrightarrow{b\otimes I} \maE_n\otimes\leftsub{A}{\maE}_B$$

Let $\mathcal{M}:K_1(A)\rightarrow K_1(B)$ be the isomorphism induced by the Morita equivalence between $A$ and $B$.

\begin{proposition} With the above assumptions we have,
$\mathcal{M}[\sigma(\maE,b,T)]= \sigma(\maE\otimes \leftsub{A}{\maE}_B,b\otimes I,T\otimes I)$.
\end{proposition}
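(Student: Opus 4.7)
The plan is to show that tensoring with the imprimitivity bimodule $\leftsub{A}{\maE}_B$ intertwines, on the one hand, the construction of the signature representative and, on the other, the Morita isomorphism on $K_1$. The argument splits naturally into three steps.

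First, I would verify that $(\maE\otimes\leftsub{A}{\maE}_B,\, b\otimes I,\, T\otimes I)$ is a genuine HP-complex over $B$ in the sense of Definition \ref{HPcomplex}. The operators $b\otimes I$ and $b^*\otimes I$ are regular on the interior tensor product because $\phi:A\to \mathcal{L}_B(\leftsub{A}{\maE}_B)$ is a $*$-homomorphism and $(b\otimes I)^{*}=b^{*}\otimes I$. The algebraic relations $(T\otimes I)^{*}=(-1)^{(n-p)p}(T\otimes I)$, $(T\otimes I)(b^{*}\otimes I)+(-1)^{p+1}(b\otimes I)(T\otimes I)=0$, and the cohomological isomorphism condition are obtained by tensoring the corresponding relations on $(\maE,b,T)$; the last one uses that $\phi$ sends $A$-compact kernels to $B$-compact ones. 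For item (4), note that $B\otimes I\pm i=(B\pm i)\otimes I$ has inverse $(B\pm i)^{-1}\otimes I$, which lies in $\maK_B(\maE\otimes\leftsub{A}{\maE}_B)$ because $(B\pm i)^{-1}\in\maK_A(\maE)$ and the canonical map $\maK_A(\maE)\otimes_{\phi} 1\to\maK_B(\maE\otimes\leftsub{A}{\maE}_B)$ lands in the compacts.

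Second, I would identify the signature representative of the tensored complex. Writing $B'=b\otimes I+b^{*}\otimes I=B\otimes I$ and $S'=i^{p(p-1)+l}(T\otimes I)=S\otimes I$, one obtains
\[
(B'+S')(B'-S')^{-1}=\bigl((B+S)\otimes I\bigr)\bigl((B-S)\otimes I\bigr)^{-1}=\bigl((B+S)(B-S)^{-1}\bigr)\otimes I,
\]
where invertibility of $(B-S)\otimes I$ follows from invertibility of $B-S$ (previous proposition) by tensoring with $I$.

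Third, I would invoke the explicit form of the Morita isomorphism. Under the canonical identifications $A\cong \maK_A(\maE)$ on the source and $\maK_B(\maE\otimes\leftsub{A}{\maE}_B)$ on the target, the map $\mathcal{M}:K_1(A)\to K_1(B)$ is precisely induced by $k\mapsto k\otimes I$ on (adjointable, and hence unitary-modulo-compacts) operators. Applied to the class $[(B+S)(B-S)^{-1}]\in K_1(\maK_A(\maE))=K_1(A)$, this yields
\[
\mathcal{M}[\sigma(\maE,b,T)]=\bigl[(B+S)(B-S)^{-1}\otimes I\bigr]=\sigma(\maE\otimes\leftsub{A}{\maE}_B,\, b\otimes I,\, T\otimes I),
\]
which is the desired equality.

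The main obstacle I expect is the first step: carefully checking that the interior tensor product of a regular operator with the identity on a Morita bimodule preserves regularity, the closed graph relations, composability of the differentials, and the $A$-compactness of the resolvent. These are standard but subtle facts about regular operators on Hilbert modules (compare \cite{Lance, Pal}); once they are in hand, the remaining two steps are essentially formal manipulations using the definition of $\mathcal{M}$ and the previously established invertibility of $B-S$.
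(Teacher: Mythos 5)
Your proposal is correct, and its core identity is the same one the paper relies on: the representative $(B+S)(B-S)^{-1}$ of the signature simply tensors with $I$, and the Morita isomorphism $\mathcal{M}$ is implemented by $k\mapsto k\otimes I$. The difference is in how that last fact is justified. The paper works in $KK(\C,A)$: it identifies $\sigma(\maE,b,T)$ with the cycle $(\maE_A,\lambda,U(D))$, realizes $\mathcal{M}$ as the Kasparov product with the Morita cycle $(\leftsub{A}{\maE}_B,\phi,0)$, and exploits the fact that this product is computable without connections (the second operator being zero) to get $(\maE_A\otimes\leftsub{A}{\maE}_B,\lambda\otimes I,U(D)\otimes I)$, finishing with $U(D)\otimes I=U(D\otimes I)$ by uniqueness of the functional calculus. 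You instead appeal directly to the explicit description of $\mathcal{M}$ on unitaries under the identifications $K_1(A)\cong K_1(\maK_A(\maE))$ and $K_1(B)\cong K_1(\maK_B(\maE\otimes\leftsub{A}{\maE}_B))$; this is more elementary but places the burden on knowing that concrete form of the Morita map, which is exactly what the Kasparov-product computation supplies. On the other hand, your first step --- verifying that $(\maE\otimes\leftsub{A}{\maE}_B,b\otimes I,T\otimes I)$ is genuinely an HP-complex over $B$, including regularity of $b\otimes I$, the identity $(b\otimes I)^*=b^*\otimes I$ (which uses that $\phi$ is unital here), and $B$-compactness of $(B\pm i)^{-1}\otimes I$ --- is something the paper asserts without proof, so your write-up is in that respect more complete; note only that the countable generation of $\leftsub{A}{\maE}_B$ and fullness of the modules are what let one pass from $K_1(\maK_A(\maE))$ to $K_1(A)$ in the first place, a point both you and the paper leave implicit.
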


\begin{proof}
Let $U(D)=(D+iI)(D-iI)^{-1}$ and $\maE_A:= \oplus_p \maE_p$ then $U(D)=(B+S)(B-S)^{-1}$. The class of $U(D)$ in $K_1(A)$ can be identified with the class of the $KK$-cycle in $KK(\mathbb{C},A)$ given by $(\maE_A, \lambda,U(D))$, where $\lambda$ is the scalar multiplication by complex numbers on the left. Then, $\mathcal{M}[\sigma(\maE,b,T)]$ can be identified with an element of $KK(\mathbb{C},B)$ which will be given by the Kasparov product of $(\maE_A, \lambda,U(D))$ with the Morita $KK$-cycle $(\leftsub{A}{\maE}_B,\phi,0)$.

But this Kasparov product is given by the $KK$-cycle $(\maE_A\otimes\leftsub{A}{\maE}_B,\lambda\otimes I,U(D)\otimes I)$ \cite{ConnesSkandalis, Kasparov}. Since $D$ is  self-adjoint regular operator, by the uniqueness of the functional calculus we have $U(D)\otimes I= U(D\otimes I)$. But then we can identify the class of $U(D\otimes I)$ in $K_1(B)$ with the cycle $(\maE_A\otimes\leftsub{A}{\maE}_B,\lambda\otimes I,U(D)\otimes I)$ in $KK(\mathbb{C},B)$. This finishes the proof.

\end{proof}

\subsection{Homotopy of HP-complexes}

We can now define the notion of homotopy equivalence of HP-complexes

\begin{definition}\label{homeqHPC}
A homotopy equivalence between two HP-complexes $(\maE,b,T)$ and $(\maE',b',T')$ is a chain map $A: (\maE,b)\rightarrow (\maE',b')$ which induces an isomorphism on cohomology and for which the maps $ATA^*$ and $T'$ between the complex $(\maE',b')$ and its dual $(\maE',b'^*)$ induce the same map on cohomology.
\end{definition}

\begin{definition}\
Let $(\maE,b)$ be a complex of Hilbert-modules. An \emph{operator homotopy} between Hilbert-Poincar\'{e} complexes $(\maE,b,T_1)$ and $(\maE,b,T_2)$ is a norm-continuous family of adjointable operators $T_s, s\in [0,1]$ such that each $(\maE,b,T_s)$ is a Hilbert-Poincar\'{e} complex.
\end{definition}

\begin{theorem}[\cite{HigsonRoe}, Theorem 4.3]\label{homsign}
If two odd-dimensional HP-complexes $(\maE,b,T)$ and $(\maE',b',T')$ are homotopy equivalent then their signatures are equal in $K_1(\mathcal{K}_A(\maE))$.
\end{theorem}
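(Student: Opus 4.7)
The plan is to adapt the Higson--Roe argument to the regular setting by splitting the theorem into two independent ingredients: \emph{operator--homotopy invariance} of the signature, and the construction of an \emph{algebraic mapping cylinder} of the chain map $A$ carrying two HP--structures whose signatures equal $\sigma(\maE,b,T)$ and $\sigma(\maE',b',T')$ respectively, and which are joined by an operator homotopy.

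First I would establish operator--homotopy invariance. Given a norm--continuous family $T_s$ of duality operators making each $(\maE,b,T_s)$ into an HP--complex, the associated symmetry $S_s$ of Definition \ref{symm} depends norm--continuously on $s$. By the proposition giving adjointable invertibility of $B\pm S$ together with condition (4) of Definition \ref{HPcomplex}, the Cayley--type transform $u_s=(B+S_s)(B-S_s)^{-1}$ is a norm--continuous path of invertibles in the unitisation of $\maK_A(\maE)$. Hence $s\mapsto[u_s]$ is a constant class in $K_1(\maK_A(\maE))$.

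Next I would build the algebraic mapping cylinder $Z(A)$ of $A$ in the usual way, i.e.\ the regular complex whose underlying graded Hilbert module is $\maE'_{\bullet}\oplus\maE_{\bullet-1}\oplus\maE'_{\bullet-1}$ with the standard block differential, and verify that the natural projections $Z(A)\to(\maE',b')$ and $Z(A)\to(\maE,b)$ are chain homotopy equivalences. Using $T$, $T'$, and the chain homotopy $c$ furnished by the definition of homotopy equivalence of HP--complexes (which makes $ATA^{*}$ and $T'$ cohomologous on $(\maE',b')$), I would construct two adjointable duality operators $T_{0},T_{1}$ on $Z(A)$ satisfying the axioms of Definition \ref{HPcomplex} and arranged so that, via block--triangular elimination of an acyclic summand in each case, the signature of $(Z(A),d,T_{0})$ equals $\sigma(\maE,b,T)$ and the signature of $(Z(A),d,T_{1})$ equals $\sigma(\maE',b',T')$. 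The linear interpolation $T_s:=(1-s)T_{0}+sT_{1}$ is then a norm--continuous path of adjointable operators on $Z(A)$; and because $T_{0}$ and $T_{1}$ have been engineered to induce the \emph{same} isomorphism on cohomology (this is precisely the content of Definition \ref{homeqHPC}), each intermediate $T_s$ still satisfies axiom (3). Combined with the first step this yields
\[
\sigma(\maE,b,T)=\sigma(Z(A),d,T_{0})=\sigma(Z(A),d,T_{1})=\sigma(\maE',b',T')
\]
in $K_{1}(\maK_A(\maE))$, after implementing the natural identification of compact operators induced by the chain homotopy equivalence.

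The main obstacle will be the regular--operator bookkeeping that is invisible in the bounded setting of \cite{HigsonRoe}. One must verify that the block--matrix differential on $Z(A)$ is closed and regular with regular adjoint, that successive components remain composable, that each $T_s$ maps $\Dom(b^{*})$ into $\Dom(b)$ and satisfies the anti--commutation relation of Definition \ref{HPcomplex} (2) on a common core along the whole interpolation, and that axiom (4) (compactness of the resolvent of $B=b+b^{*}$) is preserved, so that the Cayley transform remains a well defined invertible modulo $\maK_A$. These verifications rely directly on the regular--operator tools recalled at the beginning of Section \ref{HP}, notably the bounded transform $Q$ of Lemma \ref{Q(b)} and the criterion for Fredholmness via pseudo--inverses.
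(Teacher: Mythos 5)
Your first step (operator homotopy invariance) is the paper's Lemma \ref{lem1}, but your justification is too quick at exactly the point where the regular setting bites: since $B=b+b^*$ is unbounded, the norm continuity of $s\mapsto (B+S_s)(B-S_s)^{-1}$ does not follow merely from norm continuity of $s\mapsto S_s$; the paper establishes it via the resolvent identity for $(B+S_s+i\mu)^{-1}$ and a concatenation of three paths in the $(s,\mu)$ parameters. That is repairable. The second step, however, departs from the paper and contains genuine gaps. First, a degree problem: the mapping cylinder of a chain map between two $n$-dimensional complexes is an $(n+1)$-dimensional complex (your own formula $\maE'_{\bullet}\oplus\maE_{\bullet-1}\oplus\maE'_{\bullet-1}$ has a nonzero module in degree $n+1$), so an operator $T_i:Z(A)_p\to Z(A)_{n-p}$ cannot be defined on all of $Z(A)$ without first splitting off the extra summand. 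Second, the duality operators $T_0,T_1$ are never constructed; the natural candidates $T_0=i\,T\,p$ and $T_1=i'\,T'\,p'$ built from the inclusions and homotopy retractions fail the \emph{exact} symmetry axiom (1) of Definition \ref{HPcomplex}, because an inclusion and its retraction are not mutually adjoint for the direct-sum inner product on $Z(A)$ — axiom (1) is an identity of operators, not a statement up to chain homotopy. Third, the claim that ``block-triangular elimination of an acyclic summand'' preserves the signature is itself a lemma (stability of $\sigma$ under direct sum with an acyclic HP-complex, or under conjugation by a chain isomorphism) that appears nowhere in the paper and is not proved in your sketch.

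The paper avoids all of this by never leaving the direct sum: it shows $\sigma(\maE\oplus\maE',b\oplus b',T\oplus -T')=0$ by exhibiting an explicit operator homotopy, in three stages, from $T\oplus -T'$ to an operator which is operator homotopic to its own additive inverse, namely
$$
T\oplus\bigl((s-1)T'-sATA^*\bigr),\qquad
\left(\begin{array}{cc} \cos(s)T & \sin(s)TA^* \\ \sin(s)AT & -\cos(s)ATA^*\end{array}\right),\qquad
\left(\begin{array}{cc} 0 & e^{is}TA^* \\ e^{-is}AT & 0\end{array}\right),
$$
and then invokes Remark \ref{lem2} together with additivity of the signature under direct sums. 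Note that your key mechanism — a linear interpolation between two duality operators stays within the class of HP-structures because they induce the same map on cohomology — is precisely what makes the paper's \emph{first} path legitimate (it interpolates $-T'$ and $-ATA^*$ on $\maE'$, using Definition \ref{homeqHPC}). So the correct instinct is there, but it should be deployed on $\maE\oplus\maE'$ rather than on a mapping cylinder; as written, your route would require substantially more infrastructure than the theorem it is meant to prove.
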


\begin{proof}\
The proof  given in \cite{HigsonRoe} works word by word in this case. Namely, it is shown that the signature of the complex $(\maE\oplus \maE', b\oplus b', T\oplus -T')$ is zero. This is achieved by using the chain map $A$ in the definition of homotopy equivalence to construct an explicit path that connects the operator $T\oplus -T'$ to an operator which is in turn operator homotopic to its additive inverse. More precisely,  the operator path is given very briefly as follows:
\begin{itemize}
\item First, the operator path
$$
\left(\begin{array}{ccc} T & 0 \\
0 & (s-1)T'-sATA^* \end{array}\right), 0\leq s\leq 1
$$
connects the duality operators for the direct sum HP-complex $T\oplus -T'$ to $T\oplus -ATA^*$.
\item Next, the operator $T\oplus -ATA^*$ is connected to $\left(\begin{array}{ccc} 0 & TA^* \\
AT & 0 \end{array}\right)$ via the path
$$\left(\begin{array}{ccc} \cos(s) T & \sin(s)TA^* \\
\sin(s)AT & -\cos(s)ATA^* \end{array}\right), 0\leq s \leq \frac{\pi}{2}.
$$
\item  Finally, the operator $\left(\begin{array}{ccc} 0 & TA^* \\
AT & 0 \end{array}\right)$ is connected to its additive inverse using the path
$$
\left(\begin{array}{ccc} 0 & \exp{is}TA^* \\
\exp{-is}AT & 0 \end{array}\right), 0\leq s \leq \pi.
$$
\end{itemize}
Thus using Lemma \ref{lem2} proved below and the fact that $\sigma(\maE\oplus \maE', b\oplus b', T\oplus -T')= \sigma(\maE, b, T)-\sigma(\maE',b', T')$, the proof is complete.
\end{proof}

So, we need to prove the following

\begin{lemma}[\cite{HigsonRoe}, Lemma 4.5]\label{lem1}
Operator homotopic HP-complexes have the same signature.
\end{lemma}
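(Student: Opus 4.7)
The signature of $(\maE,b,T_s)$ is represented by the invertible operator
\[
U_s := (B+S_s)(B-S_s)^{-1}, \qquad S_s := i^{p(p-1)+l}\,T_s \text{ on }\maE_p,
\]
where $B=b+b^*$ is independent of $s$. Since the scalars in the definition of $S_s$ do not depend on $s$, the norm continuity of $s\mapsto T_s$ in $\maL_A(\maE)$ passes to $s\mapsto S_s$. The plan is to exhibit $s\mapsto U_s$ as a norm-continuous path of invertibles in $1+\maK_A(\maE)$; the invariance of $K_1$-classes under such paths will then force $[U_0]=[U_1]$ in $K_1(\maK_A(\maE))$, which is precisely the statement of the lemma.

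First I would verify that $U_s-1\in\maK_A(\maE)$ for each $s$. The algebraic identity
\[
U_s-1=\bigl[(B+S_s)-(B-S_s)\bigr](B-S_s)^{-1}=2S_s(B-S_s)^{-1}
\]
reduces this to showing $(B-S_s)^{-1}\in\maK_A(\maE)$. The preceding proposition, applied to the HP-complex $(\maE,b,T_s)$, gives that $B-S_s$ is adjointably invertible in $\maL_A(\maE)$. Condition~(4) of Definition~\ref{hpcomplexes} gives $(B-i)^{-1}\in\maK_A(\maE)$. Writing
\[
(B-S_s)^{-1}=(B-i)^{-1}(B-i)(B-S_s)^{-1}=(B-i)^{-1}\bigl[1+(S_s-i)(B-S_s)^{-1}\bigr]
\]
displays $(B-S_s)^{-1}$ as the product of a compact operator with a bounded adjointable one, so it is compact; consequently $U_s-1\in\maK_A(\maE)$.

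For the continuity of $s\mapsto(B-S_s)^{-1}$, I would use the resolvent identity
\[
(B-S_s)^{-1}-(B-S_t)^{-1}=(B-S_s)^{-1}(S_s-S_t)(B-S_t)^{-1},
\]
which is legitimate because both inverses take values in $\Dom(B)=\Dom(B-S_s)=\Dom(B-S_t)$. A Neumann-series estimate in a neighbourhood of any $s_0\in[0,1]$ gives local boundedness of $\|(B-S_s)^{-1}\|$ and then local Lipschitz continuity. Together with the norm continuity of $s\mapsto S_s$, this makes $s\mapsto U_s=1+2S_s(B-S_s)^{-1}$ a norm-continuous map $[0,1]\to 1+\maK_A(\maE)$ with values in the invertibles, so the standard locally-constant property of $K_1$ under such deformations yields $[U_0]=[U_1]$ in $K_1(\maK_A(\maE))$.

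The main obstacle is bookkeeping rather than conceptual: one must be careful that the algebraic manipulations above are legitimate at the level of domains and of regular operators on Hilbert $C^*$-modules. In particular, one has to check that $B-S_s$ is regular with $\Dom(B-S_s)=\Dom(B)$ (since $S_s$ is adjointable, hence a bounded perturbation), that the resolvent identity really does hold on all of $\maE$, and that the products defining $(B-S_s)^{-1}$ as a compact operator land in the correct domains. Each of these is a standard fact from \cite{Lance, Pal} on regular and adjointable operators, so once these technicalities are dispatched the argument becomes an almost verbatim translation of the proof in \cite{HigsonRoe} from the Hilbert-space case to the Hilbert-module case.
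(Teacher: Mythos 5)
Your proposal is correct and follows essentially the same route as the paper: both rest on the invertibility of $B\pm S_s$ for every $s$ (from the preceding proposition applied to each $(\maE,b,T_s)$), the resolvent identity $(B-S_s)^{-1}-(B-S_t)^{-1}=(B-S_s)^{-1}(S_s-S_t)(B-S_t)^{-1}$ to get norm continuity of the path of inverses, and homotopy invariance of $K_1$. The only cosmetic difference is that the paper deforms through $(B+S_s+i\mu)(B-S_s-i\mu)^{-1}$ and concatenates three paths (so that the uniform bound $\|(B+S_s+i)^{-1}\|\leq 1$ makes the continuity immediate), whereas you stay at $\mu=0$ and supply the needed local boundedness of $\|(B-S_s)^{-1}\|$ by a Neumann-series argument; both are legitimate.
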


\begin{proof}
We adapt the proof of \cite{HigsonRoe} to our setting.

Let $(\maE,b)$ be a complex of Hilbert-modules and $T_s, s\in [0,1]$ be a norm-continuous family of duality operators acting on $(\maE,b)$ and $S_s$ be the self-adjoint operators defined from $T_s$ as in definition of the operator $S$. First we note from Result 5.22 in \cite{Kustermans} that for a regular operator $ t$ the map $ \C \supseteq\rho(t)\ni \lambda\mapsto (t-\lambda )^{-1}$ is continuous.
Since $(B+ S)$ is an invertible self-adjoint regular operator, the path of operators $(B+S+i\mu)^{-1}$, is norm continuous for $\mu\in [0,1]$.
Now for a fixed $\mu \in \R$ and any $s_1,s_2\in \R$, the resolvent identity holds:
$$
(B+S_{s_1}+i\mu)^{-1}-(B+S_{s_2}+i\mu)^{-1} =  (B+S_{s_1}+i\mu)^{-1}(S_{s_2}-S_{s_1})(B+S_{s_2}+i\mu)^{-1}.
$$
One can use techniques in Theorem VI.5 of \cite{ReedSimon} to show that the above identity implies that $(B+S_s+i\mu)^{-1}$ is norm-continuous in $s\in [0,1]$.

Now, consider the norm continuous path $(B+S_0+is)(B+S_0+is)^{-1}$ that can be concatenated with the norm continuous path $(B+S_s+i)(B+S_s+i)^{-1}$
and then again concatenated with the norm continuous path $(B+S_1+(1-s)i)(B+S_1+(1-s)i)^{-1}$. This yields a a well defined norm continuous path joining
$(B+S_0)(B-S_0)^{-1}$ and $(B+S_1)(B-S_1)^{-1}$. Therefore, these operators lie in the same $K_1$-class.  We thank the referee for suggesting this concatenation of
paths in place of our first more complicated proof.
\end{proof}

\begin{remark}\label{lem2}
From the proof of the previous lemma, it is easy to check that if the duality operator $T$ is operator homotopic to $-T$ then the signature of the HP-complex is zero. See [\cite{HigsonRoe}, Lemma 4.6] for more details.
\end{remark}

\subsection{The leafwise de Rham HP-complex}

We are mainly interested in HP-complexes arising from the study of homotopy invariants constructed out of the signature operator on smooth foliations \cite{Chang, HilsumSkandalis, KaminkerMiller, CW, Weinberger, Mathai, PiazzaSchick, HeitschLazarov}, and we proceed now to explain this ''paradigm example''. Let then $(V,\mathcal{F})$ be an oriented smooth foliation on a closed Riemannian manifold $(V, g)$. The leafwise tangent space $T\maF$ is then endowed with a euclidean structure which allows to induce the complex Grassmann bundles $\Lambda^iT^*\maF$ with hermitian structures. Assume that the dimension of $V$ is $n$ and that the dimension of the leaves is $p$ and set $q=n-p$ for the codimension of the foliation. We restrict to odd dimensional foliations  as this is not as well understood as is the even dimensional situation, see for instance \cite{BenameurHeitschI, BenameurHeitschJDG, LawsonMichelson, MischenkoFomenko, Neumann} where higher signatures play a fundamental part in the even case. Denote by $\mathcal{G}$ the monodromy (we could as well use holonomy) groupoid of the foliation and let $\lambda = (\lambda_x)_{x\in V}$ be a right-invariant smooth Haar system on $\mathcal{G}$.  The space  $\cG^{(1)}$ of arrows is the space of homotopy classes of paths drawn in the leaves  of $(V,\maF)$ and we make as usual the convenient confusion between $\maG^{(1)}$ and $\maG$. So, two paths whose ranges are contained in a given leaf $L$ define the same class in $\cG$ if they start and end at the same points and if they are homotopic through paths drawn in the same leaf $L$ and with fixed end points. Notice that  concatenation of paths endows $\cG$ with the structure of a smooth groupoid, which is also  a foliated manifold. We denote as usual by $s:\cG\to V$ and $r:\cG\to V$ the source and range maps and we use the following standard notation. For subsets $X, Y$ of the manifold $V$, we set
 $$
\mathcal{G}_X:= s^{-1}(X), \mathcal{G}^Y:= r^{-1}(Y), \mathcal{G}^Y_X:= r^{-1}(Y)\cap s^{-1}(X).
$$
Notice that when $Y=X$, the subspace $\mathcal{G}^X_X$ is a subgroupoid of $\cG$. Let $X$ be a complete smooth transversal of the foliation. The subspace $\maG_X$  is  a smooth submanifold of $\maG$ which is foliated by the pull-back foliation $\maF_X$ under the range map $r:\maG_X\to V$. We set $\mathcal{E}^i_c:=C_c^\infty(\mathcal{G}_X,r^* \Lambda^i T^*\mathcal{F}_X)$ with the $\mathcal{A}_c^X:=C_c^\infty(\mathcal{G}_X^X)$ valued inner product given by the following formula:
\begin{equation}
\label{innprod} <\xi_1,\xi_2>(u)= \int_{v \in \mathcal{G}_{r(u)}}
<\xi_1(v),\xi_2(vu)> \; d\lambda_{r(u)}(v) \text{
for $\xi_1,\xi_2 \in \mathcal{E}^i_c, u \in \mathcal{G}_X^X$}
\end{equation}
The space $\mathcal{E}^i_c$ is a right $\mathcal{A}_c^X$-module and the  formula for this action is given by
\begin{equation}
\label{rtact} (\xi f)(\gamma)= \sum_{\gamma' \in
\mathcal{G}_{r(\gamma)}^X} f(\gamma' \gamma) \xi(\gamma'^{-1}) \text{ for $f \in \mathcal{A}_c^X , \xi \in \mathcal{E}^i_c, \gamma \in
\mathcal{G}_X$}
\end{equation}
A classical computation shows that $\maE^i_c$ is then a preHilbert module over the  pre-$C^*$-algebra $\mathcal{A}_c^X$. By taking the completion of $\mathcal{A}_c^X$ with respect to the maximal $C^*$-norm and then completing the above pre-Hilbert module we obtain a Hilbert $C^*(\mathcal{G}^X_X)$-module $\mathcal{E}_i$ for $0\leq i \leq p=\dim \mathcal{F}$.

Consider now the leafwise de Rham differential $d=(d_x)_{x\in V}$ on $(V,\mathcal{F})$ and for each $x\in V$ denote the $\mathcal{G}_x^x$- equivariant lift of $d_x$ to $\mathcal{G}_x$ by $\tilde{d}_x$. Let $\tilde{d}$ denote the family of operators $(\tilde{d}_x)_{x\in V}$ acting on $\mathcal{E}^i_c$. Then  $\tilde{d}^2=0$ and  we get the de Rham complex on $\mathcal{G}_X$:
 $$
  \mathcal{E}^0_c\xrightarrow{\tilde{d}} \mathcal{E}^1_c\xrightarrow{\tilde{d}} \cdots \xrightarrow{\tilde{d}} \mathcal{E}^p_c
  $$
  The operator ${\tilde d}$ is thus a densely defined (unbounded) operator from  $\maE^i$ to $\maE^{i+1}$ which obviously extends to a {\em {closed}}  operator that we denote by $d_X$. 
%

We also consider the leafwise Hodge $\star$ operator along the leaves of $(V,\mathcal{F})$ associated with the fixed orientation of $T\maF$, and denote its lift to $\mathcal{G}_X$ by
$$
T_X: C_c^\infty(\mathcal{G}_X,r^*(\Lambda^i T^*\mathcal{F}) \longrightarrow  C_c^\infty(\mathcal{G}_X,r^*(\Lambda^{p-i} T^*\mathcal{F})).
$$
\begin{proposition}
The complex $(\maE, d_X)$
$$
\mathcal{E}_0\xrightarrow{d_X}\mathcal{E}_1\xrightarrow{d_X} \mathcal{E}_2\cdots \xrightarrow{d_X} \mathcal{E}_p,
$$
together with the operator $T_X$ is an HP-complex over the $C^*$-algebra $C^*(\maG_X^X)$.
\end{proposition}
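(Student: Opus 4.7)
The proof amounts to checking the four axioms of Definition \ref{HPcomplex} one by one. First the basic setup: each $\maE_i$ is countably generated because $C_c^\infty(\maG_X, r^*\Lambda^i T^*\maF_X)$ admits a countable dense subset (the manifold $\maG_X$ is second countable), and the inner product \eqref{innprod} extends continuously to the completion to give a Hilbert $C^*(\maG^X_X)$-module. The compactly supported lifted leafwise differential $\widetilde{d}$ is densely defined and satisfies $\widetilde{d}^2=0$; the relation $d^* = \pm {\star}\, d\, {\star}$ on each leaf lifts $\maG^x_x$-equivariantly, so the formal adjoint of $\widetilde d$ on smooth compactly supported sections is realized by the lifted leafwise codifferential, giving a densely defined adjoint. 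Closability and the regularity of $d_X$ and $d_X^*$ follow from the fact that, along each $s$-fibre of $\maG_X$, the operator is first order elliptic in the $\maG$-equivariant pseudodifferential calculus on the groupoid $\maG$ restricted to $X$; the ${d}_X^2 = 0$ and composability on the dense core $\maE^i_c$ then pass to the closure.

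Since the Hodge $\star$-operator is fibrewise unitary (up to sign) on the hermitian exterior bundle, its $\maG^x_x$-equivariant lift $T_X$ is an adjointable isometry, hence belongs to $\maL_{C^*(\maG^X_X)}(\maE_i, \maE_{p-i})$. Properties (1) and (2) of Definition \ref{HPcomplex} are then \emph{pointwise} identities for the Hodge $\star$ along the leaves, namely $\star\star = (-1)^{i(p-i)}$ on $i$-forms and $\star d = \pm d^* \star$, and they transfer verbatim to the lifted objects on $\maG_X$ since the lift is fibrewise isometric and commutes with exterior derivatives. In particular, because $\dim \maF = p$ is odd, one checks $T_X^2 = I$, so by the remark following Definition \ref{HPcomplex} property (2) implies property (3): $T_X$ already intertwines the complex $(\maE,d_X)$ with its dual $(\maE,d_X^*)$ as chain maps, hence induces an isomorphism on cohomology automatically.

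The serious point is property (4). Set $B_X := d_X + d_X^*$, densely defined on $\bigoplus_i \maE^i_c$. Fibrewise on $\maG_x$ the operator $B_X$ is a first-order formally self-adjoint $\maG^x_x$-invariant elliptic operator (the lifted leafwise signature operator of $(V,\maF)$), and it belongs to the $C^*$-closure of the pseudodifferential calculus on the Lie groupoid $\maG^X_X$ acting on $\maG_X$. By the general theory of regular operators from such equivariant elliptic calculi (see, e.g., the pseudodifferential machinery on Lie groupoids developed by Connes, Monthubert--Pierrot, Nistor--Weinstein--Xu and Vassout), $B_X$ is essentially self-adjoint and regular in the Hilbert module sense, and the resolvents $(B_X\pm i)^{-1}$ belong to $\maK_{C^*(\maG^X_X)}(\maE)$. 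This gives the Fredholm statement and the compactness of $(B\pm i)^{-1}$ in (4). The main obstacle is precisely this last step: it is the place where one cannot stay within elementary Hilbert module manipulations but has to invoke the groupoid pseudodifferential calculus and its continuity with respect to the chosen (maximal) $C^*$-norm on $C^*(\maG^X_X)$. Once (4) is in hand, assembling (1)--(4) yields the proposition.
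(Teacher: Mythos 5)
Your proposal is correct and follows essentially the same route as the paper: verify (1) and (2) from the pointwise Hodge-$\star$ identities lifted to $\maG_X$, deduce (3) from (2) together with $T_X^2=\pm 1$ (the paper's Remark after Definition \ref{HPcomplex}, spelled out there via the explicit injectivity/surjectivity argument), and obtain (4) from the ellipticity of $\tilde d+\tilde\delta$ in the groupoid pseudodifferential calculus of Connes and Vassout, with compactness of the resolvents coming from negative order. The only cosmetic difference is that the paper isolates the regularity of $d_X$ and $\delta_X$ in a separate proposition proved via surjectivity of $1+d_X\delta_X$ using the leafwise Laplacian, whereas you fold it into the elliptic calculus; both are standard and equivalent here.
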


\begin{proof}
Since $T_X$ is the lift of the leafwise Hodge $\star$ operator, we have on smooth compactly supported forms:
$$
\left<T_X \omega_1, T_X\omega_2\right> = \left<\omega_1,\omega_2\right>\text{   and } T_X(T_X \omega)= (-1)^{k(n-k)} \omega\quad \text{ for }\omega \in \mathcal{E}_c^k.
$$
Hence we get $T_X^*T_X=1$ and $T^*_X = (-1)^{k(n-k)} T_X$ on $\mathcal{E}^k_c$. Therefore $T_X$ extends to an adjointable operator on $\mathcal{E}_k$ which satisfies (1) of Definition $\ref{hpcomplexes}$. 
The adjoint of $\tilde{d}$ is easily seen to be the operator $\tilde{\delta}: \mathcal{E}^{i}_c\rightarrow \mathcal{E}_c^{i-1}$ given by the formula
$$
\tilde{\delta} := (-1)^{p(i+1)+1} T_X \tilde{d} T_X.
$$
Then $\tilde{\delta}$ extends to a closed densely defined $C^*(\maG_X^X)$-linear operator $\delta_X: \mathcal{E}_{\bullet} \rightarrow \mathcal{E}_{\bullet-1}$. We then have  $T_X\circ {\delta}_X=(-1)^k d_X\circ T_X$ on smooth $k$-forms, and hence as closed operators on the maximal completions. This shows condition (2) of Definition $\ref{hpcomplexes}$.

To see that the third condition is verified, we first note that due to condition (2) and Remark \ref{S2=1}, the map $T_X$ takes $\Im(\delta_X)$ to $\Im(d_X)$ and therefore the induced map
$(T_X)_*: H^*(\maE,b)\rightarrow H^*(\maE,b^*)$ is well-defined.

Let $z\in \mathcal{E}_k$ be $d_X$-closed and such that $[T_X z] =0 \in H^{n-k}(\maE,\delta_X)$. Then there exists a $y \in Dom (\delta_X)\subset \mathcal{E}_{n-k+1}$ such that $T_X z=  \delta_X y$ and we have
$$
z= \pm T_X(  \delta_X y) = d_X (\pm T_X y).
$$
Thus $z\in \Im(b)$. Therefore the induced map $(T_X)_*$ is injective. Surjectivity of $(T_X)_*$ follows from the relations  $T_X\circ {\delta}_X=(-1)^k d_X\circ T_X$ and
$T_X^2=\pm 1$. Hence $(T_X)_*$ is an isomorphism.

Finally, to check condition (4) in Definition $\ref{hpcomplexes}$ we remark that $\tilde{d}+ \tilde{\delta}$ is an elliptic $\maG$-operator and therefore extends to a regular Fredholm operator on the Hilbert module, and the extension of $\tilde{d}+\tilde{\delta}$ coincides with $d_X+\delta_X$ (cf. \cite{Vassout, VassoutThesis}). Moreover,
since $(\tilde{d}+\tilde{\delta}\pm i)^{-1}$ is a pseudo-differential $\mathcal{G}$-operator of negative order, its extension to the Hilbert module
is a compact operator \cite{ConnesIntegration}. This extension coincides again with $(d_X+\delta_X\pm i)^{-1}$.
%
%
%
\end{proof}

\begin{proposition}
The closed unbounded operators $d_X$  and $\delta_X$ are regular operators.
\end{proposition}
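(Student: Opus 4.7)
The plan is to leverage the fact, established in the previous proposition, that $B = d_X + \delta_X$ is a regular self-adjoint Fredholm operator on $\maE = \bigoplus_i \maE_i$, and to deduce regularity of each of $d_X$ and $\delta_X$ via a purely algebraic factorization of $1+B^2$. Recall that for a closed densely defined operator $t$, regularity amounts to its adjoint $t^*$ being densely defined together with $1+t^* t$ having dense range; since $d_X$ and $\delta_X$ are closed and densely defined by construction as closures of $\tilde d$ and $\tilde\delta$, the task reduces to verifying these two points.

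First, since $\tilde\delta$ is defined as the formal adjoint of $\tilde d$ on the core $\maE^i_c$, integration by parts on $\maG_X$ gives $\langle \tilde d \xi,\eta\rangle=\langle \xi,\tilde\delta\eta\rangle$ for all smooth compactly supported $\xi,\eta$, and passing to closures yields the inclusions $\delta_X\subseteq d_X^*$ and dually $d_X\subseteq \delta_X^*$; in particular both $d_X^*$ and $\delta_X^*$ are densely defined. Next, on each $\maE^k_c$, the identities $\tilde d^2 = \tilde\delta^2 = 0$ give the purely algebraic factorization
\[
(1 + \tilde d\tilde\delta)(1 + \tilde\delta\tilde d) = 1 + \tilde d\tilde\delta + \tilde\delta\tilde d = 1 + B^2,
\]
and symmetrically with the factors reversed. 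Because $B$ is regular self-adjoint, $(1+B^2)^{-1}=(B+i)^{-1}(B-i)^{-1}$ is bounded adjointable, and $\bigoplus_i \maE^i_c$ is a core for $B^2$; so given any $\eta\in\maE_k$ one can find $\xi_n\in\maE^k_c$ with $(1+B^2)\xi_n\to\eta$. Setting $\mu_n:=(1+\tilde d\tilde\delta)\xi_n\in\maE^k_c$, the factorization yields $(1+\tilde\delta\tilde d)\mu_n = (1+B^2)\xi_n \to\eta$, and since $\mu_n\in \Dom(\delta_X d_X)\subseteq\Dom(d_X^*d_X)$ (using $\delta_X\subseteq d_X^*$), the range of $1+d_X^*d_X$ is dense in $\maE_k$. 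This proves $d_X$ is regular. For $\delta_X$, the symmetric argument using the reversed factorization applies; alternatively, one uses the conjugation $\delta_X = (-1)^\bullet T_X d_X T_X$ with $T_X$ adjointable and $T_X^* T_X = 1$ to transfer regularity from $d_X$.

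The main technical obstacle is justifying that smooth compactly supported forms form a core for $B^2$ restricted to each $\maE_k$. This is precisely where the ellipticity of the leafwise Hodge Laplacian $\Delta_k = d_X \delta_X + \delta_X d_X$ enters, via the pseudodifferential $\maG$-operator machinery of \cite{Vassout, VassoutThesis, ConnesIntegration} already invoked in the previous proposition to identify the closure of $\tilde d+\tilde\delta$ with the regular Fredholm operator $B$. Once this core property is granted, the remainder of the argument is a direct manipulation of the factorization.
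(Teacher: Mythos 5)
Your proof is correct and follows essentially the same route as the paper: both rest on the factorization $(1+\tilde d\tilde\delta)(1+\tilde\delta\tilde d)=1+\Delta$ together with the regularity of the leafwise Laplacian/signature operator supplied by the groupoid pseudodifferential calculus of Vassout. The only (harmless) variation is that you extract dense range of $1+d_X^*d_X$ by an approximation argument from the core property of $\maE^k_c$ for $B^2$, whereas the paper uses the containment $\Dom(\Delta_X)\subseteq\Dom(d_X\delta_X)\cap\Dom(\delta_Xd_X)$ to get exact surjectivity of $1+d_X\delta_X$; both hinge on the same analytic input, which you correctly identify as the remaining point to be justified.
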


\begin{proof}
This is a classical result that we sketch for completeness, see for instance \cite{HilsumSkandalisI}.
Let $\Delta= \tilde{d}\tilde{\delta}+\tilde{\delta}\tilde{d}$ on $\mathcal{E}^k_c$. Then we have on
$\mathcal{E}^k_c$:
$$
(1+\tilde{d}\tilde{\delta})(1+\tilde{\delta}\tilde{d})=(1+\tilde{d}\tilde{\delta}+\tilde{\delta}\tilde{d})=(1+\Delta).
$$
Moreover, the leafwise elliptic operator $\Delta$ extends to a regular operator $\Delta_X$ on the corresponding $2$-Sobolev space
$Dom(\Delta_X)$ on which $(1+\Delta_X)$ is surjective, see for instance \cite{Vassout}. Since the Sobolev space $Dom(\Delta_X)$ is contained in
$Dom(d_X\delta_X)\cap Dom(\delta_Xd_X)$, we deduce that for any element $z\in Dom(\Delta_X)$, the element $z+\delta_Xd_Xz$ makes sense and
 belongs to $Dom(d_X\delta_X)$ since $\Im (\delta_X)\subset \Ker(\delta_X)$. So, we deduce that for any $t\in \maE$, there exists $z\in Dom(\Delta_X)$ such that
$z+\Delta_X z=t$ and considering $u=z+\delta_Xd_Xz$, we deduce that  $u$ belongs to $Dom(d_X\delta_X)$ and satisfies $u+d_X\delta_X u = t$. This shows that
$1+d_X\delta_X$ is surjective and hence that $d_X$ is regular. The similar proof works for $\delta_X$.

\end{proof}

\section{Hilbert modules and leafwise homotopy equivalence}\label{MoritaModules}

We review in this section some classical properties of Hilbert modules  associated with leafwise maps that will be used in the subsequent sections. We fix
 two smooth foliations $(V,\maF)$ and $(V', \maF')$ together with a leafwise map $f:(V,\mathcal{F})\rightarrow (V',\mathcal{F}')$. So $f$ is a smooth map
which sends leaves to leaves. Denote by $\mathcal{G}$ and $\mathcal{G}'$ the monodromy  groupoids of $(V,\mathcal{F})$ and  $(V',\mathcal{F}')$,
respectively.
The leafwise map $f$ naturally induces a well-defined map still denoted $f:\mathcal{G}\rightarrow \mathcal{G}'$ which is clearly a groupoid morphism.
In the sequel and for simplicity, we will use the same notation $r$ and $s$ for the range and the source maps on the groupoids $\mathcal{G}$ and
$\mathcal{G}'$.
We are only interested in leafwise homotopy equivalences, we shall  therefore make the following simplifying assumption\\

\begin{assumption}\label{Etale}
  For any leaf $L'$ of $(V', \maF')$ and any transverse submanifold $X$ to $(V, \maF)$, the intersection $f^{-1} (L')\cap X$ is (at most) a countable subset
of $X$.
\end{assumption}

Notice that Assumption \ref{Etale} is satisfied when $f$ satisfies that $f^{-1} (L')$ is a finite union of leaves of $(V, \maF)$, for any given leaf $L'$
 of $(V', \maF')$. For a leafwise homotopy equivalence, this inverse image is a single leaf. In the whole present section,
leafwise map means smooth leafwise map satisfying Assumption \ref{Etale}.

\subsection{The reduced Hilbert bimodule of a leafwise map}

 We now introduce the reduced graph $(\maG_{W'}^W(f), r_f, s_f)$ associated with the subspaces $W$ and $W'$ of $V$ and $V'$ respectively, by setting

\hspace{0.25in}
\begin{picture}(415,80)

\put(155,60){$ \mathcal{G}^W_{W'}(f):= \{(w,\gamma') \in W\times \mathcal{G}'_{W'}, f(w)= r(\gamma')\} $}
\put(210,50){ $\vector(-4,-3){30}$}
\put(180,40){$r_f$}

\put(170,15){$W$}

\put(285,15){$W'$}
\put(245,50){ $\vector(4,-3){30}$}
\put(280,40){$ s_f$}
\end{picture}

where $ r_f(w,\gamma')= w\text{ and } s_f(w,\gamma')=s(\gamma')$.
Let $X$ (resp. $X'$) be a complete smooth transversal in $(V,\mathcal{F})$ (resp. in $(V',\mathcal{F}')$). We shall be mainly interested in the case $W=X$ and $W'=X'$ and in the reduced graph $\maG_{X'}^X(f)$. The groupoid ${\mathcal{G}'}_{X'}^{X'}$ acts on the right on $\mathcal{G}^X_{X'}(f)$ as follows. If $\alpha'\in {\mathcal{G}'}^{X'}_{X'}$ is such that $r(\alpha')=s_f(x, \gamma')$, then $(x,\gamma')\alpha' := (x,\gamma'\alpha')$. Note that $r(\gamma'\alpha')=f(x)$, so this action is well-defined. It is easy to see that since ${\mathcal{G}'}_{X'}^{X'}$ acts properly and freely on $\mathcal{G}'_{X'}$, it also acts properly and freely  $\mathcal{G}^X_{X'}(f)$.
%

The space $C_c(\mathcal{G}_{X'}^X(f))$, of compactly supported continuous complex valued functions on $\mathcal{G}_{X'}^X(f)$, is thus endowed with the structure of a right $C_c(\mathcal{G}'^{X'}_{X'})$-module. For $\xi \in C_c(\mathcal{G}_{X'}^X(f))$ and $\phi'\in C_c(\mathcal{G}'^{X'}_{X'})$ the module structure is defined by the formula
$$
(\xi\phi')(x,\gamma')=  \sum_{\alpha' \in \mathcal{G}'^{X'}_{s(\gamma')}} \xi(x, \gamma'\alpha'^{-1}) \;\; \phi'(\alpha').
$$
On the other hand, the groupoid $\mathcal{G}_X^X$ acts on the left on $\mathcal{G}_{X'}^X(f)$ through the formula
$$
\alpha (x,\gamma')= (r(\alpha),{f}(\alpha)\gamma') \quad\text{ for } (x,\gamma')\in \mathcal{G}_{X'}^X(f)\text{ and } \alpha\in \mathcal{G}_X^X.
$$
The left action of $\mathcal{G}_X^X$ on $\mathcal{G}_{X'}^X(f)$ induces a representation $\pi_f$ of the algebra  $C_c(\mathcal{G}_X^X)$  on the $C_c(\mathcal{G}_{X'}^{X'})$-module $C_c(\mathcal{G}_{X'}^X(f))$ given for $\phi\in  C_c(\mathcal{G}_X^X)$ and $\xi \in C_c(\mathcal{G}_{X'}^X(f))$ by the formula
$$
\pi_f(\phi)\xi(x,\gamma') = \sum_{\alpha\in \mathcal{G}_X^{x}} \phi(\alpha)\; \; \xi(s(\alpha),{f}(\alpha^{-1}) \gamma')
$$
We define the $C_c(\mathcal{G}'^{X'}_{X'})$-valued inner product  by the formula:
$$
<\xi,\eta>(\gamma') = \sum_{\gamma'_1\in \mathcal{G}'^{f(X)}_{r(\gamma')}}\;\; \sum_{\{x\in X, f(x)=r(\gamma'_1)\}} \overline{\xi(x,\gamma'_1)} \eta(x,\gamma'_1 \gamma'), \quad \text{ for any }\xi, \eta \in C_c(\mathcal{G}^X_{X'}(f)), \gamma'\in {\maG'}_{X'}^{X'}.
$$
Since $X$ is a transversal and by \ref{Etale}, the space $\{x\in X, f(x)=r(\gamma'_1)\}$ is a countable subset of the leaf $f^{-1} (L')$, where $L'$ is the leaf which contains (the representatives of) $\gamma'$. It is then easy to check, with obvious notations, that
$$
<\xi, \eta\phi'>=<\xi,\eta> \phi', <\xi,\eta>^\ast=<\eta,\xi> \text{ and } <\xi,\xi>\; \geq 0 \text{ in } C^\ast(\mathcal{G}'^{X'}_{X'}).
$$
%
%
Now, completing $C_c(\mathcal{G}^X_{X'}(f))$ with respect to the maximal $C^*$-algebra norm on $C_c(\mathcal{G}'^{X'}_{X'})$, we end up with a Hilbert $C^*$-bimodule over the maximal $C^*$-algebras $C^*(\mathcal{G}^{X}_{X})$ and  $C^*({\mathcal{G}'}^{X'}_{X'})$, that we denote  by $\mathcal{E}^X_{X'}(f)$.

\begin{remark}
The choice of maximal completion is dictated to us by the construction of  measured determinants and  rho invariants in Part II of this series of papers \cite{BenameurRoyII}. Similar results hold with other completions.
\end{remark}

Now, let $(V,X,\mathcal{F})$, $(V',X',\mathcal{F}')$ and $(V'',X'',\mathcal{F}'')$ be foliated manifolds with complete trasversals $X$, $X'$ and $X''$, respectively. Let
$$
(V,\mathcal{F}) \stackrel{f}{\rightarrow} (V',\mathcal{F}')\stackrel{g}{\rightarrow} (V'',\mathcal{F}''),
$$
be leafwise maps.  We define the space $\mathcal{G}_{X'}^X(f) \times_{\mathcal{G}'^{X'}_{X'}} \mathcal{G}_{X''}^{X'}(g)$ as the fibered product defined as the quotient of
$$
 \{((x,\gamma');(x',\gamma'')) \in \mathcal{G}_{X'}^X(f)\times \mathcal{G}_{X''}^{X'}(g),  x'=s(\gamma')\},
$$
under the equivalence relation
$$
((x,\gamma');(x',\gamma''))\sim((x,\gamma')\alpha';\alpha'^{-1}(x',\gamma'')), \text{ for } \alpha' \in \mathcal{G}'^{X'}_{X'}), r(\alpha')=s(\gamma')=x'.
$$
The equivalence class of $((x,\gamma');(x',\gamma''))$ is denoted $[(x,\gamma');(x',\gamma'')]$.

\begin{proposition}\label{composition}\
\begin{enumerate}
 \item The space $\mathcal{G}_{X'}^X(f)\times_{\mathcal{G}'^{X'}_{X'}} \mathcal{G}_{X''}^{X'}(g)$ is a smooth manifold which is diffeomorphic to $\mathcal{G}_{X''}^X(g\circ f)$.
\item The map $C_c(\maG_{X'}^X(f))\otimes_{C_c(\mathcal{G}'^{X'}_{X'})} C_c(\maG^{X'}_{X''}(g)\rightarrow  C_c(\maG^{X}_{X''}(g\circ f))$ which assigns to $\xi_f\otimes \eta_g$  for $\xi_f  \in C_c(\mathcal{G}_{X'}^X(f))$ and $\eta_g\in C_c(\mathcal{G}^{X'}_{X''}(g))$  the function $\xi_f \ast \eta_g$ given by
$$
\xi_f\ast \eta_g (x, \alpha'') := \sum_{\alpha'_1\in \cG_{X'}^{f(x)}} \xi_f (x, \alpha'_1) \eta_g (s(\alpha'_1), g({\alpha'}_1^{-1})\alpha''), \text{ for }(x, \alpha'')\in \mathcal{G}^{X}_{X''}(g\circ f).
$$
is well defined.
\end{enumerate}
\end{proposition}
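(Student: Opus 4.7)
The plan is to realize both parts of the proposition from a single bijection: the map that concatenates $g(\gamma')$ with $\gamma''$ identifies the fibered product with $\mathcal{G}_{X''}^X(g\circ f)$, and the convolution formula in (2) is the pointwise pullback of kernel multiplication along this identification. The balance of the tensor product in (2) then follows automatically from the fact that the pullback descends through the quotient.

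For part (1), consider the map
$$
\Phi: \bigl[(x,\gamma');(x',\gamma'')\bigr] \;\longmapsto\; (x,\, g(\gamma')\gamma'').
$$
The composition $g(\gamma')\gamma''$ is defined since $s(g(\gamma')) = g(s(\gamma')) = g(x') = r(\gamma'')$, and $(x, g(\gamma')\gamma'')$ lies in $\mathcal{G}_{X''}^X(g\circ f)$ because $r(g(\gamma')\gamma'') = g(f(x))$ and $s(g(\gamma')\gamma'') = s(\gamma'')\in X''$. Invariance under the relation $((x,\gamma');(x',\gamma''))\sim ((x,\gamma'\alpha');(s(\alpha'),g((\alpha')^{-1})\gamma''))$ is immediate from $g(\gamma'\alpha')\,g((\alpha')^{-1}) = g(\gamma')$. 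To build the inverse, given $(x,\beta'')\in \mathcal{G}_{X''}^X(g\circ f)$ use completeness of the transversal $X'$ to pick $\gamma'\in \mathcal{G}'$ with $r(\gamma') = f(x)$ and $s(\gamma')\in X'$, and assign the class of $((x,\gamma');(s(\gamma'),g(\gamma')^{-1}\beta''))$; independence from the choice of $\gamma'$ is exactly the equivalence relation, since any two such choices differ by right multiplication by an element of ${\mathcal{G}'}^{X'}_{X'}$. The fibered product inherits a smooth manifold structure from the smooth, free and proper action of ${\mathcal{G}'}^{X'}_{X'}$—properness being transferred from its action on $\mathcal{G}'_{X'}$ already recorded in the paper—and $\Phi$ is a diffeomorphism because $g$, the groupoid multiplication in $\mathcal{G}''$, and local sections $x\mapsto \gamma'$ into $\mathcal{G}'$ are all smooth.

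For part (2), three things need to be checked: (a) the sum defining $\xi_f * \eta_g(x,\alpha'')$ is finite at every $(x,\alpha'')$; (b) $\xi_f * \eta_g$ has compact support on $\mathcal{G}_{X''}^X(g\circ f)$; and (c) the assignment factors through the balanced tensor product. For (a), compactness of the support of $\xi_f$, transversality of $X$ in $(V,\maF)$, and Assumption \ref{Etale} together force the set of $\alpha'_1\in {\mathcal{G}'}^{f(x)}_{X'}$ contributing to the sum to be finite. For (b), one transports the product of the two compactly supported kernels through the diffeomorphism $\Phi$ of part (1) and applies a routine proper-support argument. For (c), one must show
$$
(\xi_f \phi') * \eta_g \;=\; \xi_f * (\pi_g(\phi')\,\eta_g), \qquad \phi' \in C_c\bigl({\mathcal{G}'}^{X'}_{X'}\bigr),
$$
where $\pi_g$ is the left representation of $C_c({\mathcal{G}'}^{X'}_{X'})$ on $C_c(\mathcal{G}_{X''}^{X'}(g))$ defined in perfect analogy with $\pi_f$. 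Expanding the left-hand side and performing the change of variable $\tilde{\alpha}' := \alpha'_1 (\beta')^{-1}$, and using the groupoid morphism identity $g((\alpha'_1)^{-1}) = g((\beta')^{-1})\, g((\tilde{\alpha}')^{-1})$, the two triple sums are seen to coincide. I expect this last change-of-variable bookkeeping in (c) to be the main, though still routine, algebraic step.
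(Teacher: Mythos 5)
Your proposal is correct and follows essentially the same route as the paper: the same concatenation map $[(x,\gamma');(x',\gamma'')]\mapsto (x,g(\gamma')\gamma'')$ for part (1) (the paper proves injectivity directly and only remarks that surjectivity uses completeness of $X'$, where you exhibit the inverse explicitly), and the same balancedness identity $(\xi_f\phi')\ast\eta_g=\xi_f\ast(\pi_g(\phi')\eta_g)$ verified by the change of variables $\alpha_1'=\tilde\alpha'\beta'$ for part (2).
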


\begin{proof}
\begin{enumerate}
\item We define a map $\chi: \mathcal{G}_{X'}^X(f)\times_{\mathcal{G}'^{X'}_{X'}} \mathcal{G}_{X''}^{X'}(g)\rightarrow \mathcal{G}_{X''}^X(g\circ f)$ in the following way:
$$
\chi([(x,\gamma');(x',\gamma'')])=(x,{g}(\gamma')\gamma'')
$$
We note that $g\circ f(x)= g(r(\gamma'))=r({g}(\gamma'))$, so $(x,g(\gamma')\gamma'')\in \mathcal{G}_{X''}^X(g\circ f)$. It is easy to see that this map is well defined and smooth since the map $\chi_0$ given by $\chi_0((x,\gamma');(x',\gamma''))=(x,{g}(\gamma')\gamma'')$ is clearly smooth. The relation
$$
\chi([(x_1,\gamma'_1);(x'_1,\gamma''_1)])= \chi([(x_2,\gamma'_2);(x'_2,\gamma''_2)]),
$$
implies that
$$
x_1=x_2 \text{ and } {g}(\gamma'_1)\gamma''_1={g}(\gamma'_2)\gamma''_2
$$
Now setting $\alpha'= \gamma'^{-1}_2 \gamma'_1 \in \mathcal{G}'^{X'}_{X'}$, we  get
$$
((x_2,\gamma'_2)\alpha';\alpha'^{-1}(x'_2,\gamma''_2)) = ((x_1,\gamma'_1);(x'_1,\gamma''_1)).
$$
So $\chi$ is injective. Surjectivity is also clear and uses that $X'$ is a complete transversal. The rest of the proof of the first item is also clear.

\item If we use the identification $\chi$ defined in the first item, then the formula for $\xi_f\ast \eta_g$ becomes
$$
\xi_f\ast \eta_g [(x,\gamma');(x',\gamma'')]:= \sum_{\alpha'\in \mathcal{G}'^{s(\gamma')}_{X'}} \xi_f(x,\gamma'\alpha')\eta_g(s(\alpha'),{g}(\alpha'^{-1})\gamma'').
$$
A direct inspection shows that $\xi_f\ast \eta_g$ is well defined on $\mathcal{G}^{X}_{X''}(g\circ f)$, is compactly supported and is continuous. Moreover, for $\phi'\in C_c(\mathcal{G}'^{X'}_{X'})$, we compute:
\begin{eqnarray}\label{lhs5}
\xi_f\phi'\ast \eta_g[(x,\gamma');(x',\gamma'')]&=&\sum_{\alpha'\in \mathcal{G}'^{s(\gamma')}_{X'}} (\xi_f\phi')(x,\gamma'\alpha')\eta_g(\alpha'^{-1}x',{g}(\alpha'^{-1})\gamma'')\nonumber\\
&=& \sum_{\alpha'\in \mathcal{G}'^{s(\gamma')}_{X'}} \sum_{\alpha'_1\in \mathcal{G}'^{X'}_{s(\alpha')}}(\xi_f)(x,\gamma'\alpha'\alpha'^{-1}_1)\phi'(\alpha'_1) \eta_g(\alpha'^{-1}x',{g}(\alpha'^{-1})\gamma'')
\end{eqnarray}
On the other hand, we also have:
\begin{eqnarray}\label{rhs5}
\xi_f\ast \pi_g(\phi')\eta_g[(x,\gamma');(x',\gamma'')]&=&\sum_{\beta'\in \mathcal{G}'^{s(\gamma')}_{X'}} \xi_f(x,\gamma'\beta')[\pi_g(\phi')\eta_g](\beta'^{-1}x',{g}(\beta'^{-1})\gamma'')\nonumber\\
&=& \sum_{\beta'\in \mathcal{G}'^{s(\gamma')}_{X'}} \xi_f(x,\gamma'\beta')[\pi_g(\phi')\eta_g](s(\beta'),{g}(\beta'^{-1})\gamma'')\nonumber\\
&=&\sum_{\beta'\in \mathcal{G}'^{s(\gamma')}_{X'}} \xi_f(x,\gamma'\beta') \sum_{\beta'_1\in  \mathcal{G}'^{s(\beta')}_{X'}}\eta_g(s(\beta'_1),{g}(\beta_1'^{-1}){g}(\beta'^{-1})\gamma'')\phi'(\beta'_1)\nonumber\\
&=&\sum_{\beta'\in \mathcal{G}'^{s(\gamma')}_{X'}} \xi_f(x,\gamma'\beta') \sum_{\beta'_2\in  \mathcal{G}'^{r(\beta')}_{X'}}\eta_g(s(\beta'_2),{g}(\beta_2'^{-1})\gamma'')\phi'(\beta'^{-1}\beta'_2)\nonumber\\
&=&\sum_{\beta'_2\in  \mathcal{G}'^{s(\gamma')}_{X'}}\sum_{\beta'\in \mathcal{G}'^{s(\gamma')}_{X'}} \xi_f(x,\gamma'\beta') \eta_g(s(\beta'_2),{g}(\beta_2'^{-1})\gamma'')\phi'(\beta'^{-1}\beta'_2)\nonumber\\
&=&\sum_{\beta'_2\in  \mathcal{G}'^{s(\gamma')}_{X'}}\sum_{\beta'_3\in \mathcal{G}'^{X'}_{s(\beta'_2)}} \xi_f(x,\gamma'\beta'_2\beta'^{-1}_3) \eta_g(s(\beta'_2),{g}(\beta_2'^{-1})\gamma'')\phi'(\beta'_3)
\end{eqnarray}
Comparing $\eqref{lhs5}$ and $\eqref{rhs5}$ gives $\xi_f\phi'\ast \eta_g =\xi_f\ast \pi_g(\phi')\eta_g.$

\end{enumerate}
\end{proof}
\begin{remark}
 We shall show in Proposition \ref{Iso}, under the simplifying assumption that $f$ is a leafwise homotopy equivalence   that the map defined in the previous proposition extends to
an isometric isomorphism of Hilbert modules.
\end{remark}

\begin{proposition}\label{Compacts}\
The representation $\pi_f$ is valued in the $C^*$-algebra   $\mathcal{K}_{C^*({\maG'}_{X'}^{X'})}(\mathcal{E}_{X'}^{X}(f))$ of adjointable compact operators:
$$
\pi_{f}:C^\ast(\mathcal{G}^{X}_{X})\longrightarrow \mathcal{K}_{C^*({\maG'}_{X'}^{X'})}(\mathcal{E}_{X}^{X'}( f)).
$$
Moreover, if $f$ is an oriented  leafwise homotopy equivalence, then $\pi_f$ is a $C^*$-algebra isomorphism.
\end{proposition}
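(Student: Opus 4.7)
The plan is to treat the two assertions separately. For the first, since $\pi_f$ is bounded (an immediate consequence of the explicit formulas and the $C^*$-identity on Hilbert modules) and since $\mathcal{K}_A(\mathcal{E}_{X'}^X(f))$ is a closed two-sided ideal in $\mathcal{L}_A(\mathcal{E}_{X'}^X(f))$ with $A := C^*({\maG'}_{X'}^{X'})$, it suffices to verify that $\pi_f(\phi)\in \mathcal{K}_A(\mathcal{E}_{X'}^X(f))$ for every $\phi \in C_c(\mathcal{G}_X^X)$. The étale groupoid $\mathcal{G}_X^X$ admits a cover by open bisections (on which both $r$ and $s$ restrict to diffeomorphisms); using a partition of unity subordinate to such a cover I would reduce to the case where $\phi$ is supported in a single open bisection $U$. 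For such $\phi$ the sum defining $\pi_f(\phi)\xi(x,\gamma')$ collapses to at most one non-zero term, indexed by the unique $\alpha \in U$ with $r(\alpha)=x$ when such an $\alpha$ exists.

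For a $\phi$ of this local form, the plan is then to exhibit $\pi_f(\phi)$ as a finite-rank operator $\theta_{\xi_1,\xi_2}$ with $\xi_1,\xi_2 \in C_c(\mathcal{G}_{X'}^X(f))$. Using a bump function $\chi$ on $\mathcal{G}'$ supported in a small open neighborhood of the units in $V'$ and normalized so that the relevant finite sum over lifts through $\mathcal{G}'^{X'}_{X'}$ equals $1$ on the fibers that occur, one sets $\xi_2(x,\gamma')=\overline{\chi(\gamma')}\sqrt{\rho(x)}$ for a suitable cutoff $\rho$ on $X$ near $s(U)$, and defines $\xi_1$ so that the translation by $\phi$ built into the definition of $\pi_f$ is reproduced by the right-module multiplication appearing in $\theta_{\xi_1,\xi_2}$. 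A direct comparison of the explicit formulas for $\theta_{\xi_1,\xi_2}$ and $\pi_f(\phi)$ on the dense core then yields the equality, which extends to the completion.

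For the second assertion, assume that $f$ is an oriented leafwise homotopy equivalence with a leafwise homotopy inverse $g:(V',\mathcal{F}') \to (V,\mathcal{F})$. Applying the construction to $g$ produces a reduced Hilbert bimodule $\mathcal{E}_X^{X'}(g)$ together with a $*$-representation $\pi_g$ of $A$ on it. Proposition \ref{composition}, extended from the algebraic $C_c$-level to the $C^*$-completions using the inner-product compatibility already available, should yield isometric isomorphisms of Hilbert bimodules
\begin{equation*}
\mathcal{E}_{X'}^X(f)\otimes_A \mathcal{E}_X^{X'}(g) \;\cong\; \mathcal{E}_X^X(g\circ f), \qquad \mathcal{E}_X^{X'}(g)\otimes_{C^*(\mathcal{G}_X^X)} \mathcal{E}_{X'}^X(f) \;\cong\; \mathcal{E}_{X'}^{X'}(f\circ g).
\end{equation*}
A leafwise homotopy from $g\circ f$ to $\mathrm{id}_V$ (respectively from $f\circ g$ to $\mathrm{id}_{V'}$) would then induce explicit Hilbert-bimodule isomorphisms identifying these with the trivial Morita bimodules $\mathcal{E}_X^X(\mathrm{id}_V)\cong C^*(\mathcal{G}_X^X)$ and $\mathcal{E}_{X'}^{X'}(\mathrm{id}_{V'})\cong A$. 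This exhibits $\mathcal{E}_{X'}^X(f)$ as a Morita equivalence bimodule between $C^*(\mathcal{G}_X^X)$ and $A$, whence $\pi_f$ is a $*$-isomorphism onto $\mathcal{K}_A(\mathcal{E}_{X'}^X(f))$.

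The main technical obstacle is the leafwise homotopy invariance step in the last paragraph: producing an explicit isomorphism of Hilbert bimodules from a leafwise homotopy, while respecting the orientation, is a Hilbert-module counterpart of the standard homotopy invariance in groupoid $KK$-theory and requires a careful direct construction rather than a formal appeal. The bookkeeping in the local finite-rank identification in paragraph two is also delicate because it must be carried out compatibly with the partition of unity so that the resulting compact-operator approximation is norm-controlled.
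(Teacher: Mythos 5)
Your overall strategy is legitimate in outline, but it diverges from the paper's proof and has two genuine soft spots. The paper dismisses the first assertion as ``classical'' and then, for the isomorphism statement, carries out a direct computation: for $\eta_1,\eta_2\in C_c(\mathcal{G}^X_{X'}(f))$ it shows the exact identity $\theta_{\eta_1,\eta_2}=\pi_f(\eta_1\star\eta_2)$, where $(\eta_1\star\eta_2)(\alpha)=\sum_{\alpha'_1}\eta_1(r(\alpha),\alpha'_1)\overline{\eta_2(s(\alpha),f(\alpha^{-1})\alpha'_1)}$. The crucial use of the homotopy-equivalence hypothesis there is the re-indexing step: $f$ induces a bijection $\mathcal{G}^{x}_{x_1}\to\mathcal{G}'^{f(x)}_{f(x_1)}$ on monodromy fibers, which converts a sum over $\gamma'_3\in\mathcal{G}'^{f(x)}_{f(x_1)}$ into a sum over $\gamma_3\in\mathcal{G}^x_{x_1}$ and hence identifies every rank-one operator with $\pi_f$ of an explicit element. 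Surjectivity (and injectivity, argued similarly) then follow since $\pi_f$ is a $*$-homomorphism with closed range. Your proposal contains neither this identity nor the fiberwise bijectivity of $f_*$, which is where the hypothesis actually enters.

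Concerning your two steps. (1) For the first assertion, the reduction to bisections is reasonable since $\mathcal{G}^X_X$ is \'etale, but the claim that a bisection-supported $\phi$ yields an \emph{exactly} finite-rank operator $\theta_{\xi_1,\xi_2}$ is doubtful: unlike the case $f=\mathrm{id}$, the space $\mathcal{G}^X_{X'}(f)$ has no canonical unit section (in general $f(x)\notin X'$), so there is no cutoff $\chi$ ``supported near the units'' whose fiber sums are identically $1$; what one gets is an approximation of $\pi_f(\phi)$ by finite-rank operators, which suffices for compactness but is not the equality you assert. (2) For the isomorphism, your route through the composition isomorphisms $\mathcal{E}^X_{X'}(f)\otimes\mathcal{E}^{X'}_{X}(g)\cong\mathcal{E}^X_X(g\circ f)\cong C^*(\mathcal{G}^X_X)$ is exactly the content of the paper's Proposition \ref{Iso} and Corollary \ref{Morita} — but in the paper the \emph{surjectivity} of the composition map is itself deduced from Proposition \ref{Compacts}, so you cannot simply invoke it without creating a circularity. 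You would need an independent density argument (e.g.\ an approximate-identity argument showing $C_c\ast C_c$ is dense in $C_c(\mathcal{G}^X_{X''}(g\circ f))$); your proposal only secures the isometry from the inner-product compatibility, and an isometric embedding of $\mathcal{E}(f)\otimes\mathcal{E}(g)$ into $C^*(\mathcal{G}^X_X)$ is not enough to conclude that $\mathcal{E}^X_{X'}(f)$ is an invertible correspondence, hence an imprimitivity bimodule. The homotopy-invariance step you flag as the main obstacle is actually the easy part (the paper's $\Lambda\xi(\gamma)=\xi(r(\gamma),\gamma_{r(\gamma)}\gamma)$ does it in one line); the real gap is the surjectivity just described.
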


\begin{proof}
 The first statement is clear since smooth compactly supported functions on $\cG_X^X$ yield compact operators of $\maE_{X'}^X(f)$ by classical arguments. Assume now that $f$ is an oriented  leafwise homotopy equivalence.
Let $\eta_1,\eta_2\in \mathcal{E}_{X}^{X'}(f)$ and denote by $\theta_{\eta_1,\eta_2}$ the corresponding compact operator of $\maE_{X'}^X(f)$, given by
$$
\theta_{\eta_1,\eta_2} \zeta := \eta_1 <\eta_2,\zeta>.
$$
The isomorphism of $C^*$-algebras follows from the fact that since the Hilbert-module $\maE_{X'}^X(f)$ is compatible with the Connes-Skandalis bimodule
(see Proposition \ref{compatibility}), it is an imprimitivity bimodule, using the corresponding result
from \cite{HilsumSkandalis}. It can also be proved directly and we proceed now to do it for surjectivity. The direct proof of injectivity is similar.
 A straightforward computation gives for $(x,\gamma')\in \cG_{X'}^X (f)$:
\begin{eqnarray*}
\theta_{\eta_1,\eta_2} \zeta\underbrace{(x,\gamma')}_{f(x)= r(\gamma')}&=& (\eta_1.<\eta_2,\zeta>)(x,\gamma')\nonumber\\
&=& \sum_{\alpha'\in \mathcal{G}'^{X'}_{s(\gamma')}} \eta_1(x,\gamma'\alpha'^{-1}) <\eta_2,\zeta>(\alpha')\\
&=& \sum_{\alpha'\in \mathcal{G}'^{X'}_{s(\gamma')}} \eta_1(x,\gamma'\alpha'^{-1}) \sum_{x_1\in X\cap L_{s(\alpha')} } \sum_{\gamma'_1\in \mathcal{G}'^{f(x_1)}_{r(\alpha')}} \overline{\eta_2(x_1,\gamma'_1)} \zeta(x_1,\gamma'_1\alpha') \\
&=& \sum_{\alpha'\in \mathcal{G}'^{X'}_{s(\gamma')}} \eta_1(x,\gamma'\alpha'^{-1}) \sum_{x_1\in X\cap L_{s(\alpha')} } \sum_{\gamma'_2\in \mathcal{G}'^{f(x_1)}_{s(\alpha')}} \overline{\eta_2(x_1,\gamma'_2\alpha'^{-1})} \zeta(x_1,\gamma'_2) \hspace{0.7cm}\\
&=& \sum_{\alpha'\in \mathcal{G}'^{X'}_{s(\gamma')}} \eta_1(x,\gamma'\alpha'^{-1}) \sum_{x_1\in X\cap L_{s(\gamma')} } \sum_{\gamma'_3\in \mathcal{G}'^{f(x)}_{f(x_1)}} \overline{\eta_2(x_1,\gamma'^{-1}_3\gamma'\alpha'^{-1})} \zeta(x_1,\gamma'^{-1}_3\gamma') \hspace{0.3cm}\\
&=& \sum_{x_1\in X\cap L_{s(\gamma')} } \sum_{\gamma'_3\in \mathcal{G}'^{f(x)}_{f(x_1)}} \sum_{\alpha'\in \mathcal{G}'^{X'}_{s(\gamma')}} \eta_1(x,\gamma'\alpha'^{-1})   \overline{\eta_2(x_1,\gamma'^{-1}_3\gamma'\alpha'^{-1})} \zeta(x_1,\gamma'^{-1}_3\gamma')\\
&=& \sum_{x_1\in X\cap L_{s(\gamma')} } \sum_{\gamma_3\in \mathcal{G}^{x}_{x_1}} \sum_{\alpha'\in \mathcal{G}'^{X'}_{s(\gamma')}} \eta_1(x,\gamma'\alpha'^{-1})   \overline{\eta_2(x_1,{f}(\gamma^{-1}_3)\gamma'\alpha'^{-1})} \zeta(s(\gamma_3),{f}(\gamma^{-1}_3)\gamma') \hspace{0.5cm}\\ &&\text{ (since $\exists$ unique $\gamma_3 \in  \mathcal{G}^{x}_{x_1}$ such that  ${f}(\gamma_3)=\gamma'_3$) }\\
&=& \sum_{\gamma_3\in \mathcal{G}^{x}_{X}} \sum_{\alpha'\in \mathcal{G}'^{X'}_{s(\gamma')}} \eta_1(x,\gamma'\alpha'^{-1})   \overline{\eta_2(s(\gamma_3),{f}(\gamma^{-1}_3)\gamma'\alpha'^{-1})} \zeta(s(\gamma_3),{f}(\gamma^{-1}_3)\gamma')\\
&=& \sum_{\gamma_3\in \mathcal{G}^{x}_{X}} \sum_{\alpha'_1\in \mathcal{G}'^{f(x)}_{X'}} \eta_1(x,\alpha'_1)   \overline{\eta_2(s(\gamma_3),{f}(\gamma^{-1}_3)\alpha'^{-1}_1)} \zeta(s(\gamma_3),{f}(\gamma^{-1}_3)\gamma')\\
&=& \sum_{\gamma_3\in \mathcal{G}^{x}_{X}}\left( \sum_{\alpha'_1\in \mathcal{G}'^{f(x)}_{X'}} \eta_1(r(\gamma_3),\alpha'_1)   \overline{\eta_2(s(\gamma_3),{f}(\gamma^{-1}_3)\alpha'^{-1}_1)}\right) \zeta(s(\gamma_3),{f}(\gamma^{-1}_3)\gamma')\\
&=& \sum_{\gamma_3\in \mathcal{G}^{x}_{X}} (\eta_1\star\eta_2)(\gamma_3) \zeta(s(\gamma_3),{f}(\gamma^{-1}_3)\gamma')\\
&=& \pi_f(\eta_1\star\eta_2)\zeta(x,\gamma')
\end{eqnarray*}
where we have denoted by $\eta_1\star\eta_2$ the function
$$
(\eta_1\star\eta_2) (\alpha) := \sum_{\alpha'_1\in {\cG'}_{X'}^{f(r(\alpha))}} \eta_1 (r(\alpha), \alpha'_1) {\overline{\eta_2(s(\alpha), f(\alpha^{-1})\alpha'_1)}}.
$$
Thus we get
$$
\theta_{\eta_1,\eta_2} = \pi_f (\eta_1\star \eta_2).
$$
This finishes the proof of surjectivity by classical arguments.
\end{proof}

\subsection{Pull-back maps on Hilbert modules}

Let as before $(V,X,\mathcal{F})$ and $(V',X',\mathcal{F}')$ be closed foliated (oriented) manifolds with complete transversals $X$ and $X'$, respectively.
Let again $f:(V,\mathcal{F})\rightarrow (V',\mathcal{F}')$ be a  leafwise oriented smooth leafwise map. The goal of the present section is to prove that
$f$ induces a well defined
 operator, the pull-back  $f_\phi^*$, which is   functorial and is moreover a chain map between the corresponding de Rham HP-complexes which, see Theorem  \ref{pullback} and Theorem \ref{isomorphism}. Let $E\to V$ and $E'\to V'$ be given hermitian vector bundles. Our  main  interest concerns  leafwise  Grassman bundles, over  $(V,\mathcal{F})$ and $(V',\mathcal{F}')$ respectively.

Since the $C^*$-algebras $C^*(\cG_X^X)$ and $C^*({\cG'}_{X'}^{X'})$ are only Morita equivalent, our goal is to define an adjointable homomorphism
$$
f_\phi^*: \maE_{X', E'} \longrightarrow \maE_{X, E} \otimes _{C^*(\cG_X^X)} \maE_{X'}^X (f),
$$
which will be associated with some nice cutoff function $\phi$ and which will be a chain map between the corresponding HP complexes \cite{HigsonRoe}. We later on prove a Poincar\'e Lemma when $f$ is an oriented  leafwise homotopy equivalence.

Notice that  the manifold $\mathcal{G}_{X'}^V(f)$ is naturally diffeomorphic to the manifold $\mathcal{G}_X\times_{\mathcal{G}^{X}_{X}}\mathcal{G}^X_{X'}(f)$, which is the quotient, under the free and proper action of $\mathcal{G}^{X}_{X}$, of the fibered product
$$
\{(\gamma;(x,\gamma'))\in \mathcal{G}_X\times \mathcal{G}^X_{X'}(f)| x= s(\gamma)\}.
$$
Recall that $((\gamma;(x,\gamma'))\sim (\gamma\alpha;\alpha^{-1}(x,\gamma'))$ for $\alpha\in \mathcal{G}_X^X$ with $r(\alpha)=s(\gamma)$. More precisely, let
$$
\phi_0: \{(\gamma;(x,\gamma'))\in \mathcal{G}_X\times \mathcal{G}^X_{X'}(f)| x= s(\gamma)\} \longrightarrow \mathcal{G}_{X'}^V(f)
$$
be defined as $\phi_0(\gamma;(x,\gamma'))=(r(\gamma),{f}(\gamma)\gamma')$. Then it is easy to check that $\phi_0$ induces a well defined map
$$
\phi : \mathcal{G}_X\times_{\mathcal{G}^{X}_{X}}\mathcal{G}^X_{X'}(f) \longrightarrow \mathcal{G}_{X'}^V(f)
$$
which  is a diffeomorphism.

Define $\pi_1: \mathcal{G}^V_{X'}(f)\rightarrow V$ and $\pi_2: \mathcal{G}^V_{X'}(f)\rightarrow \mathcal{G}'_{X'}$ by projecting onto the first and second factor, respectively. Then the hermitian bundle $E\to V$ allows to define the pre-Hilbert module $C_c^\infty (\cG_X, r^*E)$ over the  pre-$C^*$-algebras $C_c^\infty (\cG_X^X)$. The maximal completion of  $C_c^\infty (\cG_X, r^*E)$ will be denoted $\maE_{X, E}$ and it is a Hilbert module over the maximal $C^*$-algebra $C^*(\cG_X^X)$. We define similarly the Hilbert module $\maE_{X', E'}$ over $C^*(\cG_{X'}^{X'})$ associated with the hermitian bundle $E'\to V'$.

We  define a map $\Phi_f: C_c^\infty (\mathcal{G}^V_{X'}(f),\pi_1^\ast E)\rightarrow C_c^\infty (\mathcal{G}_X\times_{\mathcal{G}_X^X} \mathcal{G}_{X'}^X(f),(r\circ pr_1)^\ast E)$ as follows:
\begin{equation}
\label{Phif}\Phi_f(\xi)[\gamma;(s(\gamma),\gamma')]=\xi(r(\gamma),{f}(\gamma)\gamma'),\quad \text{ for }\xi \in C_c^\infty (\mathcal{G}^V_{X'}(f),\pi_1^\ast E).
\end{equation}
We thus have $ \Phi_f(\xi)[\gamma;(s(\gamma),\gamma')]\in E_{r(\gamma)}$. In the same way, we define the map
$$
\nu_f: C_c^\infty (\mathcal{G}_X,r^\ast E) \otimes C_c^\infty (\mathcal{G}_{X'}^X(f))\longrightarrow  C_c(\mathcal{G}_X\times_{\mathcal{G}^{X}_{X}}\mathcal{G}^X_{X'}(f),(r\circ \pi_1)^\ast E),
$$
by setting
$$
\nu_f (\xi\otimes \eta)[\gamma;(x,\gamma')]:= \sum_{\alpha\in \mathcal{G}^{s(\gamma)}_{X}} \xi(\gamma\alpha)\eta(\alpha^{-1}x',{f}(\alpha^{-1})\gamma').
$$

The spaces $C_c^\infty (\mathcal{G}^V_{X'}(f),\pi_1^\ast E)$ and $C_c^\infty (\mathcal{G}_X\times_{\mathcal{G}^{X}_{X}}\mathcal{G}^X_{X'}(f),(r\circ \pi_1)^\ast E)$ are naturally endowed with the structure of right $C_c^\infty ({\cG'}_{X'}^{X'})$-modules.
Using the diffeomorphism $\phi: \mathcal{G}_X\times_{\mathcal{G}_X^X}\cG_{X'}^X(f) \xrightarrow{\cong} \mathcal{G}^V_{X'}(f)$ described above we have $\Phi_f(\xi)= \xi\circ \phi$.
The inner product on $C_c^\infty (\mathcal{G}_X\times_{\mathcal{G}_X^X} \mathcal{G}_{X'}^X(f),(r\circ pr_1)^\ast E)$ is defined as
$$
<\xi_1,\xi_2>:= <\xi_1\circ \phi,\xi_2\circ \phi>,
$$
where the inner product on the RHS is the one defined on $C_c^\infty (\mathcal{G}^V_{X'}(f),\pi_1^\ast E)$ by the formula:
$$
<\xi,\eta>(\gamma'):= \int _{(v, \gamma'_1)\in V\times \maG'_{r(\gamma')}, f(v)=r(\gamma'_1)} <\xi(v,\gamma'_1),\eta(v,\gamma'_1\gamma)>_{E_v} d\alpha (v).
$$
Here, we assume for simplicity and since we shall only be interested in leafwise homotopy equivalences, that the inverse image of a leaf by $f$ is a finite union of leaves, and $d\alpha$ is the fixed Borel measure on the leaves of $(V,\maF)$. The general case introduces some tedious technicalities that we don't address here.
When $f$ is an oriented  leafwise homotopy equivalence, it is clear that the inverse image of a leaf is a leaf, and then the scalar product becomes:
$$
<\xi,\eta>(\gamma'):=  \int_{v \in L_{\gamma'}} \sum_{\gamma'_1\in \mathcal{G}'^{f(v)}_{r(\gamma')}} <\xi(v,\gamma'_1),\eta(v,\gamma'_1\gamma)>_{E_v} d\alpha (v).
$$
Here $L_{\gamma'}$ is the leaf in $V$ such that $f(L_{\gamma'})= L'_{r(\gamma')}$.

We denote the completion of $C_c^\infty (\mathcal{G}^V_{X'}(f),\pi_1^\ast E)$ with respect to the maximal $C^*$-norm, by   $\maE_{X', E}(f)$.  The completion of $C_c^\infty (\mathcal{G}_X\times_{\mathcal{G}^{X}_{X}}\mathcal{G}^X_{X'}(f),(r\circ \pi_1)^\ast E)$, again with respect to the maximal norm, is denoted by $\maE_{X,X';E}(f)$.

\begin{proposition}
\label{Iso1}
The above maps $\Phi_f$ and $\nu_f$ induce  isomorphisms of Hilbert modules over the $C^*$-algebra $C^*({\cG'}_{X'}^{X'})$:
$$
\Phi_f: \maE_{X', E}(f) \rightarrow \maE_{X,X';E}(f)\text{ and } \nu_f: \maE_{X, E} \otimes_{C^*(\mathcal{G}_X^X)} \maE_{X'}^X(f)\rightarrow \maE_{X,X';E}(f).
$$
\end{proposition}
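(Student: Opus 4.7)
The plan is to handle the two isomorphisms separately, leveraging that $\Phi_f$ is essentially a diffeomorphism pullback while $\nu_f$ requires checking compatibility with the balanced tensor product.

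For $\Phi_f$, the argument is almost formal. By definition $\Phi_f(\xi) = \xi \circ \phi$ where $\phi$ is the diffeomorphism from $\mathcal{G}_X \times_{\mathcal{G}_X^X} \mathcal{G}_{X'}^X(f)$ to $\mathcal{G}_{X'}^V(f)$ described earlier in the section, and the inner product on $\maE_{X,X';E}(f)$ was \emph{defined} as the pullback $\langle \xi_1, \xi_2\rangle := \langle \xi_1 \circ \phi, \xi_2 \circ \phi\rangle$. Thus $\Phi_f$ is tautologically an isometry at the pre-Hilbert level. Right $C_c^\infty({\cG'}_{X'}^{X'})$-linearity is immediate from \eqref{Phif} since the right action of $C^*({\cG'}_{X'}^{X'})$ on both sides is by convolution in the $\gamma'$-variable alone, which is preserved by the diffeomorphism $\phi$. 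Density of the image is also automatic: since $\phi$ is a diffeomorphism, $\xi \mapsto \xi \circ \phi$ maps $C_c^\infty$ onto $C_c^\infty$. Extending by continuity to the maximal completions then yields the desired isomorphism of Hilbert $C^*({\cG'}_{X'}^{X'})$-modules.

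For $\nu_f$, I first check that it descends to the balanced tensor product. Given $a \in C_c^\infty(\mathcal{G}_X^X)$, $\xi \in C_c^\infty(\mathcal{G}_X, r^*E)$ and $\eta \in C_c^\infty(\mathcal{G}_{X'}^X(f))$, a direct computation using formula \eqref{rtact} for $\xi\cdot a$ and the formula for $\pi_f(a)\eta$ given in the previous subsection shows $\nu_f(\xi\cdot a \otimes \eta) = \nu_f(\xi \otimes \pi_f(a)\eta)$; the sum over $\alpha$ in the definition of $\nu_f$ combines with the sum coming from the groupoid convolution to produce the same expression. Thus $\nu_f$ is well defined on $\maE_{X,E} \otimes_{C^*(\mathcal{G}_X^X)} \maE_{X'}^X(f)$.

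Next I verify that $\nu_f$ preserves inner products. On the balanced tensor product the inner product is
\begin{equation*}
\langle \xi_1 \otimes \eta_1, \xi_2 \otimes \eta_2\rangle = \langle \eta_1, \pi_f(\langle \xi_1, \xi_2\rangle)\eta_2\rangle.
\end{equation*}
Expanding the right-hand side in terms of sums over $\cG_X^{r(\cdot)}$ using the formula for $\pi_f$ and the inner product on $\maE_{X,E}$, and expanding $\langle \nu_f(\xi_1\otimes \eta_1), \nu_f(\xi_2 \otimes \eta_2)\rangle$ using the integral formula for the inner product on $\maE_{X,X';E}(f)$ (equivalently, pulled back via $\phi$), one sees the two agree: the integration over leaves in $V$ decomposes, via properness and freeness of the $\cG_X^X$-action on $\cG_X$, as a sum over $\cG_X$-orbits times an integration over a fundamental domain, which reproduces exactly the groupoid convolution. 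This is the main computational step, entirely analogous to the computation performed in the proof of Proposition \ref{Compacts}.

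Finally, for density of the image of $\nu_f$ in $\maE_{X,X';E}(f)$, it suffices by the preceding to show density of the image in $C_c^\infty(\mathcal{G}_X \times_{\mathcal{G}_X^X} \mathcal{G}_{X'}^X(f), (r\circ \pi_1)^* E)$ with respect to the pre-Hilbert norm. Using a partition of unity on $\cG_{X'}^V(f) \cong \cG_X\times_{\cG_X^X} \cG_{X'}^X(f)$ subordinate to a covering by product-like charts (over which the $\cG_X^X$-action admits local sections, so the fibered product locally trivializes), any $C_c^\infty$ section is a finite sum of sections supported in such charts, each of which can be uniformly approximated by finite linear combinations of elementary tensors $\xi \otimes \eta$ in the image of $\nu_f$. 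The main obstacle in the whole proof is really the inner product identity for $\nu_f$: all the indexing sums must be carefully matched using the equivalence relation defining the fibered product, and this is where Assumption \ref{Etale} enters to guarantee that the sums over $X \cap L_{s(\alpha)}$ are countable and absolutely convergent for compactly supported data.
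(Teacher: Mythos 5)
Your proposal is correct and takes essentially the same route as the paper: the paper likewise treats $\Phi_f$ as a tautological isometry with inverse induced by $\phi^{-1}$, and for $\nu_f$ it simply defers to the computations of Propositions \ref{composition} and \ref{Iso} (descent to the balanced tensor product, the inner-product identity $\langle\nu_f(\xi_1\otimes\eta_1),\nu_f(\xi_2\otimes\eta_2)\rangle=\langle\eta_1,\pi_f(\langle\xi_1,\xi_2\rangle)\eta_2\rangle$, and surjectivity), which are exactly the steps you spell out. Your write-up is merely more explicit about the density-of-elementary-tensors argument, which the paper leaves implicit.
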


\begin{proof}
The map $\Phi_f$ is clearly an isometry and  $\Phi_f$ is obviously surjective with  inverse given by the map induced by $\phi^{-1}$. Therefore $\Phi_f$ is an isometric isomorphism of Hilbert modules.

Also, we can follow the proof of Proposition \ref{composition} to deduce that $\nu_f$ extends to an isometric isomorphism of Hilbert modules. Notice that an isometric isomorphism is obviously adjointable with the adjoint given by the inverse.
\end{proof}

\begin{definition}
 The composition map $\nu_f^{-1}\circ \Phi_f$ will be denoted by $\ep_f$. So $\ep_f$ is an isomorphism of Hilbert modules over $C^*({\cG'}_{X'}^{X'})$:
$$
\ep_f:  \maE_{X', E}(f) \longrightarrow \maE_{X, E} \otimes_{C^*(\mathcal{G}_X^X)} \maE_{X'}^X(f).
$$
\end{definition}

\begin{proposition}
Let $\xi$ be an element of $C_c^\infty (\cG_{X'}^V(f), \pi_1^*E)$, then we have
$$
(\ep_f \circ \td^f )(\xi) = ((\td \otimes id)\circ \ep_f)(\xi),
$$
More precisely, $\Phi_f$ and $\nu_f$ are both chain maps.
\end{proposition}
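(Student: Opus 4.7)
The plan is to verify the chain-map property for $\Phi_f$ and $\nu_f$ separately; the identity for $\ep_f = \nu_f^{-1}\circ \Phi_f$ then follows by composition. Each of the three underlying function spaces carries a leafwise de Rham differential obtained by lifting the leafwise differential $d$ on $(V,\maF)$ along a natural submersion to $V$: along $r$ on $C_c^\infty(\maG_X, r^*E)$, along $\pi_1$ on $C_c^\infty(\maG^V_{X'}(f), \pi_1^*E)$, and along $r\circ pr_1$ on $C_c^\infty(\maG_X\times_{\maG_X^X}\maG^X_{X'}(f), (r\circ pr_1)^*E)$. The first observation is that these three lifts are compatible via the diffeomorphism $\phi$, because $\pi_1\circ \phi = r\circ pr_1$.

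The chain-map property for $\Phi_f$ is then immediate. Indeed, $\Phi_f(\xi) = \xi\circ \phi$, and since $\phi$ is a leaf-preserving diffeomorphism that intertwines the two submersions to $V$, the pullback by $\phi$ commutes with the corresponding leafwise de Rham differentials. This is the easy half.

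For $\nu_f$, I would differentiate the defining formula
\[
\nu_f(\xi\otimes\eta)[\gamma; (x,\gamma')] = \sum_{\alpha\in \maG_X^{s(\gamma)}} \xi(\gamma\alpha)\,\eta(\alpha^{-1}x, f(\alpha^{-1})\gamma')
\]
in the leaf direction of $\gamma$. Local finiteness of the sum (by compact support) permits termwise differentiation. The factor $\eta(\alpha^{-1}x, f(\alpha^{-1})\gamma')$ does not depend on $\gamma$ since $(x,\gamma')$ is held fixed and $\alpha$ is a discrete summation index, so only $\xi(\gamma\alpha)$ is differentiated. Right translation by a fixed $\alpha\in \maG_X^X$ is a leaf-preserving diffeomorphism of $\maG_X$ under which $\td$ is invariant by construction (it is the $\maG_X^X$-equivariant lift of $d$), hence $\td[\xi(\cdot\,\alpha)](\gamma) = (\td\xi)(\gamma\alpha)$. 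Re-summing produces $\nu_f((\td\xi)\otimes \eta)$, proving $\nu_f\circ(\td\otimes 1) = \td\circ \nu_f$. Composing with the previous identity then yields $\ep_f\circ \td^f = (\td\otimes 1)\circ \ep_f$.

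The main potential obstacle is the bookkeeping required to confirm that the leafwise differentials on the three spaces really are compatible with each other under $\Phi_f$ and $\nu_f$. Once one checks that all three lifts are characterized by leafwise right-$\maG_X^X$-invariance together with the requirement of restricting, in the $V$-direction, to the leafwise de Rham of $(V,\maF)$, both chain-map identities follow from the local arguments above.
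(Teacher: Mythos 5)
Your argument is correct and follows essentially the same route as the paper: the chain-map property of $\Phi_f$ comes from the identity $\pi_1\circ\phi = r\circ pr_1$, so that pullback by the leafwise diffeomorphism $\phi$ intertwines the two lifts of the leafwise de Rham differential. The only difference is one of emphasis: the paper dismisses the $\nu_f$ half as clear and spells out the $\Phi_f$ half (via sections of the form $\pi_1^*\eta$ plus locality), whereas you spell out the $\nu_f$ half by termwise differentiation of the locally finite sum using right-invariance of $\td$, which is a welcome filled-in detail.
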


\begin{proof}
The identification $\nu_f$ is clearly a chain map and it will be forgotten in this proof. Recall that we have a diffeomorphism
$$
\phi: \maG_X \times_{\maG_X^X} \maG_{X'}^X (f) \longrightarrow \maG_{X'}^V (f) \text{ given by } \phi ([\gamma, (x, \gamma')]) = (r(\gamma), f(\gamma)\gamma').
$$
Moreover, the map $r\circ \pi_1: \maG_X\times \maG_{X'}^X(f) \to V$ induces a smooth covering map $r_1: \maG_X\times_{\maG_X^X} \maG_{X'}^X(f) \to V$ such that $\pi_1 \circ \phi = r_1$. Here projections on the first factor are denoted $\pi_1$. The differential $\td^f$ is by definition the pull-back differential of the leafwise de Rham diffrential $d$ under the covering map $\pi_1: \maG_{X'}^V (f)\to V$, and it can be denoted by the suggestive notation $\pi_1^*d$. The differential $\td \otimes id$ is by definition the differential induced on the quotient manifold $\maG_X \times_{\maG_X^X} \maG_{X'}^X (f)$ by the pull-back under $r\circ \pi_1: \maG_X\times \maG_{X'}^X(f) \to V$ of the same leafwise de Rham differential $d$. So,
$$
\td^f = \pi_1^*d \text{ and } \td \otimes id \text{ is induced by } (r\circ \pi_1)^* d.
$$
Denote by $p: \maG_X\times \maG_{X'}^X(f) \to \maG_X \times_{\maG_X^X} \maG_{X'}^X (f)$ the covering map. If $\eta$ is a smooth leafwise form on $V$ then we can write
$$
r_1^* (d\eta) = (\td\otimes id) (r_1^*\eta) \text{ and } \pi_1^* (d\eta) = \td^f (\pi_1^*\eta).
$$
Therefore,
\begin{eqnarray*}
 (\Phi_f \circ \td^f) (\pi_1^*\eta) & = & (\phi^* \circ \pi_1^*) d\eta\\
&=& (\pi_1\circ \phi)^* \phi^* d\eta\\
& = & r_1^* d\eta\\
& = & (\td\otimes id) (r_1^*\eta)\\
& = & (\td\otimes id) \phi^*(\pi_1^*\eta)
\end{eqnarray*}
So,  $\Phi_f \circ \td^f = (\td\otimes id) \circ \Phi_f$ on sections of the form $\pi_1^*\eta$. Now, the statement being local, a classical argument again allows to deduce the allowed relation for any smooth form on $\maG_{X'}^V(f)$.
\end{proof}

{\bf{We assume from now on that $E:=\Lambda^\ast T^\ast_{\C} \mathcal{F}$ and $E':=\Lambda^\ast T^\ast_{\C} \mathcal{F}'$ are the leafwise grassmannian bundles with their hermitian structures inherited from the leafwise metrics.}}

We proceed first to define a smooth pull-back map induced by $f$ between the corresponding spaces of smooth differential forms. Let $d'$ denote the longitudinal de Rham differential along the leaves of $(V', \maF')$, and let $\tilde{d'}=(\tilde{d'}_{x'})_{x'\in V'}$ be its lift under the covering map $r$, to the fibers of the monodromy groupoid $\cG'$. Similarly, let $d$ be the de Rham differential on the leaves of $(V,\maF)$ and $\tilde{d}$ its lift by $r$ the the fibers of $\cG$. We also set $\td^f =(\td^f_{x'})_{x'\in X'}$ where $\td^f_{x'}$ is the lift of $d$, by the first projection $\pi_1$, to $\mathcal{G}_{x'}^V(f):=\{(v,\gamma')\in V\times \mathcal{G}_{x'}| r(\gamma')=f(v)\}$.
\begin{definition}
 For $\omega'\in C_c^\infty(\mathcal{G}'_{X'},r^\ast E')$, we define
$$
\Psi_f(\omega')(v,\gamma')=(^t{f_{\ast_{v}}})[(\pi_2^{!} \omega')(v,\gamma')], \quad  (v,\gamma')\in \mathcal{G}^V_{X'}(f).
$$
where $^t{f_{\ast_{v}}}: \Lambda^\ast  T^\ast_{f(v)} \mathcal{F}' \rightarrow  \Lambda^\ast T^\ast_{v}\mathcal{F}$ is the transpose of the tangent map $f_{\ast,v}: T_{v}\mathcal{F}\rightarrow T_{f(v)}\mathcal{F}'$ and $\pi_2^!$ is the pullback via $\pi_2$ of elements of $C_c^\infty(\mathcal{G}'_{X'},r^\ast E')$, so  $(\pi_2^{!} \omega')(v,\gamma')=\omega'(\gamma')\in E'_{r(\gamma')=f(v)}$.
\end{definition}
When $f$ is a leafwise homotopy equivalence, it is uniformly proper on the different foliated spaces, see \cite{HeitschLazarov, BenameurHeitschJDG}, therefore in this case the support of $\Psi_f (\omega')$ is also compact and we get in this way a map
$$
\Psi_f:C_c^\infty(\mathcal{G}'_{X'},r^\ast E')\longrightarrow C_c^\infty(\mathcal{G}^V_{X'}(f),\pi_1^\ast E).
$$
We also denote by $\Psi_f$ the same map acting on smooth, not necessarily compactly supported, sections. Denote by $f^*\alpha'$ the usual pull-back  by $f$ of a differential form $\alpha'$ on $V'$.

\begin{proposition}
We have the following properties:
\begin{enumerate}
 \item For any $\alpha'\in C_c^\infty(L' ,E')$, $ \pi_1^!(f^\ast \alpha')= \Psi_f(r^!\alpha')$.
\item $\tilde{d}^f_{x'}\circ \Psi_f=\Psi_f\circ \tilde{d}'_{x'}$ on $C_c^\infty(\mathcal{G}'_{x'}, r^*E')$.
\end{enumerate}
where we have denoted, as for $\pi_2$ above, pullbacks via $\pi_1$ and $r$ by $\pi_1^!$ and $r^!$ respectively.
\end{proposition}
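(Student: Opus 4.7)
For item (1), the plan is a direct unwinding of the definitions. Fix $(v,\gamma')\in\mathcal{G}^V_{X'}(f)$, so that $r(\gamma')=f(v)$. On the one hand,
$$
\pi_1^!(f^*\alpha')(v,\gamma')=(f^*\alpha')(v)=({}^tf_{*,v})\bigl[\alpha'(f(v))\bigr].
$$
On the other hand, by definition of $\Psi_f$ and of $\pi_2^!$,
$$
\Psi_f(r^!\alpha')(v,\gamma')=({}^tf_{*,v})\bigl[(r^!\alpha')(\gamma')\bigr]=({}^tf_{*,v})\bigl[\alpha'(r(\gamma'))\bigr]=({}^tf_{*,v})\bigl[\alpha'(f(v))\bigr],
$$
using the fibered product condition $r(\gamma')=f(v)$. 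The two expressions coincide.

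For item (2), the plan is to reduce to the classical naturality $f^*d'=df^*$ for the ordinary leafwise de Rham differentials, using item (1) as a bridge. Recall that $\tilde{d}'_{x'}$ is, by construction, the unique lift of $d'$ along the covering map $r:\mathcal{G}'_{x'}\to L'_{x'}$; equivalently, for every smooth form $\alpha'$ on $L'_{x'}$, $\tilde{d}'_{x'}(r^!\alpha')=r^!(d'\alpha')$. Similarly $\tilde{d}^f_{x'}$ is the lift of $d$ along $\pi_1:\mathcal{G}^V_{x'}(f)\to V$ (which is a covering of leaves via $v\mapsto L_{\pi_1(v,\gamma')}$), so $\tilde{d}^f_{x'}(\pi_1^!\beta)=\pi_1^!(d\beta)$ for every smooth leafwise form $\beta$ on $V$. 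Combining these with item (1) and with $f^*d'=df^*$, we compute, for $\alpha'$ smooth on $L'_{x'}$:
\begin{align*}
\tilde{d}^f_{x'}\bigl(\Psi_f(r^!\alpha')\bigr)
&=\tilde{d}^f_{x'}\bigl(\pi_1^!(f^*\alpha')\bigr)
=\pi_1^!\bigl(d(f^*\alpha')\bigr)
=\pi_1^!\bigl(f^*(d'\alpha')\bigr)\\
&=\Psi_f\bigl(r^!(d'\alpha')\bigr)
=\Psi_f\bigl(\tilde{d}'_{x'}(r^!\alpha')\bigr).
\end{align*}
Hence the identity $\tilde{d}^f_{x'}\circ\Psi_f=\Psi_f\circ\tilde{d}'_{x'}$ holds on all sections of the form $r^!\alpha'$.

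To conclude, I would argue by locality: since $r:\mathcal{G}'_{x'}\to L'_{x'}$ is a local diffeomorphism (covering of leaves), every element of $C_c^\infty(\mathcal{G}'_{x'},r^*E')$ can be written, using a partition of unity subordinated to evenly covered open sets, as a finite sum of sections each of which is of the form $r^!\alpha'$ on an open neighbourhood. Both $\tilde{d}^f_{x'}\circ\Psi_f$ and $\Psi_f\circ\tilde{d}'_{x'}$ are first-order differential operators and in particular local, so the identity proved on the basic sections propagates to the whole space $C_c^\infty(\mathcal{G}'_{x'},r^*E')$. The main (minor) point to be careful about is precisely this reduction to locally-pulled-back sections; everything else is formal.
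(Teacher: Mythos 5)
Your proposal is correct and follows essentially the same route as the paper: item (1) by direct unwinding of the definitions using $f\circ\pi_1=r\circ\pi_2$, and item (2) by establishing the identity on pulled-back sections $r^!\alpha'$ via item (1) together with $f^*\circ d'=d\circ f^*$, then extending by locality. The only cosmetic difference is that the paper writes a general section locally as $\sum_i h_i\, r^!\alpha'_i$ and runs a Leibniz-rule computation, treating the scalar coefficients $h_i$ by a second local reduction, whereas you absorb everything into a single local pullback — which is legitimate since $\Psi_f$ and the lifted differentials only depend on germs along the fibre of $\pi_2$.
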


\begin{proof}
Notice that for $(v,\gamma')\in \mathcal{G}_{X'}^V(f)$, we have
$$
f\circ\pi_1(v,\gamma')=f(v)=r(\gamma')= r\circ \pi_2(v,\gamma').
$$
Therefore, we compute
\begin{eqnarray*}
\pi_1^!(f^\ast\alpha')(v,\gamma')&=&(f^\ast\alpha')(v)\\
&=& ^t{f_{\ast_{v}}}(\alpha'_{f(v)})\\
&=& ^t{f_{\ast_{v}}} [\alpha'_{(f\circ \pi_1)(v,\gamma')}] \\
&=& ^t{f_{\ast_{v}}} [(f\circ \pi_1)^!(\alpha'_{(v,\gamma')})] \\
&=& ^t{f_{\ast_{v}}} [(r\circ \pi_2)^!(\alpha'_{(v,\gamma')})] \hspace{0.2cm} \\
&=& ^t{f_{\ast_{v}}} [\pi_2^!\circ r^!(\alpha'_{(v,\gamma')})] \\
&=& \Psi_f(r^!\alpha')(v,\gamma')
\end{eqnarray*}
hence the first item.

For the second item, we fix $s\in C_c^\infty(\mathcal{G}_{x'},r^\ast E')$. Since the statemnt is local in the leaf $L'_{x'}$, we can assume that our section $s$  can be written in the  form:
$$
s= \sum_{i}h_i r^!\alpha'_i, \text{ where }h_i\in C^\infty_c(\mathcal{G}'_{x'})\text{ and } \alpha'_i\in C_c^\infty(L'_{x'},E').
$$
Now, notice that for any $\alpha'\in C^\infty (L'_{x'}, E')$, the following relation holds
$$
(\tilde{d}_{x'}^f\circ \Psi_f) (r^!\alpha') = (\Psi_f \circ \tilde{d}_{x'}) (r^!\alpha').
$$
Indeed, since $\Psi_f(r^!\alpha') = \pi_1^! (f^*\alpha')$ and $\tilde{d}_{x'}^f$ is precisely the pull-back operator of $d_x$ under $\pi_1$, this result is a consequence of the fact that $(f^* \circ d'_{x'}) (\alpha')= (d_x\circ f^*)(\alpha')$.
The differential  $\tilde{d}_{x'}^f$ can also be described as the de Rham differential on the manifold $\cG_{x'}^V(f)$ since this latter is the total space of a covering over $L_x$ given precisely by the projection $\pi_1$. So we compute
\begin{eqnarray}
\label{gen1}
(\tilde{d}^f_{x'}\circ \Psi_f)(\sum_i h_i r^!\alpha'_i)&=& \tilde{d}_{x'}^f\left[ \sum_i (\pi_2^!h_i) (\Psi_f r^!\alpha')\right] \nonumber\\
&=& \sum_i \left(\tilde{d}^f_{x'} \pi_2^!h_i\wedge \Psi_fr^!\alpha'_i+ \pi_2^!h_i \tilde{d}_{x'}^f\Psi_f r^!\alpha'_i\right) \nonumber\\
&=& \sum_i \left(\tilde{d}^f_{x'} \pi_2^!h_i\wedge \Psi_fr^!\alpha'_i+ \pi_2^!h_i \Psi_f (\tilde{d}_{x'} r^!\alpha'_i)\right)
\end{eqnarray}
We also have,
\begin{eqnarray}\label{gen2}
(\Psi_f\circ \tilde{d}_{x'})(\sum_i h_i r^!\alpha'_i)&=& \Psi_f\left[\sum_i \tilde{d}_{x'}h_i\wedge r^!\alpha'_i+ h_i \tilde{d}_{x'}r^!\alpha'_i\right] \nonumber\\
&=& \sum_i \left(\Psi_f(\tilde{d}_{x'}h_i\wedge r^!\alpha'_i)+ \Psi_f(h_i \tilde{d}_{x'}r^!\alpha'_i)\right)
\end{eqnarray}
It thus remains to show that for a given smooth function $h$ on $\cG'_{x'}$, the following relation holds
$$
(\Psi_f\circ \tilde{d}_{x'}) (h) = (\td_{x'}^f \circ \pi_2^!) (h).
$$
Since $\tilde{d}_{x'}$ is a differential operator, this is again a local statement and we can use the covering $\cG'_{x'}\to L'_{x'}$ to reduce to an open submanifold $\tU$ of $\cG'_{x'}$ which is diffeomorphic through $r$ to an open  submanifold $U$ of $L'_{x'}$. Therefore, we can suppose that $h$ is the pull-back $r^!h_0=r^*h_0$ of a smooth function $h_0$ on $U$. But then
$$
(\Psi_f\circ \tilde{d}_{x'}) (h) = \Psi_f (r^* d_{x'}h_0) \text{ and } (\td_{x'}^f \circ \pi_2^!) (h)= \pi_1^! (d_x f^*h_0)= (\pi_1^!\circ f^*) (d_xh_0).
$$
Evaluating at $(v,\gamma')\in \cG_{x'}^V(f)$ we thus get
$$
(\Psi_f\circ \tilde{d}_{x'}) (h) (v,\gamma')= ^tf_{*,v} (d_x h_0)_{r(\gamma')}\text{ while } (\td_{x'}^f \circ \pi_2^!) (h) (v,\gamma')=^tf_{*,v}(d_xh_0)_{f(v)}.
$$
Since $f(v)=r(\gamma')$, the proof is finished by using \eqref{gen1} and \ref{gen2}.
\end{proof}

\begin{remark}
The map $\Psi_f$ is  not bounded in general (and hence not adjointable). In fact, it is even not regular in general!
\end{remark}

We denote by $\Delta$ and $\Delta'$ the Laplace operator along the leaves of the monodromy groupoids $\maG$ and $\maG'$ respectively. So, $\Delta = (\td + \td^*)^2$ where $\td^*$ is the formal adjoint and $\Delta = (\Delta_v)_{v\in V}$ where $\Delta_v$ is the Laplace operator on differential forms of $\cG_x$. Therefore, and since $\Delta$ is a differential operator, it yields a linear map
$$
\Delta : C_c^\infty (\cG_X, r^*E) \longrightarrow  C_c^\infty (\cG_X, r^*E).
$$

\begin{lemma}\cite{Vassout}
The operator $\Delta$ is $C_c^\infty(\cG_X^X)$-linear and extends to a regular self-adjoint operator, still denoted $\Delta$, on the Hilbert $C^*(\cG_X^X)$-module $\maE_{X,E}$. The similar statement holds of course for $\Delta'$.
\end{lemma}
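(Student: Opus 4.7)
The plan is to split the statement into two independent assertions, the $C_c^\infty(\cG_X^X)$-linearity and the regularity/self-adjointness, and handle each by invoking standard facts about longitudinal operators on the monodromy groupoid.

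For the $C_c^\infty(\cG_X^X)$-linearity, I would verify the commutation directly from the definitions. The right action of $\cG_X^X$ on $\cG_X$ by composition restricts, for each fixed $\gamma' \in \cG_X^X$, to an isometric diffeomorphism between the $r$-fibers $\cG_{r(\gamma')}$ and $\cG_{s(\gamma')}$ along which $\Delta$ is defined. Since any leafwise differential operator on $\cG_X$ commutes with such right-translations, in particular so do $\td$, $\td^*$, and hence $\Delta = (\td + \td^*)^2$. Passing to the module action via formula \eqref{rtact} and summing over $\gamma' \in \mathcal{G}_{r(\gamma)}^X$ then yields $\Delta(\xi f) = (\Delta \xi) f$ for smooth compactly supported $\xi, f$.

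For the regularity and self-adjointness, the strategy is to factor through $D := \td + \td^*$, which is a first-order elliptic symmetric longitudinal pseudodifferential $\maG$-operator on $r^*E$. By the $\maG$-pseudodifferential calculus developed in \cite{Vassout, VassoutThesis}, any such operator extends from its smooth compactly supported domain to a regular self-adjoint operator on the Hilbert $C^*(\cG_X^X)$-module $\maE_{X,E}$, whose resolvents $(D \pm i)^{-1}$ furthermore lie in $\maK_{C^*(\cG_X^X)}(\maE_{X,E})$; this uses the groupoid Sobolev modules together with the ellipticity of $D$, which guarantees that $1+D^2$ has dense range. Then $\Delta = D^2$ is positive regular self-adjoint by functional calculus, with dense domain $\Dom(D^2)$ containing $C_c^\infty(\cG_X, r^*E)$, and compact resolvent $(1+\Delta)^{-1} = (D-i)^{-1}(D+i)^{-1}$.

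The only nontrivial ingredient is the groupoid pseudodifferential calculus of Vassout, and the essential point one needs to extract from it is that a positive-order formally self-adjoint elliptic $\maG$-operator is essentially self-adjoint on $C_c^\infty(\cG_X, r^*E)$ with compact resolvent; this is the main obstacle, but it is settled in the cited references. The identical argument applies verbatim to the Laplacian $\Delta'$ on the monodromy groupoid $\maG'$ with its complete transversal $X'$, giving a regular self-adjoint operator on $\maE_{X', E'}$.
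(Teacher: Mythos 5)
The paper offers no proof of this lemma beyond the citation to Vassout, and your reconstruction --- right-invariance of the leafwise operators under the $\cG_X^X$-translations for the module linearity, plus essential self-adjointness, regularity and compactness of resolvents for elliptic symmetric $\maG$-pseudodifferential operators from the cited calculus, applied first to $D=\td+\td^*$ and then to $\Delta=D^2$ via the functional calculus for regular operators --- is exactly the standard argument that citation stands for, and it is the same route the paper itself gestures at when it later proves regularity of $d_X$ and $\delta_X$ and condition (4) of the HP-complex. The only slip is terminological: the sets $\maG_{r(\gamma')}$ and $\maG_{s(\gamma')}$ you translate between are fibers of the \emph{source} map $s$ (each mapped by $r$ as a covering onto the common leaf), not ``$r$-fibers,'' though the translation argument itself is correct.
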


Using the continuous functional calculus theorem for regular self-adjoint operators, we define for any continuous bounded function $\phi$ on $\R$, a
bounded operator $\phi(\Delta)$ on the Hilbert $C^*(\cG_X^X)$-module $\maE_{X,E}$.

\begin{definition}
 Let $\phi$ be a function on $\R$ which is the Fourier transform of an element of $ C_c^\infty(\R)$ and such that $\phi(0)=1$. We define
$$
\Psi_f^\phi:= \Psi_f\circ \phi(\Delta') : C_c^\infty ({\cG'}_{X'}, r^*E') \longrightarrow  C^\infty (\cG_{X'}^V(f), \pi_1^*E).
$$
\end{definition}

\begin{proposition}
Assume that $f$ is uniformly proper \cite{BenameurHeitschLHE}, then the $C_c^\infty(\cG_X^X)$-linear map $\Psi_f^\phi$ extends to an adjointable operator
$$
\Psi_f^\phi : \maE_{X',E'} \longrightarrow  \maE_{X', E}(f).
$$
\end{proposition}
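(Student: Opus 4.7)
The plan is to represent $\Psi_f^\phi$ by a leafwise smooth integral kernel with uniformly compact support and then appeal to standard bounds for such kernel operators on the maximal groupoid $C^*$-module.

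First I would exploit the hypothesis on $\phi$ to get finite propagation for $\phi(\Delta')$. Write $D' := \td' + {\td'}^\ast$ so that $\Delta' = {D'}^2$, and let $g\in C_c^\infty(\R)$ with $\widehat{g}=\phi$; up to the passage to an even function $\psi$ with $\psi(D')=\phi(\Delta')$ (the standard trick), the Fourier inversion and Chernoff's finite propagation speed theorem for the leafwise first order self-adjoint elliptic operator $D'$ on the fibers of $\cG'$ express $\phi(\Delta')$ as an integral operator whose leafwise Schwartz kernel $k_\phi$ is a smooth section of the relevant form bundle over $\cG'$ with uniformly compact support (of propagation at most the radius of $\operatorname{supp}\widehat{g}$). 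In particular $\phi(\Delta')$ preserves smooth compactly supported sections with a controlled enlargement of supports, and is a bounded operator on $\maE_{X',E'}$ by the continuous functional calculus.

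Next I would combine this with the explicit form of $\Psi_f$ to get a kernel representation of the composition. By the previous proposition, $\Psi_f$ is the fiberwise pull-back of differential forms by $f$, so post-composing with $\Psi_f$ amounts to the substitution $(v,\gamma') \mapsto (f(v), \gamma')$ followed by the transpose of the tangent map, applied to the kernel $k_\phi$. The result is a smooth section $k_f^\phi$ on an appropriate fibered product over ${\cG'}_{X'}^{X'}$ whose support along each leaf of $\cG_{X'}^V(f)$ is controlled by $f^{-1}(\operatorname{supp}_{\text{leaf}} k_\phi)$. Because $f$ is uniformly proper \cite{BenameurHeitschLHE}, this pull-back support is uniformly bounded in the leafwise distance. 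Thus $\Psi_f^\phi$ is realized by a leafwise smooth, uniformly compactly supported kernel on the appropriate groupoid.

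Finally I would deduce adjointability and boundedness with respect to the maximal $C^\ast$-norms from the standard kernel estimates on groupoid Hilbert modules (see \cite{Vassout, VassoutThesis, HilsumSkandalis}): an operator given by a smooth leafwise compactly supported kernel between the Hilbert $C^\ast({\cG'}_{X'}^{X'})$-modules in play is bounded and adjointable, the adjoint being represented by the formal transpose-conjugate kernel. Applying this to $k_f^\phi$ produces a bounded adjointable extension
$$
\Psi_f^\phi : \maE_{X', E'} \longrightarrow \maE_{X', E}(f),
$$
and the adjoint $(\Psi_f^\phi)^*$ may be identified with the operator associated to the conjugate kernel.

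The main obstacle is the last step: controlling the maximal $C^\ast$-norm of the kernel operator. Since $\Psi_f$ alone is not even bounded, one genuinely needs the regularization by $\phi(\Delta')$ (together with uniform properness of $f$) to produce a kernel which is both smooth and uniformly compactly supported along the fibers, so that the convolution-type bounds of \cite{Vassout, VassoutThesis} apply and give a universal estimate for any bounded $\ast$-representation of $C^\ast({\cG'}_{X'}^{X'})$.
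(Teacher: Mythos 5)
Your proposal follows essentially the same route as the paper's proof: both compute the Schwartz kernel of $\Psi_f^\phi$, namely $K_f^\phi(v,\gamma') = {}^t f_{*,v}\circ k_\phi(\gamma')$ where $k_\phi$ is the smooth, compactly supported leafwise kernel of $\phi(\Delta')$ furnished by the compact support of $\widehat{\phi}$ (the paper cites \cite{Roe} for this step, you spell out the finite-propagation argument), and then conclude adjointability from the classical fact that smooth, uniformly compactly supported leafwise kernels define bounded adjointable operators between the maximal groupoid Hilbert modules. The only difference is one of detail: the paper delegates both the finite-propagation step and the final kernel estimate to the cited references, while you sketch them explicitly.
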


\begin{proof}\
 To see that $\Psi_f^\phi:= \Psi_f\circ \phi(\Delta')$ extends to an adjointable operator on Hilbert modules, we compute its Schwartz kernel. Let $k_\phi$ denote the Schwartz kernel of $\phi(\Delta')$. Since $\phi$ is chosen such that its Fourier transform is smooth compactly supported, the kernel $k_\phi$ is a smooth section with compact support over $\mathcal{G}'$ \cite{Roe}. We set
$$
K_f^\phi(v,\gamma') = ^t f_{\ast,v}\circ k_\phi(\gamma') \quad \in Hom(E'_{s(\gamma')}, E_v).
$$
Then for any $\omega'\in C_c^\infty (\cG'_{X'}, r^*E')$, we have
$$
\Psi_f^\phi (\omega') (v, \gamma') = \int_{\cG'_{s(\gamma')}} K_f^\phi(v,\gamma'{\gamma'}_1^{-1}) \omega'({\gamma'}_1^{-1}) d\lambda_{s(\gamma')} (\gamma'_1).
$$
Here  $d\lambda_{x'}$ is the $\cG'$-invariant Haar system pulled back from the Borel measure $\alpha'$ on the leaves.
 Since $k_\phi$ is smooth with compact support in $\mathcal{G}'$,   $K_\phi^f$ also is smooth with compact support in $\mathcal{G}^V_{X'}(f)$. Then classical arguments show that $\Psi^f_\phi$ extends to an adjointable operator as claimed.
\end{proof}
Recall from \cite{BenameurHeitschLHE} that a leafwise homotopy equivalence is always uniformly proper.
We are now in position to define a pull-back map associated with the leafwise oriented leafwise smooth map $f: (V,\maF) \rightarrow (V', \maF')$  (satisfying \ref{Etale}) and with respect to $\phi$ and to the leafwise metric defining $\Delta'$.

\begin{definition}
Let $\phi$ be the Fourier transform of an element of $ C_c^\infty(\R)$ such that $\phi(0)=1$. Let as before $X$ and $X'$ be complete transversals for $(V,\maF)$ and $(V',\maF')$ respectively. Then the pull-back map by $f$ associated with $\phi$ is the adjointable $C^*({\cG'}_{X'}^{X'})$-linear operator
$$
f_\phi^* : = \ep_f \circ \Psi_f^\phi =\nu_f^{-1}\circ  \Phi_f \circ \Psi_f \circ \phi(\Delta'): \maE_{X', E'} \longrightarrow \maE_{X,E}\otimes_{C^*(\cG_X^X)} \maE_{X'}^X(f).
$$
\end{definition}

\begin{lemma}\label{Independant}
\begin{enumerate}
 \item For any function $\phi$ which is the Fourier transform of an element of $C_c^\infty (\R)$ and which satisfies $\phi(0)=1$, the operator $\phi(\Delta): \maE_{X, E} \to \maE_{X,E}$ is an adjointable chain map which induces the identity on cohomology.
\item The two adjointable chain maps $f_\phi^*$ and $(\phi(\Delta)\otimes id)\circ \ep_f\circ \Psi_f$ induce the same map on cohomologies.
\end{enumerate}
\end{lemma}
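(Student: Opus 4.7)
The plan is to combine bounded continuous functional calculus on the regular self-adjoint operators $\Delta,\Delta'$ with an explicit Hodge-type chain homotopy, and then to upgrade the resulting identities from smooth forms to the Hilbert-module level via Schwartz-kernel smoothing arguments.

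For (1), $\phi(\Delta)$ is adjointable by the bounded continuous functional calculus applied to $\Delta$. The chain-map property $\phi(\Delta)\,\td=\td\,\phi(\Delta)$ follows because $\Delta=\td\td^*+\td^*\td$ commutes with $\td$ on the core $C_c^\infty(\maG_X, r^*E)$ (since $\td^2=0$), so $\td$ and $\Delta$ strongly commute as closed operators, and hence $\td$ commutes with every bounded Borel function of $\Delta$. For the cohomology claim, use $\phi(0)=1$ to write
$$\phi(\lambda)\;=\;1+\lambda\psi(\lambda),\qquad \psi(\lambda)\;:=\;\int_0^1 \phi'(s\lambda)\,ds,$$
which is smooth, and $(1+\lambda)^{1/2}\psi(\lambda)$ is bounded on $\R$ because $\phi$ is Schwartz. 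Consequently
$$\td^*\psi(\Delta)\;=\;\bigl[\td^*(1+\Delta)^{-1/2}\bigr]\circ\bigl[(1+\Delta)^{1/2}\psi(\Delta)\bigr]$$
is adjointable, the first factor being bounded by regularity of $\td^*$. We then have
$$\phi(\Delta)-I\;=\;\Delta\psi(\Delta)\;=\;\td\circ(\td^*\psi(\Delta))+(\td^*\psi(\Delta))\circ\td,$$
which is the desired chain homotopy to $I$.

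For (2), let $H:=\td^*\psi(\Delta)$ and $H':=(\td')^*\psi(\Delta')$ be the chain homotopies from (1) on $\maE_{X,E}$ and $\maE_{X',E'}$ respectively. Combined with $\Psi_f\circ\td'=\td^f\circ\Psi_f$ and the earlier proposition on $\Phi_f,\nu_f$, the composition $\ep_f\circ\Psi_f$ is a chain map from $(\maE_{X',E'},\td')$ to $(\maE_{X,E}\otimes\maE_{X'}^X(f),\td\otimes I)$ on smooth compactly supported sections. Substituting the chain-homotopy formula from (1) into both $\phi(\Delta')$ and $\phi(\Delta)\otimes I$, and using the chain-map identity $\ep_f\Psi_f\,\td'=(\td\otimes I)\,\ep_f\Psi_f$ to commute the $\td$'s past $\ep_f\Psi_f$, one obtains
$$f_\phi^*-(\phi(\Delta)\otimes I)\circ\ep_f\Psi_f\;=\;(\td\otimes I)\circ L+L\circ\td',$$
with $L:=\ep_f\Psi_f\,H'-(H\otimes I)\,\ep_f\Psi_f$. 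Hence both maps induce the same map on cohomology.

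The main technical obstacle is that $\Psi_f$ alone is unbounded, so that $L$ is a priori only defined on smooth compactly supported sections. The resolution parallels the proof that $\Psi_f^\phi$ is adjointable: each term in $L$ is composed with a smoothing operator $\psi(\Delta)$ or $\psi(\Delta')$, so a Schwartz-kernel computation produces a smooth compactly supported kernel on $\maG_{X'}^V(f)$, and the resulting operator extends adjointably to the full Hilbert module. Combined with the standard fact that cohomology classes on either side are represented by smooth compactly supported forms (which is consistent with $\phi(\Delta')$-smoothing being cohomologous to the identity by part (1)), the chain-homotopy identity descends to cohomology, yielding (2).
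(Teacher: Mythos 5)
Your part (1) is correct and in fact cleaner than the paper's argument: where the paper approximates $\psi=(\phi-1)/\lambda$ by functions $\alpha_n$ with compactly supported Fourier transform and approximates a closed form $\omega$ by smooth compactly supported $\omega_n$, you produce a global adjointable chain homotopy $H=\td^*\psi(\Delta)$ directly, with adjointability secured by factoring through $(1+\Delta)^{-1/2}$. The one point you pass over too quickly is the commutation $\td\,\phi(\Delta)=\phi(\Delta)\,\td$: commutation on a common core does not in general imply strong commutation of closed operators, and the paper is more careful here, first establishing $d_X(zI-\Delta_X)^{-1}=(zI-\Delta_X)^{-1}d_X$ and then using a contour-integral functional calculus for $\phi(\Delta_X)$; your argument should be routed through the resolvent in the same way.

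The genuine gap is in part (2). Your homotopy operator $L=\ep_f\Psi_f H'-(H\otimes I)\ep_f\Psi_f$ is algebraically correct on smooth compactly supported sections, but the claim that ``each term in $L$ is composed with a smoothing operator'' and therefore extends adjointably fails for the second term: in $(H\otimes I)\circ\ep_f\circ\Psi_f$ the unbounded operator $\Psi_f$ is applied \emph{first}, to an arbitrary element of $\maE_{X',E'}$, and left composition with $H\otimes I$ cannot repair this, since the unboundedness of $\Psi_f$ is a failure to control its input, not its output. Even for the first term, $H'=(\td')^*\psi(\Delta')$ is bounded adjointable but $\hat\psi$ is not in $C_c^\infty(\R)$ (it acquires a jump at the origin from the constant $1$ in $\phi=1+\lambda\psi$), so the Schwartz-kernel argument that makes $\Psi_f\circ\phi(\Delta')$ adjointable does not apply verbatim. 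Your fallback --- that every cohomology class is represented by a smooth compactly supported form, so the identity descends --- is asserted rather than proved, is not obvious for the unreduced cohomology of these Hilbert-module complexes, and would in any case still require checking that a map defined only on smooth representatives sends $\Im(d'_{X'})$ (computed on the full domain) into $\Im(\td\otimes id)$. The paper avoids all of this with a two-line sandwich: the adjointable operators $(\phi(\Delta)\otimes id)\circ f_\phi^*$ and $\bigl[(\phi(\Delta)\otimes id)\circ\ep_f\circ\Psi_f\bigr]\circ\phi(\Delta')$ coincide, and by part (1) pre- or post-composing with $\phi(\Delta')$ or $\phi(\Delta)\otimes id$ does not change the induced map on cohomology, so no homotopy operator involving a bare $\Psi_f$ is ever needed. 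You should either adopt that argument or supply the missing boundedness of $L$.
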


\begin{proof}
\begin{enumerate}
\item
 As $\phi$ has compactly supported Fourier transform, it is easy to check that $\Im(\phi(\Delta_X))\subseteq Dom( d_X)$. Furthermore, since the Fourier
transform of a compactly supported smooth function is an entire function whose restriction to $\R$ is Schwartz, we get that $\phi$ is entire. Then,
following the arguments in  \cite{HeitschLazarov}, we consider the holomorphic functional calculus for the self-adjoint regular operator $\Delta_X$,
which makes sense as the resolvent map $z \mapsto (zI-\Delta_X)^{-1}$ is analytic on the resolvent of $\Delta_X$ in $\C$ (cf. Result 5.23 in
\cite{Kustermans}). Therefore, choosing a curve $\gamma$ in $\C$ that does not intersect $\R^+$ and surrounds it, as in  \cite{HeitschLazarov}, one can write
$$
 \phi(\Delta_X)= \frac{1}{2\pi i} \int_{\gamma} \phi(z) (zI-\Delta_X)^{-1} dz .
 $$
Now, for $z \in \C$ in the resolvent of $\Delta_X$, we have $(zI-\Delta_X)^{-1} d_X =d_X(zI-\Delta_X)^{-1}$ and thus $\phi(\Delta_X)d_X= d_X\phi(\Delta_X)$, more precisely, the image of the adjointable operator $\phi(\Delta_X)$ is contained in the domain of $d_X$ and the two operators  $\phi(\Delta_X)d_X$ and $ d_X\phi(\Delta_X)$ extend to adjointable operators which coincide on the domain of $d_X$ and hence coincide. Similar arguments show that $\phi(\Delta_X)\delta_X= \delta_X\phi(\Delta_X)$.

  Now to show that $\phi(\Delta_X)$ induces the identity map on   cohomology we proceed as follows. As $\phi$ is entire with $\phi(0)=1$, the function $\psi$ given by $\psi(x)= \frac{\phi(x)-1}{x}$ is also entire and in particular smooth on $\R$. Now, there exists a sequence of Schwartz functions with compactly supported Fourier transforms $(\alpha_n)_{n\in \N}$ such that $\alpha_n \xrightarrow{n\rightarrow \infty} \psi$ in the $\|\bullet \|_{\infty}$ norm. Consequently, if $v\in \mathcal{E}^k_c$ for $k= 0,1,2,...,p$, we get
$$
\alpha_n(\Delta_X)v\xrightarrow{n\rightarrow \infty} \psi(\Delta_X)v \text{ and } d_X(\alpha_n(\Delta_X)v) = \alpha_n(\Delta_X)(d_Xv)\xrightarrow{n\rightarrow \infty} \psi(\Delta_X)(d_Xv).
$$
Therefore $\psi(\Delta_X)v \in Dom(d_X)$ and $\psi(\Delta_X)d_X= d_X\psi(\Delta_X)$ on $\mathcal{E}^k_c$.

Now let $\omega\in \Ker(d_X)$. Then there exists a sequence $(\omega_n)_{n\geq 0}$ such that each $\omega_n$ is a compactly supported smooth form, $\omega_n \xrightarrow{n\rightarrow \infty} \omega$, and $d_X\omega_n\rightarrow 0$ in the Hilbert modules. We then have,
\begin{eqnarray}
\label{phidelta}
   &&\phi(\Delta_X)\omega_n -\omega_n= \psi(\Delta_X)\Delta_X(\omega_n) = \\
   &=& \psi(\Delta_X)(d_X\delta_X \omega_n) + \psi(\Delta_X)(\delta_X d_X \omega_n)   \text { (Notice $\Im (d_X|_{\mathcal{E}^k_c}) \subseteq \mathcal{E}^{k+1}_c, \Im(\delta_X|_{\mathcal{E}^k_c}) \subseteq \mathcal{E}^{k-1}_c$)}\nonumber\\
   &=& d(\psi(\Delta_X )\delta_X \omega_n) + \psi(\Delta_X)\delta_X (d_X\omega_n)   \nonumber\text{   (by (ii) above) }
\end{eqnarray}
But, on compactly supported smooth forms, we have
\begin{eqnarray*}
\psi(\Delta_X)\circ \delta_X &=& \psi(\Delta_X) [(I+\Delta_X)(I+\Delta_X)^{-1}] \delta_X \\
 &=& [\psi(\Delta_X) (I+\Delta_X)][(I+\Delta_X)^{-1} \delta_X ] \text{     (since $\Im(I+\Delta_X)^{-1}\subseteq Dom(I+\Delta_X)$) } \\
 &=& [\psi(\Delta_X)+\phi(\Delta_X)-I] [(I+\Delta_X)^{-1} \delta_X ]
\end{eqnarray*}
Now $\psi(\Delta_X)+\phi(\Delta_X)-I$ is clearly adjointable as $\phi$ and $\psi $ are bounded smooth functions and  $(I+\Delta_X)^{-1} \delta_X $ is adjointable because it is a pseudo-differential operator of negative order \cite{Vassout}. Hence $\psi(\Delta_X)\circ \delta_X$ is an adjointable operator.

We thus get
$$
\psi(\Delta_X)\delta_X (\omega_n)\xrightarrow{n\rightarrow \infty} \psi(\Delta_X)(\delta_X\omega)  \text{ and } \psi(\Delta_X)\delta_X (d\omega_n)\xrightarrow{n\rightarrow \infty} 0.
 $$
Hence by $\ref{phidelta}$, we get
$$
 \psi(\Delta_X )\delta_X (\omega_n)\xrightarrow{n\rightarrow \infty} \psi(\Delta_X)(\delta_X\omega),
 $$
and
$$
d(\psi(\Delta_X )\delta_X \omega_n)) =  \phi(\Delta_X)\omega_n -\omega_n - \psi(\Delta_X)\delta_X (d_X\omega_n) \xrightarrow{n\rightarrow \infty} \phi(\Delta)\omega-\omega$$
Thus the above two limits together imply
$$
\psi(\Delta_X) \delta_X \omega\in Dom(d_X)\text{ and }\phi(\Delta_X)\omega -\omega = d_X(\psi(\Delta)\delta \omega) \subseteq \Im(d_X).
$$
So $\phi(\Delta_X)\omega -\omega=0$ on   cohomology and $\phi(\Delta_X)$ is the identity map on   cohomology.
\item We may compose the adjointable chain map $f_\phi^*$ on the left by the chain map $\phi(\Delta)\otimes id$ and get an adjointable chain map which induces, by the first item, the same map as $f_\phi^*$ on cohomlogies. In the same way, we can compose $(\phi(\Delta)\otimes id)\circ \ep_f\circ \Psi_f$ on the right by the chain map $\phi (\Delta')$ and get an adjointable chain map which induces, by the first item again, the same map as $(\phi(\Delta)\otimes id)\circ \ep_f\circ \Psi_f$ on cohomologies. This completes the proof of the second item.
\end{enumerate}
\end{proof}

We summarize our results in the following proposition.

\begin{theorem}\label{pullback}
The pull-back map $f_\phi^*$ is an adjointable operator  which is a chain map between the de Rham Hilbert-Poincar\'e complexes. Moreover, The map induced by $f_\phi^*$ on cohomology does not depend on the choices of $\phi$ and of the leafwise metric on $(V', \maF')$.
\end{theorem}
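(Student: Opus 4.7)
The plan is to piece the theorem together from components already in place. The operator $f_\phi^*$ is the composition $\ep_f\circ \Psi_f^\phi$; the factor $\ep_f = \nu_f^{-1}\circ\Phi_f$ is an isometric (hence adjointable) isomorphism of Hilbert modules by Proposition \ref{Iso1}, and $\Psi_f^\phi$ has already been shown to extend to an adjointable operator from $\maE_{X',E'}$ to $\maE_{X',E}(f)$, so $f_\phi^*$ is adjointable as a composition of adjointable operators.

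For the chain-map property I would verify that each factor intertwines the appropriate differentials. A holomorphic functional calculus argument identical to the first item of Lemma \ref{Independant}, applied to the regular self-adjoint Laplacian $\Delta'$, shows that $\phi(\Delta')$ is an adjointable chain map on $(\maE_{X',E'}, d_{X'})$; the map $\Psi_f$ satisfies $\td^f\circ\Psi_f = \Psi_f\circ\td'$ by the proposition proved earlier; and $\Phi_f$ and $\nu_f$ were shown to be chain maps, hence so is $\nu_f^{-1}$. Composing these intertwinings yields $(d_X\otimes\mathrm{id})\circ f_\phi^* = f_\phi^* \circ d_{X'}$ on the dense smooth subspace, and the equality propagates to the maximal closures since both sides are adjointable.

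For the independence statements I would invoke Lemma \ref{Independant}(2): on cohomology the class of $f_\phi^*$ agrees with that of $(\phi(\Delta)\otimes\mathrm{id})\circ\ep_f\circ\Psi_f$. For two admissible functions $\phi_1, \phi_2$, Lemma \ref{Independant}(1) implies that each $\phi_i(\Delta)$ induces the identity on $H^*(\maE_{X,E}, d_X)$, so $\phi_1(\Delta)\otimes\mathrm{id}$ and $\phi_2(\Delta)\otimes\mathrm{id}$ induce the same map on the tensor-product cohomology; post-composing with the chain map $\ep_f\circ\Psi_f$ then gives $[f_{\phi_1}^*] = [f_{\phi_2}^*]$. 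Independence from the leafwise metric on $(V',\maF')$ is even more direct: changing this metric alters $\Delta'$ and hence $\Psi_f^\phi = \Psi_f\circ\phi(\Delta')$, but by the same lemma the induced cohomology map of $f_\phi^*$ coincides with that of $(\phi(\Delta)\otimes\mathrm{id})\circ\ep_f\circ\Psi_f$, an expression in which $\Delta'$ does not appear.

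The genuine technical obstacle has already been handled inside the proof of Lemma \ref{Independant}(1): establishing that the smoothing operator $\phi(\Delta)$ is an adjointable chain map that induces the identity on cohomology requires a careful holomorphic functional calculus on the regular self-adjoint Laplacian $\Delta_X$ on the Hilbert module, combined with approximation of the entire function $(\phi(x)-1)/x$ by Schwartz functions with compactly supported Fourier transform. Once that lemma is in hand, the present theorem is a formal consequence, as outlined above.
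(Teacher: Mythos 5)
Your proposal is correct and, for the most part, follows the paper's own strategy: the adjointability and chain-map assertions are assembled exactly as you describe from Proposition \ref{Iso1}, the adjointability of $\Psi_f^\phi$, and the intertwining relations for $\Psi_f$, $\Phi_f$ and $\nu_f$ (the paper simply says ``only the last part of the statement needs to be proved''), and the independence of the leafwise metric is read off from Lemma \ref{Independant}(2) just as you do. The one place where you genuinely diverge is the independence of $\phi$. You compare $f_{\phi_1}^*$ and $f_{\phi_2}^*$ by invoking Lemma \ref{Independant}(2) twice and then arguing that $\phi_1(\Delta)\otimes\mathrm{id}$ and $\phi_2(\Delta)\otimes\mathrm{id}$ agree on the tensor-product cohomology; this forces you to manipulate the merely densely defined, non-adjointable map $\ep_f\circ\Psi_f$ (recall the paper's remark that $\Psi_f$ is not even regular) and tacitly uses that the chain homotopy $\phi(\Delta_X)-I=d_X(\psi(\Delta_X)\delta_X)+\psi(\Delta_X)\delta_X d_X$ of Lemma \ref{Independant}(1) survives tensoring with the identity, since cohomology does not commute with $\otimes$ in general. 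The paper instead observes that $\phi(\Delta')$ and $\psi(\Delta')$ commute, so that $f_\phi^*\circ\psi(\Delta')=f_\psi^*\circ\phi(\Delta')$ as adjointable operators, and then applies Lemma \ref{Independant}(1) on the source side only; every operator in sight stays adjointable. Both routes work, but the paper's is the more economical one, and if you keep your version you should make the two points above (adjointability after smoothing, and transfer of the chain homotopy across the tensor product) explicit.
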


\begin{proof}
Only the last part of the statement needs to be proved.
From the second item of Lemma \ref{Independant}, we see that the map induced by $f_\phi^*$ does not depend on the leafwise metric on $(V',\maF')$.
Now, assume that $\psi$ is another function which is the Fourier transform of an element of $C_c^\infty(\R)$ and which satisfies $\psi (0)=1$.
Then $\phi\psi$ satisfies the same conditions as $\phi$ and $\psi$ and we have
$$
f_\phi^* \circ \psi (\Delta') = f_\psi^* \circ \phi (\Delta').
$$
By the first item of Lemma \ref{Independant}, $\psi (\Delta')$  and $\phi (\Delta')$ are  chain maps which induce the identity on cohomologies, therefore  $f_\phi^*$ and  $f_\psi^*$ induce the same map on cohomologies.
\end{proof}

\subsection{Functoriality of the pull-back}

Recall that $f:(V,\maF)\to (V', \maF')$ is a leafwise oriented smooth map which satisfies Assumption \ref{Etale}.
 We have fixed complete transversals $X$ and $X'$ for the foliations $(V, \maF)$ and $(V', \maF')$ respectively.
Let $g: (V',\maF')\to (V'', \maF'')$ be another leafwise oriented smooth map, also satisfying \ref{Etale}, and $X''$ a complete transversal in $(V'', \maF'')$. Inorder to work with adjointable operators, we shall also assume that $f$ and $g$ are uniformly proper maps between the two foliations. We defined the pull-back adjointable operators associated with a fixed nice cutoff function $\phi$:
$$
f_\phi^*: \maE_{X', E'} \rightarrow \maE_{X,E}\otimes \maE_{X'}^X(f) \text{ and } g_\phi^*: \maE_{X'', E''} \rightarrow \maE_{X',E'}\otimes \maE_{X''}^{X'}(g).
$$
We also proved that $f_\phi^*$ and $g_\phi^*$ are chain maps between the de Rham Hilbert-Poincar\'e complexes. The goal of this subsection is to prove the expected functoriality property.

We denote by $\Xi_{g,f}^{X'}$ the isomorphism
$$
\Xi_{g,f}^{X'}:  \maE_{X''}^{X} (g\circ f) \longrightarrow   \maE_{X'}^X(f) \otimes_{C^*({\cG'}_{X'}^{X'})} \maE_{X''}^{X'}(g),
$$
described in Proposition \ref{composition}. Notice that this is a bimodule isomorphism. Then we can state:
\begin{theorem}\label{Functoriality}
The following diagram of adjointable chain maps induces a commutative diagram between the corresponding $C^*({\cG'}_{X'}^{X'})$ Hilbert-Poincar\'e de Rham cohomologies

\medskip
\begin{picture}(415,80)
\put(105,60){$\maE_{X'', E''}$}
\put(120,50){ $\vector(0,-1){25}$}
\put(45,10){$\maE_{X, E}\otimes_{C^*({\cG}_{X}^{X})} \maE_{X''}^{X} (g\circ f)$}
\put(75,38){$(g\circ f)_\phi^*$}
\put(183,70){$g_\phi^*$}
\put(160,64){\vector(1,0){70}}
\put(160,13){\vector(1,0){70}}
\put(163,25){$id \otimes_{C^*({\cG}_{X}^{X})} \Xi^{X}_{g,f}$}
\put(240,60){$\maE_{X',E'}\otimes_{C^*({\cG'}_{X'}^{X'})}\maE_{X''}^{X'}(g)$}
\put(260,50){ $\vector(0,-1){25}$}
\put(230,10){$\maE_{X, E}\otimes_{C^*({\cG}_{X}^{X})} \maE_{X'}^X(f) \otimes_{C^*({\cG'}_{X'}^{X'})} \maE_{X''}^{X'}(g).$}
\put(270,38){$f_\phi^*\otimes_{C^*({\cG'}_{X'}^{X'})} id $}
\end{picture}
\vspace{-2cm}
\begin{Equation}\label{CommutativeDiagram}
\end{Equation}
\vspace{1.5cm}\noindent
\end{theorem}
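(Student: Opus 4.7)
The plan is to reduce the statement to a classical chain-rule identity $(g\circ f)^\ast = f^\ast \circ g^\ast$ for the naive pull-back of leafwise differential forms, and then absorb the functional-calculus smoothing operators $\phi(\Delta)$, $\phi(\Delta')$, $\phi(\Delta'')$ using Lemma \ref{Independant}. The heart of the argument lives in the smooth picture, while the Hilbert-module completions and the bimodule re-identification $\Xi_{g,f}^{X'}$ only serve to match sources and targets.

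First, working purely with compactly supported smooth sections, I would check the identity
$$
\Phi_{g\circ f}\circ \Psi_{g\circ f} \;=\; (\text{composition through }\Phi_f,\Phi_g,\Psi_f,\Psi_g)
$$
after transport by the canonical diffeomorphisms. Concretely, $\Psi_f$ is the transpose of the leafwise differential of $f$ composed with the appropriate lift, so the functoriality of pull-back of leafwise forms (i.e. ${}^t(g\circ f)_{\ast,v} = {}^tf_{\ast,v}\circ{}^tg_{\ast,f(v)}$) gives $\Psi_{g\circ f} = \Psi_f \circ \Psi_g$ once one identifies the relevant reduced-graph manifolds via the diffeomorphisms $\phi$ of Proposition \ref{composition} and its two-step analogue $\maG_X \times_{\maG_X^X}\maG_{X'}^X(f)\times_{{\maG'}_{X'}^{X'}}\maG_{X''}^{X'}(g) \cong \maG_{X''}^V(g\circ f)$. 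This identification is precisely what the isomorphism $\Xi_{g,f}^{X'}$ encodes at the level of Hilbert bimodules, so combining with Proposition \ref{Iso1} gives on the smooth level
$$
(\nu_{g\circ f}^{-1}\circ \Phi_{g\circ f}\circ \Psi_{g\circ f}) \;=\; (\mathrm{id}\otimes \Xi_{g,f}^X)^{-1}\circ \bigl(\nu_f^{-1}\Phi_f\Psi_f \otimes \mathrm{id}\bigr)\circ (\nu_g^{-1}\Phi_g\Psi_g),
$$
i.e. $\ep_{g\circ f}\circ \Psi_{g\circ f} = (\mathrm{id}\otimes \Xi_{g,f}^X)^{-1}\circ (\ep_f\otimes \mathrm{id})\circ \ep_g\circ \Psi_f\circ \Psi_g$ (with $\Psi_f$ acting on the first tensor factor in the natural sense).

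Next, I would insert the functional-calculus factors. By definition,
$$
(f_\phi^\ast \otimes \mathrm{id})\circ g_\phi^\ast \;=\; \bigl(\ep_f\circ \Psi_f\circ \phi(\Delta') \otimes \mathrm{id}\bigr)\circ \bigl(\ep_g\circ \Psi_g\circ \phi(\Delta'')\bigr),
$$
while $(g\circ f)_\phi^\ast = \ep_{g\circ f}\circ \Psi_{g\circ f}\circ \phi(\Delta'')$. The discrepancy between the two sides is the intermediate factor $\phi(\Delta')$, which has no counterpart on the $(g\circ f)$-side. Here Lemma \ref{Independant} is decisive: item (2) tells us that, on cohomology, we may replace $f_\phi^\ast$ by $(\phi(\Delta)\otimes \mathrm{id})\circ \ep_f\circ \Psi_f$ (and similarly for $g_\phi^\ast$, $(g\circ f)_\phi^\ast$). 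After this replacement, all appearances of $\phi(\Delta)$, $\phi(\Delta')$, $\phi(\Delta'')$ are multiplications on the left by chain maps that induce the identity on cohomology by item (1) of the same lemma, and may therefore be freely moved through the chain maps or deleted on cohomology classes.

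Combining these two reductions, the commutativity of diagram \eqref{CommutativeDiagram} on cohomology follows from the smooth-level identity established in the first step. The main obstacle, as in Lemma \ref{Independant} itself, is a domain question: the map $\Psi_f$ is not adjointable, so the intermediate compositions $\Psi_f\circ \phi(\Delta')\otimes \mathrm{id}\circ \Psi_g$ must be handled on the dense domain of smooth compactly supported sections, exploiting that $\phi(\Delta')$ preserves this smooth domain (its Schwartz kernel is smooth with compact leafwise support) and that cohomology classes may be represented by such smooth cocycles. Once this is checked, the reshuffling of the $\phi$-factors using analyticity of $\phi$ and the holomorphic functional calculus, exactly as in the proof of Lemma \ref{Independant}, closes the argument.
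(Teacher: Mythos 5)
Your proposal is correct and follows essentially the same route as the paper: the paper likewise first establishes the smooth-level identity $(\ep_f\Psi_f\otimes id)\circ \ep_g\Psi_g = (id\otimes \Xi_{g,f}^{X'})\circ \ep_{g\circ f}\circ \Psi_{g\circ f}$ on compactly supported sections (via the chain rule ${}^t(g\circ f)_{\ast,v}={}^tf_{\ast,v}\circ{}^tg_{\ast,f(v)}$ and the compatibility of the various module identifications, its Lemmas \ref{comdiag1}--\ref{comdiag2} and Proposition \ref{SmoothCommutative}), and then disposes of the extra $\phi(\Delta')$ factor exactly as you do, by invoking both items of Lemma \ref{Independant} to move the smoothing operators to the outside and delete them on cohomology. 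Your remark that the non-adjointability of $\Psi_f$ forces the intermediate computation to live on the dense smooth domain is precisely the caveat the paper makes before stating Proposition \ref{Lambda}.
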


We devote the rest of this subsection to the proof of this theorem.
Indeed, we shall prove more precisely that the map $((\ep_g \circ \Psi_g)\otimes_{C_c^\infty(\cG_X^X)} id)\circ (\ep_f \circ \Psi_f)$ coincides
exactly with $\ep_{f\circ g} \circ \Psi_{f\circ g}$ on smooth compactly supported sections, when we identify
{$\maE_{X'}^{X} (f) \otimes \maE_{X''}^{X'} (g)$ with $\maE_{X''}^{X} (g\circ f)$ }. For simplicity, we shall denote for an operator $T$ and for an algebra $A$ by $T\otimes id$ the expression $T\otimes_{A} id$, when the algebra $A$ is clearly understood and no confusion can occur. Even if the maps $\Psi_f$ and $\Psi_g$ are not adjointable, we shall restrict to smooth compactly supported sections and by using eventually the regularization $\phi(\Delta')$ or $\phi(\Delta)$, we shall easily deduce the corresponding results at the level of Hilbert module completions.

{
\begin{proposition}\label{Lambda}
There exists an isometric isomorphism of Hilbert modules
$$
\maE_{X', E}^{V}(f) \otimes_{C^*({\cG'}_{X'}^{X'})} \maE_{X''}^{X'} (g) \longrightarrow \maE_{X, E''}^{V''} (g\circ f).
$$
\end{proposition}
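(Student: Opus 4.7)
The plan is to adapt the argument of Proposition \ref{composition} to the case where the source transversal is replaced by the whole manifold $V$. First I would establish a diffeomorphism
$$
\Theta: \mathcal{G}_{X'}^V(f) \times_{\mathcal{G}'^{X'}_{X'}} \mathcal{G}_{X''}^{X'}(g) \;\xrightarrow{\;\cong\;}\; \mathcal{G}_{X''}^V(g\circ f),\qquad [(v,\gamma');(x',\gamma'')]\longmapsto (v,\, g(\gamma')\gamma''),
$$
by literally the same argument as in item (1) of Proposition \ref{composition}: well-definedness and smoothness descend from the obviously smooth map on the fibered product, injectivity follows from solving $\alpha'=\gamma'^{-1}_2\gamma'_1\in \mathcal{G}'^{X'}_{X'}$, and surjectivity from $X'$ being a complete transversal for $(V',\maF')$. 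Note $\Theta$ is $\mathcal{G}_X^X$-equivariant in the obvious way.

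Next I would define, on compactly supported smooth sections, the convolution map
$$
\Lambda\bigl(\xi\otimes\eta\bigr)(v,\beta'')
:=\sum_{\beta'\in\mathcal{G}'^{f(v)}_{X'}} \xi(v,\beta')\,\eta\bigl(s(\beta'),\,g(\beta'^{-1})\beta''\bigr),
$$
for $\xi\in C_c^\infty(\mathcal{G}_{X'}^V(f),\pi_1^\ast E)$ and $\eta\in C_c^\infty(\mathcal{G}_{X''}^{X'}(g))$. The sum is locally finite by compactness of supports, and continuity/smoothness of the result is clear using the diffeomorphism $\Theta$ above. The calculation in item (2) of Proposition \ref{composition}, performed mutatis mutandis (only the first coordinate changes from $x\in X$ to $v\in V$, which plays no role in the manipulations), shows that $\Lambda$ is $C_c^\infty(\mathcal{G}'^{X'}_{X'})$-balanced, so it descends to the algebraic tensor product over $C_c^\infty(\mathcal{G}'^{X'}_{X'})$.

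Then I would check that $\Lambda$ preserves the $C^*(\mathcal{G}'^{X''}_{X''})$-valued inner product, i.e.
$$
\bigl\langle \Lambda(\xi_1\otimes\eta_1),\,\Lambda(\xi_2\otimes\eta_2)\bigr\rangle
\;=\;
\bigl\langle \eta_1,\,\langle \xi_1,\xi_2\rangle\,\eta_2\bigr\rangle.
$$
This is again a Fubini-type bookkeeping exercise analogous to the proof of Proposition \ref{Iso1} for $\nu_f$: one expands both sides, uses the definitions (\ref{innprod}) and (\ref{rtact}) and the formula for $\langle\cdot,\cdot\rangle$ on $\maE_{X',E}^V(f)$, and rearranges the sums over $\mathcal{G}'^{X'}_{X'}$--orbits using Assumption \ref{Etale} (which guarantees the relevant indexing sets are countable and the sums well-defined). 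Once isometry on the pre-Hilbert level is established, $\Lambda$ extends uniquely to an isometric morphism $\maE_{X',E}^V(f) \otimes_{C^*(\mathcal{G}'^{X'}_{X'})} \maE_{X''}^{X'}(g)\to \maE_{X'',E}^V(g\circ f)$ of Hilbert $C^*(\mathcal{G}'^{X''}_{X''})$-modules. Surjectivity on a dense subspace follows because, via $\Theta^{\ast}$, any element of $C_c^\infty(\mathcal{G}_{X''}^V(g\circ f),\pi_1^\ast E)$ is approximated by finite sums of elementary tensors $\xi\otimes\eta$ (partition of unity subordinate to a locally finite cover of $\Theta^{-1}(\mathrm{supp})$ by product-type charts).

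The step I expect to be the main obstacle is the inner product preservation: although formally parallel to (\ref{lhs5})--(\ref{rhs5}), one must keep track simultaneously of the leafwise integral over $V$ coming from $\langle\cdot,\cdot\rangle$ on $\maE_{X',E}^V(f)$, the discrete sum over the countable set $\{x\in X, f(x)= r(\gamma'_1)\}$ (which here is replaced by a sum indexed by $\mathcal{G}'^{f(v)}_{X'}$), and the $\mathcal{G}'^{X'}_{X'}$-equivariance used to move from the tensor product inner product to the convolution inner product; reindexing these summations coherently is the only delicate point.
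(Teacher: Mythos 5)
Your proof is correct, and its skeleton --- graph diffeomorphism, convolution map on compactly supported sections, isometry by direct expansion of the inner products, then surjectivity --- is exactly the skeleton the paper intends: the paper's own proof of this proposition is a two-line deferral to Proposition \ref{Iso}, which it says ``adapts immediately to the replacement of $X$ by $V$.'' Where you genuinely diverge is the surjectivity step. The paper, through Proposition \ref{Iso}, obtains surjectivity from the imprimitivity machinery: Proposition \ref{Compacts} identifies $C^*(\cG_X^X)$ with the compact operators of the module via $\pi_{g\circ f}$, and the identity $\pi_{g\circ f}(\eta_1\star\eta_2)\xi=\eta_1\ast(\eta_2\bullet\xi)$ then exhibits the dense subspace $\pi_{g\circ f}(C^*(\cG_X^X))\maE$ inside the range of the convolution map. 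That argument uses that $f$ is a leafwise homotopy equivalence. Your partition-of-unity argument instead exploits that the ${\cG'}_{X'}^{X'}$-action is free and proper and that $X'$ is a complete transversal, so that the quotient is locally a product over a small piece of $X'$; there one may even take $\eta$ identically equal to $1$ near the relevant point of $X'$, so elementary tensors reproduce exactly (not just approximately) any section supported in a small chart. This needs only Assumption \ref{Etale} and uniform properness, not the homotopy-equivalence hypothesis, and therefore fits the generality in which the functoriality subsection is actually stated. Two small points you should make explicit to close the argument: the local finiteness of the sum defining $\Lambda(\xi\otimes\eta)$ must be arranged so that only one $\beta'$ contributes on the chosen charts (possible by shrinking supports, using properness), and the passage from density on compactly supported sections to surjectivity on completions uses that for sections supported in a fixed compact set the Hilbert-module norm is dominated by the sup norm (via the $I$-norm bound on the maximal $C^*$-norm) together with the fact that the range of an isometry between complete modules is closed.
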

}

\begin{proof}
The proof is a straightforward generalization of the easier proof corresponding to the case where the hermitian bundles  $E$ and $E''$ are the
trivial line bundles. In this latter case, we give below  the proof of Proposition \ref{Iso} which adapts immediately to the replacement of $X$ by $V$,
and we thus leave the details of the precise modifications as an exercise.
\end{proof}

Denote by
{
$$
\Pi_{g,f}: \maE_{X,E''}^{V''} (g\circ f) \rightarrow \maE_{X', E}^{V}(f) \otimes_{C^*({\cG'}_{X'}^{X'})} \maE_{X''}^{X'} (g),
$$
}
the isomorphism described in Proposition \ref{Lambda}. Recall that $\Xi_{g,f}$ is the extension of a $C_c^\infty ({\cG'}_{X'}^{X'})$-linear map, still denoted $\Xi_{g,f}$, between the smooth compactly supported sections.
\begin{lemma}\label{comdiag1}
The following relation holds on $C_c^\infty (\cG'_{X'}, r^*E')$,
{
$$
\Pi_{g,f} \circ \Psi_{g\circ f}  = (\Psi_f\otimes id)\circ \ep_g\circ \Psi_g.
$$
}
\end{lemma}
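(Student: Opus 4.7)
The strategy is to verify the identity pointwise on smooth compactly supported sections, unwinding the definitions of $\Psi$, $\ep$, and $\Pi_{g,f}$; the core input is the chain rule
$${}^{t}(g\circ f)_{\ast,v} \;=\; {}^{t}f_{\ast,v}\circ {}^{t}g_{\ast,f(v)},$$
together with the compatibility between the fibered product identifications at the source and target of $\Pi_{g,f}$. Note that the statement really concerns sections $\omega''\in C_c^\infty(\maG''_{X''},r^\ast E'')$ (both sides start from such an $\omega''$); I will read the displayed formula in this sense.

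First I would make $\Pi_{g,f}$ explicit. By exactly the argument of Proposition \ref{composition}, carried out with the bundle twist $E$ along $V$ in place of the trivial line bundle on $X$, the isomorphism $\Pi_{g,f}$ is induced at the level of smooth compactly supported sections by the diffeomorphism
$$\chi: \maG_{X'}^V(f)\times_{{\maG'}_{X'}^{X'}} \maG_{X''}^{X'}(g)\;\xrightarrow{\cong}\; \maG_{X''}^V(g\circ f),\qquad [(v,\gamma');(x',\gamma'')]\mapsto (v, g(\gamma')\gamma''),$$
together with the $\nu$-style summation identification. Concretely, for $\sigma \in C_c^\infty(\maG_{X''}^V(g\circ f),\pi_1^\ast E)$, the tensor $\Pi_{g,f}(\sigma)$ is characterised by evaluating $\sigma\circ\chi$ on representatives $[(v,\gamma');(x',\gamma'')]$.

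Next I would evaluate both sides at such a representative. For the left-hand side, the definition of $\Psi_{g\circ f}$ gives
$$(\Pi_{g,f}\circ\Psi_{g\circ f})(\omega'')\bigl[(v,\gamma');(x',\gamma'')\bigr] \;=\; \Psi_{g\circ f}(\omega'')(v,g(\gamma')\gamma'') \;=\; {}^{t}(g\circ f)_{\ast,v}\bigl(\omega''(g(\gamma')\gamma'')\bigr).$$
For the right-hand side, $\Psi_g(\omega'')(v',\beta'')={}^{t}g_{\ast,v'}(\omega''(\beta''))$; then by the definition of $\Phi_g$ (formula \eqref{Phif}),
$$\bigl(\Phi_g\circ \Psi_g\bigr)(\omega'')\bigl[\gamma';(x',\gamma'')\bigr] \;=\; {}^{t}g_{\ast,r(\gamma')}\bigl(\omega''(g(\gamma')\gamma'')\bigr),$$
and $\nu_g^{-1}$ writes this as a tensor in $\maE_{X',E'}\otimes_{C^\ast({\maG'}^{X'}_{X'})}\maE^{X'}_{X''}(g)$. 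Applying $\Psi_f\otimes \mathrm{id}$ and tracing through the same fibered-product identification (this time for $f$ in place of the identity on the second factor), the resulting element is represented on $\maG_{X'}^V(f)\times_{{\maG'}_{X'}^{X'}}\maG^{X'}_{X''}(g)$ by
$${}^{t}f_{\ast,v}\Bigl({}^{t}g_{\ast,r(\gamma')}\bigl(\omega''(g(\gamma')\gamma''))\Bigr).$$
Since $r(\gamma')=f(v)$, the chain rule identifies this with ${}^{t}(g\circ f)_{\ast,v}(\omega''(g(\gamma')\gamma''))$, which matches the left-hand side.

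The main obstacle is not analytical but combinatorial/bookkeeping: one must check that the tensor $\Psi_f\otimes \mathrm{id}$, read through the isomorphisms $\Phi_f$ and $\nu_f$ of Proposition \ref{Iso1}, really does evaluate on $[(v,\gamma');(x',\gamma'')]$ by applying ${}^{t}f_{\ast,v}$ to the first-factor section restricted along $\pi_1=v$. Once this is carefully unpacked on a dense set of elementary tensors $\xi\otimes\eta$ coming from smooth compactly supported pieces, the computation above gives equality on all of $C_c^\infty(\maG''_{X''},r^\ast E'')$, and the lemma follows.
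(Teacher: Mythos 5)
Your proof is correct and follows essentially the same route as the paper: the authors likewise realize $\Pi_{g,f}$ through the diffeomorphism $[(v,\gamma');(s(\gamma'),\gamma'')]\mapsto(v,g(\gamma')\gamma'')$ (their $\Lambda_{g,f}$ composed with a $\nu$-type identification $\mu_{g,f}$), introduce a concrete model $\widehat\Psi_f$ for $\Psi_f\otimes\mathrm{id}$ on the fibered product, and reduce the identity to the pointwise chain rule ${}^{t}(g\circ f)_{\ast,v}={}^{t}f_{\ast,v}\circ{}^{t}g_{\ast,f(v)}$ using $r(\gamma')=f(v)$. Your observation that the displayed relation should be read on $C_c^\infty(\maG''_{X''},r^\ast E'')$ is also consistent with the paper's own computation, which evaluates on $\beta''\in C_c(\maG_{X''},r^\ast E'')$.
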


\begin{proof}\
First consider the diffeomorphism (the proof of this is analogous to that of Proposition \ref{composition}),
{
$$
\lambda: \mathcal{G}_{X'}^{V}(f)\times_{\mathcal{G}^{X'}_{X'}} \mathcal{G}^{X'}_{X''}(g)\rightarrow \mathcal{G}^{V}_{X''}(g\circ f)\text{ given by }
\lambda[(v,\gamma');(s(\gamma'),\gamma'')]=(v,{g}(\gamma')\gamma'').
$$
}
Then $\lambda$ induces the map
{
$$
\Lambda_{g,f}: C_c(\mathcal{G}^{V}_{X''}(g\circ f), \pi^\ast_1 E)\rightarrow  C_c(\mathcal{G}_{X'}^{V}(f)\times_{\mathcal{G}^{X'}_{X'}} \mathcal{G}^{X'}_{X''}(g), (\pi_1\circ pr_1)^\ast E).
$$
}

By arguing as in the proof of Proposition \ref{composition}, $\Lambda_{g,f}$ is an isometry which extends to an isometric isomorphism between the Hilbert modules. Similar arguments allow to construct, as for $\nu_f$ in the previous paragraph, an isometric isomorphism $\mu$ which restricts to
{
$$
\mu_{g,f}: C_c^\infty (\mathcal{G}_{X'}^{V}(f)\times_{\mathcal{G}^{X'}_{X'}} \mathcal{G}^{X'}_{X''}(g) , (\pi_1\circ pr_1)^*E) \longrightarrow C_c^\infty ({\cG}_{X'}^{V}(f), \pi_1^*E) \otimes_{C_c^\infty (\cG_{X'}^{X'})} C_c^\infty (\cG_{X''}^{X'} (g)).
$$
}
A direct inspection shows that
$$
\mu_{g,f}\circ \Lambda_{g,f} = \Pi_{g,f}.
$$
{
On the other hand, if $\alpha\in C_c^\infty (\mathcal{G}_{X'}\times_{\mathcal{G}_{X'}^{X'} } \mathcal{G}_{X''}^{X'}(g)),(r\circ pr_1)^\ast E')$,  then we set
$$
{\widehat\Psi_f} (\alpha)[(v,\gamma');(s(\gamma'),\gamma'')]= ^t{f_{\ast,v}}(\alpha[\gamma';(s(\gamma'),\gamma'')])
$$
}
and define in this way
{
$$
{\widehat\Psi_f} : C_c^\infty (\mathcal{G}_{X'}\times_{\mathcal{G}_{X'}^{X'}}\mathcal{G}_{X''}^{X'}(g),(r\circ \pi_1)^\ast E') \longrightarrow  C_c^\infty (\mathcal{G}_{X'}^{V}(f)\times_{\mathcal{G}^{X'}_{X'}} \mathcal{G}^{X'}_{X''}(g) , (\pi_1\circ pr_1)^*E)
$$
}
which corresponds to $\Psi_f\otimes id$ through the isomorphisms. More precisely, the following diagram commutes
{
\medskip
\begin{picture}(415,80)
\put(35,60){$C_c^\infty (\mathcal{G}_{X'}\times_{\mathcal{G}_{X'}^{X'}}\mathcal{G}_{X''}^{X'}(g),(r\circ \pi_1)^\ast E')$}
\put(115,50){ $\vector(0,-1){25}$}
\put(15,10){$C_c^\infty (\cG_{X'}, r^*E')\otimes_{C_c^\infty (\cG_{X'}^{X'})} C_c^\infty (\cG_{X''}^{X'}(g))$}
\put(85,38){$(\nu)_g^{-1}$}
\put(200,70){$\widehat\Psi_f$}
\put(180,64){\vector(1,0){50}}
\put(180,13){\vector(1,0){50}}
\put(180,25){$\Psi_f \otimes id$}
\put(240,60){$C_c^\infty (\mathcal{G}^{V}_{X'}(f)\times_{\mathcal{G}^{X'}_{X'}}\mathcal{G}^{X'}_{X''}(g) , (\pi_1\circ pr_1)^*E)$}
\put(280,50){ $\vector(0,-1){25}$}
\put(240,10){$C_c^\infty ({\cG}_{X'}^{V}(f), \pi_1^*E) \otimes_{C_c^\infty (\cG_{X'}^{X'})} C_c^\infty (\cG_{X''}^{X'} (g)).$}
\put(285,38){$\mu_{g,f} $}
\end{picture}
\vspace{-2cm}
\begin{Equation}
\end{Equation}
}
\vspace{1.5cm}\noindent
Thus it remains to show that, on smooth compactly supported forms, we have
{
$$
\Lambda_{g,f}\circ \Psi_{g\circ f} = \hat\Psi_f \circ \Phi_g\circ \Psi_g.
$$
}
Now, for $\alpha\in C_c^\infty (\mathcal{G}^{V}_{X''}(g\circ f),\pi^\ast_1 E)$, we have
{
$$
(\Lambda_{g,f} \alpha)[(v,\gamma');(s(\gamma'),\gamma'')]:= \alpha(v,{g}(\gamma')\gamma'') \quad \in E_{v}.
$$
}

Let then $\beta''\in C_c(\mathcal{G}_{X''},r^\ast E'')$. Then we have
{
\begin{eqnarray*}\label{lhs6}
(\hat\Psi_f\circ \Phi_g \circ \Psi_g)(\beta'')[(v,\gamma');(s(\gamma'),\gamma'')]&=& ^t{f_{\ast,{v}}}[\Psi_g(\beta'')(r(\gamma'),{g}(\gamma')\circ \gamma'')]\nonumber\\
&=& ^t{f_{\ast,{v}}}[^t{g}_{\ast,r(\gamma')}(\beta''({g}(\gamma')\gamma''))]\nonumber\\
&=&^t ({g_{\ast,f(v)}\circ f_{\ast,v}})(\beta''({g}(\gamma')\gamma''))\nonumber\\
&=& (^t{(g\circ f)_{\ast,v}})(\beta''({g}(\gamma')\gamma''))
\end{eqnarray*}
}
{
On the other hand,
\begin{eqnarray*}\label{rhs6}
(\Lambda_{g,f} \circ \Psi_{g\circ f})(\beta'')[(v,\gamma');(s(\gamma'),\gamma')]&=& \Psi_{g\circ f}(v,{g}(\gamma')\gamma'')\nonumber\\
&=& (^t{(g\circ f)_{\ast,v}})(\beta''({g}(\gamma')\gamma''))
\end{eqnarray*}
}
which completes the proof of the lemma.
\end{proof}

Next we prove the following

\begin{lemma}\label{comdiag2}

With the above notations, the following diagram commutes

{
\medskip
\begin{picture}(415,80)
\put(60,60){$C_c^\infty ({\mathcal{G}}_{X''}^{V} (g\circ f),\pi_1^\ast E)$}
\put(115,50){ $\vector(0,-1){25}$}
\put(30,10){$C_c^\infty (\cG_{X}, r^*E)\otimes C_c^\infty (\cG_{X''}^{X}(g\circ f))$}
\put(85,38){$\ep_{g\circ f}$}
\put(195,70){$\Pi_{g,f}$}
\put(190,64){\vector(1,0){30}}
\put(190,13){\vector(1,0){30}}
\put(185,25){$id \otimes \Xi_{g,f}^{X'}$}
\put(240,60){$C_c^\infty (\mathcal{G}^{V}_{X'}(f), \pi_1^*E) \otimes C_c^\infty (\mathcal{G}^{X'}_{X''}(g))$}
\put(280,50){ $\vector(0,-1){25}$}
\put(240,10){$C_c^\infty ({\cG}_{X}, r^*E) \otimes C_c^\infty (\cG_{X'}^{X} (f)) \otimes C_c^\infty (\cG_{X''}^{X'}(g)).$}
\put(300,38){$\ep_f\otimes  id $}
\end{picture}
}
\vspace{-2cm}
\begin{Equation}
\end{Equation}
\vspace{1.5cm}\noindent
\end{lemma}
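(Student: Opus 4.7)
The strategy is to test the diagram on a smooth compactly supported section
$\xi\in C_c^\infty(\maG_{X''}^{V}(g\circ f),\pi_1^*E)$ and unfold both compositions
using the definitions of $\ep_{g\circ f}=\nu_{g\circ f}^{-1}\circ\Phi_{g\circ f}$,
$\ep_f=\nu_f^{-1}\circ\Phi_f$, $\Pi_{g,f}=\mu_{g,f}\circ\Lambda_{g,f}$ established in
the proof of Lemma \ref{comdiag1}, and $\Xi_{g,f}^{X'}$ as defined via the diffeomorphism $\chi$
of Proposition \ref{composition}. All four maps in the diagram are compositions of two
elementary pieces: a pull-back by a diffeomorphism between a quotient and a fibered product
(of type $\phi$, $\chi$, $\lambda$), followed by a ``$\nu$-type'' identification between smooth
sections over the fibered product and an algebraic tensor product of smooth sections. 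Since
everything is continuous (indeed isometric) and the diagram must commute on a dense subspace,
it suffices to check it on such sections $\xi$.

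The key geometric observation is that there is a natural triple fibered product
\[
 Z := \maG_X \times_{\maG_X^X} \maG_{X'}^X(f) \times_{\maG_{X'}^{X'}} \maG_{X''}^{X'}(g),
\]
together with a canonical diffeomorphism $\Theta : Z \xrightarrow{\cong} \maG_{X''}^{V}(g\circ f)$
given by $\Theta\bigl[\gamma;(s(\gamma),\alpha');(s(\alpha'),\alpha'')\bigr]=
\bigl(r(\gamma),\,f(\gamma)\,g(\alpha')\,\alpha''\bigr)=\bigl(r(\gamma),\,(g\circ f)(\gamma)\,g(\alpha')\,\alpha''\bigr)$.
The two routes around the diagram correspond to two different parenthesisations of the
identification $\maG_{X''}^{V}(g\circ f)\cong Z$: the composition
$(\ep_f\otimes\mathrm{id})\circ\Pi_{g,f}$ first uses the diffeomorphism $\lambda$ to split off the
$\maG_{X''}^{X'}(g)$ factor and then uses $\phi$ on the remaining $\maG_{X'}^{V}(f)$ factor,
i.e.\ it corresponds to the grouping
$\bigl(\maG_X\times_{\maG_X^X}\maG_{X'}^X(f)\bigr)\times_{\maG_{X'}^{X'}}\maG_{X''}^{X'}(g)$.
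The composition $(\mathrm{id}\otimes\Xi_{g,f}^{X'})\circ\ep_{g\circ f}$ first uses the diffeomorphism
$\phi_{g\circ f}$ associated to $g\circ f$ to split off $\maG_X$ and then uses $\chi$ to split
the remaining $\maG_{X''}^X(g\circ f)$, i.e.\ it corresponds to the grouping
$\maG_X\times_{\maG_X^X}\bigl(\maG_{X'}^X(f)\times_{\maG_{X'}^{X'}}\maG_{X''}^{X'}(g)\bigr)$.

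Concretely, the verification proceeds in two steps. \emph{Step one}: check the pure
diffeomorphism part, namely that under both parenthesisations the map from
$\maG_{X''}^{V}(g\circ f)$ to $Z$ is the same, i.e.\ the diagram of $\phi$, $\chi$, $\lambda$
and $\phi_{g\circ f}$ closes up. This reduces to showing that for
$[\gamma;(s(\gamma),\alpha');(s(\alpha'),\alpha'')]\in Z$ both compositions yield the pair
$(r(\gamma),f(\gamma)g(\alpha')\alpha'')$, which follows from the explicit formulas defining
$\phi$, $\lambda$, $\chi$ together with the facts $r(g(\alpha'))=g(f(s(\gamma)))=(g\circ f)(s(\gamma))$
and the functoriality $g(f(\gamma))=(g\circ f)(\gamma)$ of the induced groupoid morphisms.
\emph{Step two}: check that the ``$\nu$-type'' identifications are compatible with the fibered product
structure, i.e.\ that decomposing a section over $Z$ into a triple tensor product gives the same
result whether one first decomposes along $\maG_{X'}^{X'}$ and then along $\maG_X^X$, or the other way
around. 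This is a bookkeeping statement about the sums appearing in $\nu_f$, $\nu_{g\circ f}$ and the
map of Proposition \ref{composition}: each sum ranges over a fiber of a groupoid projection, and a
short rearrangement using the freeness and properness of the left action of $\maG_X^X$ and the right
action of $\maG_{X'}^{X'}$ shows that the two iterated decompositions coincide term-by-term.

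The main obstacle is precisely this second step: keeping track of which groupoid variable ranges over
which fiber under the two different orderings, and checking that the substitutions which implement
the equivalence relations in the quotients match. However, since all the maps involved are already
known to be well-defined isometric isomorphisms (by Propositions \ref{composition}, \ref{Iso1} and
\ref{Lambda}), the algebraic verification is forced, and the standard change-of-variable trick used
in the proofs of Lemma \ref{comdiag1} and Proposition \ref{composition} applies verbatim. Once both
steps are in place, evaluating on an elementary tensor and comparing gives the desired commutativity
on $C_c^\infty$, and the result extends to the Hilbert module completions by continuity.
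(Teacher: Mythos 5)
Your plan matches the paper's proof: the paper likewise views the triple tensor product as sections over the triple fibered product $\maG_X\times_{\maG_X^X}\maG_{X'}^X(f)\times_{\maG_{X'}^{X'}}\maG_{X''}^{X'}(g)$ and checks by direct pointwise evaluation that both compositions send a section $\alpha$ to $[\gamma;(s(\gamma),\gamma');(s(\gamma'),\gamma'')]\mapsto \alpha\bigl(r(\gamma),(g\circ f)(\gamma)\,g(\gamma')\,\gamma''\bigr)$. Only note that your formula for $\Theta$ should read $g\bigl(f(\gamma)\alpha'\bigr)\alpha''$ rather than $f(\gamma)\,g(\alpha')\,\alpha''$ (the latter is not composable since $f(\gamma)\in\maG'$ and $g(\alpha')\in\maG''$), though you immediately write the correct equivalent $(g\circ f)(\gamma)\,g(\alpha')\,\alpha''$.
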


\begin{proof}\
We see the elements of {$C_c^\infty ({\cG}_{X}, r^*E) \otimes C_c^\infty (\cG_{X'}^{X} (f)) \otimes C_c^\infty (\cG_{X''}^{X'}(g))$} as sections over
{
$$
\cG_{X} \times_{{\cG}_{X}^{X}}\cG_{X'}^{X} (f) \times_{\cG_{X'}^{X'}} \cG_{X''}^{X''}(g).
$$}
Then a straightforward computation gives for $\alpha\in C_c^\infty (\mathcal{G}^{V}_{X''},\pi^\ast_1 E)$
{
\begin{eqnarray}\label{lhs7}
((\ep_f\otimes id)\circ\Pi_{g,f}) (\alpha) [\gamma_1;[(s(\gamma_1),\gamma');(s(\gamma'),\gamma'')]]&=& \alpha(r(\gamma_1),{g}({f}(\gamma_1)\gamma')\gamma'')\nonumber\\
&=& \alpha(r(\gamma'_1),({g}\circ{f})(\gamma_1) {g}(\gamma')\gamma'')
\end{eqnarray}
}
while
{
\begin{eqnarray}\label{rhs7}
((id\otimes \Xi_{g,f}^{X'})\circ \ep_{g\circ f})(\alpha)[\gamma_1;[(s(\gamma_1),\gamma');(s(\gamma'),\gamma'')]]&=& (\ep_{g\circ f}\alpha)[\gamma_1;(s(\gamma_1),{g}(\gamma')\gamma'')]\nonumber\\
&=& \alpha(r(\gamma_1),({g}\circ {f})(\gamma_1){g}(\gamma')\gamma'')
\end{eqnarray}
}
Thus from Equations $\ref{lhs7}$ and $\ref{rhs7}$ we get the desired equality.
\end{proof}

From the previous two lemmas, we can deduce:

\begin{proposition}\label{SmoothCommutative}
The following diagram is commutative
\end{proposition}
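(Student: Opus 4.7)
The plan is to derive Proposition \ref{SmoothCommutative} as a direct concatenation of the two preceding lemmas. Lemma \ref{comdiag1} provides the ``top half'' of the functoriality statement, identifying the pull-back of a form from $V''$ under $g\circ f$ with the iterated pull-back computed via $g$ then $f$, modulo the identification $\Pi_{g,f}$ of graph modules. Lemma \ref{comdiag2} provides the ``bottom half'', relating the two ways of splitting a product of edges in the composed graph. Putting these two commutative squares on top of each other produces the rectangle asserted in the proposition.

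Concretely, I would proceed pointwise on a smooth compactly supported section $\omega \in C_c^\infty({\cG'}_{X''}, r^*E'')$. First apply $\Psi_{g\circ f}$ to land in $C_c^\infty(\mathcal{G}_{X''}^V(g\circ f), \pi_1^*E)$. By Lemma \ref{comdiag1} (applied to this element),
$$
\Pi_{g,f}\bigl(\Psi_{g\circ f}(\omega)\bigr) = (\Psi_f\otimes id)\circ (\ep_g\circ \Psi_g)(\omega).
$$
Now apply $(\ep_f\otimes id)$ to both sides. On the right we obtain $((\ep_f\circ\Psi_f)\otimes id)\circ(\ep_g\circ\Psi_g)(\omega)$, which is the composition along the ``right-then-down'' route of the diagram. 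On the left, Lemma \ref{comdiag2} gives $(\ep_f\otimes id)\circ\Pi_{g,f} = (id\otimes \Xi_{g,f}^{X'})\circ \ep_{g\circ f}$, so the left side becomes $(id\otimes\Xi_{g,f}^{X'})\circ \ep_{g\circ f}\circ \Psi_{g\circ f}(\omega)$, which is precisely the ``down-then-right'' route. The two agree, yielding the commutativity of the diagram on smooth compactly supported sections.

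I do not anticipate any real obstacle: all the analytic work has already been absorbed into the chain-level identifications of Lemma \ref{comdiag1} (diffeomorphism $\lambda$, compatibility of $\hat\Psi_f$ with $\Psi_f\otimes id$, and direct kernel computation) and Lemma \ref{comdiag2} (explicit formulas for $\ep_{g\circ f}$ and $\Xi_{g,f}^{X'}$). The only point worth noting is that each map preserves the classes of smooth compactly supported sections on which one is working, which is clear from their explicit pointwise definitions; no completion or regularization is needed at this stage. The passage to Theorem \ref{Functoriality}, where the regularizers $\phi(\Delta)$ and $\phi(\Delta')$ enter, is then handled at the level of cohomology using Lemma \ref{Independant}, which tells us that $\phi(\Delta)$ and $\phi(\Delta')$ induce the identity on de Rham cohomology and may therefore be freely inserted or commuted past adjointable chain maps up to homotopy.
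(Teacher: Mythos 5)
Your proof is correct and is essentially identical to the paper's: both derive the proposition by applying $(\ep_f\otimes id)$ to the identity of Lemma \ref{comdiag1} and then substituting the identity of Lemma \ref{comdiag2}, with the only difference being that you evaluate pointwise on a section while the paper writes the same chain of operator identities.
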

{
\medskip
\begin{picture}(415,80)
\put(90,60){$C_c^\infty ({\mathcal{G}}_{X''},r^\ast E'')$}
\put(135,50){ $\vector(0,-1){25}$}
\put(30,10){$C_c^\infty (\cG_{X}, r^*E)\otimes C_c^\infty (\cG_{X''}^{X}(g\circ f))$}
\put(85,38){$\ep_{g\circ f}\Psi_{g\circ f}$}
\put(200,70){$\ep_g\Psi_g$}
\put(180,64){\vector(1,0){50}}
\put(180,13){\vector(1,0){50}}
\put(185,25){$id \otimes \Xi_{g,f}^{X'}$}
\put(240,60){$C_c^\infty (\mathcal{G}_{X'}, r^*E') \otimes C_c^\infty (\mathcal{G}^{X'}_{X''}(g))$}
\put(280,50){ $\vector(0,-1){25}$}
\put(240,10){$C_c^\infty ({\cG}_{X}, r^*E) \otimes C_c^\infty (\cG_{X'}^{X} (f)) \otimes C_c^\infty (\cG_{X''}^{X''}(g)).$}
\put(300,38){$\ep_f\Psi_f \otimes id $}
\end{picture}
}
\vspace{-2cm}
\begin{Equation}
\end{Equation}
\vspace{1.5cm}\noindent

{
\begin{proof}
We know  from Lemma \ref{comdiag1} that
$$
\Pi_{g,f} \circ \Psi_{g\circ f}  = (\Psi_f\otimes id)\circ \ep_g\circ \Psi_g.
$$
Therefore,
$$
(\ep_f\Psi_f\otimes id) \circ \ep_g\Psi_g = (\ep_f\otimes id) \circ \Pi_{g,f} \circ \Psi_{g\circ f}.
$$
Now, from Lemma \ref{comdiag2}, we deduce that
$$
(\ep_f\otimes id) \circ \Pi_{g,f} = (id \otimes \Xi_{g,f}^{X'}) \circ \ep_{g\circ f}.
$$
Hence, we finally get
$$
(\ep_f\Psi_f\otimes id) \circ \ep_g\Psi_g = (id \otimes \Xi_{g,f}^{X'}) \circ \ep_{g\circ f}\circ \Psi_{g\circ f}.
$$
\end{proof}
}
We finish this subsection by deducing the proof of Theorem \ref{Functoriality}.
{
The composite map $(f_\phi^*\otimes id) \circ g_\phi^*$ is given by
$$
(f_\phi^*\otimes id) \circ g_\phi^* = (\ep_f\circ \Psi_f \circ \phi(\Delta) \otimes id) \circ \ep_g\circ \Psi_g\circ \phi(\Delta'').
$$
But the adjointable chain map $\ep_f\circ \Psi_f \circ \phi(\Delta') \otimes id$ induces the same map on cohomology as the adjointable chain map $ \phi(\Delta) \circ\ep_f\circ \Psi_f \otimes id$. Therefore, $(f_\phi^*\otimes id) \circ g_\phi^*$ induces the same map on cohomology as
$$
( \phi(\Delta)\otimes id\otimes id) \circ(\ep_f\circ \Psi_f \otimes id) \circ \ep_g\circ \Psi_g\circ \phi(\Delta'').
$$
Now, by Proposition \ref{SmoothCommutative}, we have on smooth compactly supported forms
$$
( \phi(\Delta)\otimes id\otimes id) \circ(\ep_f\circ \Psi_f \otimes id) \circ \ep_g\circ \Psi_g\circ \phi(\Delta'') = (\phi(\Delta)\otimes id\otimes id) \circ (id \otimes \Xi_{g,f}^{X'}) \circ (g\circ f)_\phi^*.
$$
Since this is an adjointable operator, this relation still holds on the Hilbert module $\maE_{X'', E''}$. Moreover, the operator $(id \otimes \Xi_{g,f}^{X'}) \circ (g\circ f)_\phi^*$ is an adjointable chain map and  $\phi(\Delta)\otimes id\otimes id$ induces the identity on cohomologies, whence $(f_\phi^*\otimes id) \circ g_\phi^*$ induces the same map on cohomologies as the map $(id \otimes \Xi_{g,f}^{X'}) \circ (g\circ f)_\phi^*$.
}

\section{leafwise homotopy equivalence and HP complexes}\label{HomotopyModules}
This section is devoted to the main result of this paper, namely that any leafwise homotopy equivalence induces a homotopy of the corresponding HP complexes, and hence an explicit homotopy between the corresponding leafwise signature operators.

\subsection{Leafwise homotopy equivalences}

\begin{proposition}\label{Iso}\
Assume that $(V, \maF)$  and $(V', \maF')$ are closed oriented foliated manifolds and that $f$ is an oriented leafwise homotopy equivalence between $(V,\maF)$ and $(V',\maF')$ with homotopy inverse $g$. We fix as before complete transversals $X$ and $X'$  in $(V,\maF)$ and $(V', \maF')$ respectively,   and we also consider another complete transversal $X''$ in  $(V,\maF)$. Then we have an isomorphism of Hilbert modules
$$
\mathcal{E}_{X'}^X(f)\otimes_{C^\ast(\mathcal{G}'^{X'}_{X'})}\mathcal{E}^{X'}_{X''}(g)\cong  \mathcal{E}^{X''}_{X}(g\circ f).
$$
\end{proposition}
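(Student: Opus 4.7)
My plan is to exhibit the candidate isomorphism as the natural ``convolution'' map at the level of smooth compactly supported sections and then show it is isometric and has dense image, so that it extends to the desired Hilbert module isomorphism. The map in question is precisely the one constructed in Proposition \ref{composition}: under the diffeomorphism $\chi\colon \maG_{X'}^X(f)\times_{\maG'^{X'}_{X'}}\maG^{X'}_{X''}(g)\xrightarrow{\cong}\maG^{X}_{X''}(g\circ f)$ (now with $V''=V$ and $g$ the homotopy inverse of $f$), it sends $\xi_f\otimes \eta_g$ to
$$
(\xi_f\ast\eta_g)(x,\alpha'') := \sum_{\alpha'_1\in \cG'^{f(x)}_{X'}} \xi_f(x,\alpha'_1)\,\eta_g(s(\alpha'_1),g(\alpha'_1{}^{-1})\alpha'').
$$
By Proposition \ref{composition}, this map is well defined on the algebraic tensor product over $C_c^\infty(\maG'^{X'}_{X'})$, and it is clearly $C_c^\infty(\maG^{X''}_{X''})$-linear on the right and intertwines the left $C_c^\infty(\maG_X^X)$-actions $\pi_f$ and $\pi_{g\circ f}$.

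The first key step is to verify the isometry property. On the tensor product side, the inner product is computed via
$$
\langle \xi_f\otimes \eta_g,\,\xi'_f\otimes\eta'_g\rangle \;=\; \langle \eta_g,\, \pi_g(\langle \xi_f,\xi'_f\rangle)\,\eta'_g\rangle,
$$
where $\langle \xi_f,\xi'_f\rangle\in C_c^\infty(\maG'^{X'}_{X'})$ is given by the formula from Section \ref{MoritaModules}, and $\pi_g$ is the representation of Proposition \ref{Compacts}. On the other side, $\langle \xi_f\ast\eta_g,\,\xi'_f\ast\eta'_g\rangle$ is computed by the defining formula of the inner product on $\maE^{X''}_X(g\circ f)$. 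I would expand both expressions as (countable) sums indexed by $x\in X\cap L$, $\gamma'_1\in\maG'$, and elements of $\maG''$, and then perform a change of variable of the form $\gamma'\mapsto \gamma'\alpha'$ inside $\maG'^{X'}_{X'}$. The equivalence relation defining the fibered product $\times_{\maG'^{X'}_{X'}}$ is exactly what allows the two triple sums to match term by term; the leafwise homotopy equivalence assumption guarantees countability of the relevant transversal intersections (Assumption \ref{Etale}). This verification is essentially the same calculation used in the proof of Proposition \ref{Compacts}, rewritten so that the two summation orders can be compared.

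Having the isometry on $C_c^\infty\otimes_{\mathrm{alg}} C_c^\infty$, the map extends to an isometric $C^*(\maG^{X''}_{X''})$-linear map from the Hilbert module completion of the algebraic tensor product (with respect to the maximal norms) into $\maE^{X''}_X(g\circ f)$. Since the interior tensor product $\maE_{X'}^X(f)\otimes_{C^*(\maG'^{X'}_{X'})}\maE^{X'}_{X''}(g)$ is by definition this completion, we obtain an isometric embedding. For surjectivity I would argue by density: the image contains all finite sums $\sum_i \xi_{f,i}\ast \eta_{g,i}$, and using a partition of unity on the smooth manifold $\maG^{X}_{X''}(g\circ f)\cong \maG_{X'}^X(f)\times_{\maG'^{X'}_{X'}}\maG^{X'}_{X''}(g)$ subordinate to a cover by product-like charts, together with smooth cutoffs supported in such charts, every element of $C_c^\infty(\maG^{X}_{X''}(g\circ f))$ can be written (or uniformly approximated) as such a finite sum. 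Since $C_c^\infty(\maG^{X}_{X''}(g\circ f))$ is dense in $\maE^{X''}_X(g\circ f)$, the isometry is surjective and hence a Hilbert module isomorphism.

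The main obstacle is the isometry verification: it requires keeping careful track of two nested sums over $\maG'^{X'}_{X'}$ and one sum over the countable set $X\cap L$ of transversal points on a single leaf, and correctly exploiting Assumption \ref{Etale} together with the groupoid composition law to match the two sides. A subsidiary concern is that we must work with the \emph{maximal} $C^*$-completions throughout; this is handled automatically because the map we extend is $C^*(\maG'^{X'}_{X'})$-balanced at the algebraic level and isometric on smooth compactly supported sections, so universality of the maximal norm gives the continuous extension. A more conceptual alternative, which I would mention as a double check, is that by Proposition \ref{Compacts} both $\maE_{X'}^X(f)$ and $\maE^{X'}_{X''}(g)$ are imprimitivity bimodules for the Morita equivalences $C^*(\maG_X^X)\sim C^*(\maG'^{X'}_{X'})$ and $C^*(\maG'^{X'}_{X'})\sim C^*(\maG^{X''}_{X''})$, hence so is their interior tensor product, and the natural convolution map identifies this composite imprimitivity bimodule with the imprimitivity bimodule $\maE^{X''}_X(g\circ f)$ associated with $g\circ f$.
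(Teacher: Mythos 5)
Your choice of map and your isometry verification follow the paper's own proof: the paper computes $\langle \xi_f\ast\eta_g,\xi_f\ast\eta_g\rangle(\gamma'')$ by exactly the triple-sum expansion and change of variables you describe and matches it with $\langle\eta_g,\pi_g(\langle\xi_f,\xi_f\rangle)\eta_g\rangle(\gamma'')$, so that part of your plan is sound — though be aware it is the bulk of the work and you have only sketched it. Where you genuinely diverge is surjectivity. The paper does not use a partition of unity; it reuses the identity $\theta_{\eta_1,\eta_2}=\pi_{g\circ f}(\eta_1\star\eta_2)$ from Proposition \ref{Compacts} together with the fact that $\pi_{g\circ f}$ is an isomorphism onto $\maK_{C^*(\maG_{X''}^{X''})}(\maE^{X''}_X(g\circ f))$ — which is precisely where the leafwise homotopy equivalence hypothesis enters — and then verifies the algebraic identity $\pi_{g\circ f}(\eta_1\star\eta_2)\xi=\eta_1\ast(\eta_2\bullet\xi)$, so that the dense subspace $\maK(\maE)\maE$ lies in the range of the isometry. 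Your local argument (freeness and properness of the $\maG'^{X'}_{X'}$-action, trivializing charts in which the convolution sum reduces to a single term, Stone--Weierstrass approximation by finite sums of products) is a legitimate and more elementary alternative, with the mild advantage of not invoking the homotopy equivalence for this step; but as written it has two soft spots you should tighten: (i) an element of $C_c^\infty(\maG^{X}_{X''}(g\circ f))$ is in general \emph{not} a finite sum of convolutions, so you must rely on the approximation clause, and you then need the standard estimate that uniform convergence with supports in a fixed compact set implies convergence in the Hilbert module norm of the maximal completion (via the $I$-norm bound); (ii) you should note that an isometry has closed range, so density of the image does yield surjectivity. Your closing remark that the composite of the two imprimitivity bimodules is the imprimitivity bimodule of $g\circ f$ is in fact closest in spirit to how the paper justifies Proposition \ref{Compacts} itself, via the Hilsum--Skandalis stability theorem.
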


\begin{proof}
We use the previous notations  in the proof of Proposition \ref{composition} and compute for $\xi_f\in C_c(\cG_{X'}^X(f))$ and $\eta_g\in C_c(\cG_{X''}^{X'} (g)$, $<\xi_f \ast \eta_g, \xi_f \ast \eta_g>$ as follows. Let us thus denote by $L$, $L'$ and $L''$ the leaves in $V$, $V'$ and $V''$ corresponding to a fixed $\gamma''\in {\cG''}_{X''}^{X''}$, see comment after the proof. Then we can write
$$
<\xi_f\ast \eta_g, \xi_f\ast \eta_g>(\gamma'')=\sum_{x\in L\cap X} \sum_{\gamma''_1 \in {\cG''}_{r(\gamma'')}^{gf(x)}} \overline{\xi_f\ast\eta_g(x,\gamma''_1)}\xi_f\ast\eta_g(x, \gamma''_1\gamma'')
$$
Replacing $\xi_f \ast \eta_g$ by its definition, we get
\begin{multline*}
 <\xi_f\ast \eta_g, \xi_f\ast \eta_g>(\gamma'')=\sum_{x\in L\cap X} \sum_{\gamma''_1 \in {\cG''}_{r(\gamma'')}^{gf(x)}}
\sum_{\gamma'_1 \in {\cG'}_{X'}^{f(x)}} \overline{\xi_f (x, \gamma'_1) \eta_g(s(\gamma'_1), g({\gamma'}_1^{-1}\gamma''_1)}\\
\sum_{\gamma'_2\in {\cG'}_{X'}^{f(x)}} \xi_f(x, \gamma'_2) \eta_g(s(\gamma'_2), g({\gamma'}_2^{-1})\gamma''_1\gamma'').
\end{multline*}
We hence get
\begin{multline*}
  <\xi_f\ast \eta_g, \xi_f\ast \eta_g>(\gamma'')= \sum_{x\in L\cap X} \sum_{x', y'\in L'\cap X'} \sum_{\gamma''_1\in {\cG''}_{r(\gamma'')}^{g(x')}} \sum_{\gamma'_2\in {\cG'}_{x'}^{f(x)} } \sum_{\alpha'\in {\cG'}_{y'}^{x'}}\\
\overline{\xi_f (x, \gamma'_2) \eta_g(x', \gamma''_1)} \eta_g(y', g(\alpha')^{-1}\gamma''_1\gamma'') \xi_f(x, \gamma'_2\alpha').
\end{multline*}
In the  sum over $\alpha'$ we set $\gamma'_3= \gamma'_2\alpha'$ and  thus get
\begin{multline*}
  <\xi_f\ast \eta_g, \xi_f\ast \eta_g>(\gamma'')= \sum_{x\in L\cap X} \sum_{x', y'\in L'\cap X'} \sum_{\gamma''_1\in {\cG''}_{r(\gamma'')}^{g(x')}} \sum_{\gamma'_2\in {\cG'}_{x'}^{f(x)} } \sum_{\gamma'_3\in {\cG'}_{y'}^{f(x)}}\\
\overline{\xi_f (x, \gamma'_2) \eta_g(x', \gamma''_1)} \eta_g(y', g(\gamma'_3)^{-1}g(\gamma'_2) \gamma''_1\gamma'') \xi_f(x, \gamma'_3).
\end{multline*}
Now computing similarly $<\eta_g, \pi_g(<\xi_f, \xi_f>)(\eta_g)> (\gamma'')$, we get exactly the same expression.

We now need to check surjectivity. But Proposition \ref{Compacts} shows that the representation
$$
\pi_{g\circ f}:C^\ast(\mathcal{G}^{X}_{X})\longrightarrow \mathcal{K}_{C^*(\maG_{X''}^{X''})}(\mathcal{E}_{X}^{X''}(g\circ f)),
$$
 is a $C^*$-algebra isomorphism. On the other hand, for any $\xi\in C_c(\cG_{X''}^X(g\circ f))$, we have
$$
\pi_{g\circ f}(\eta_1\star \eta_2) \xi = \eta_1 \ast (\eta_2 \bullet \xi),
$$
where
$$
\eta_2\bullet \xi (x', \gamma''):= \sum_{x\in L\cap X} \sum_{\gamma'\in {\cG'}_{x'}^{f(x)}} {\overline{\eta_2(x, \gamma')}} \xi(x, g(\gamma')\gamma'').
$$
We compute the left hand side as follows:
\begin{eqnarray}\label{lhs10}
\pi_{g\circ f}(\eta_1\star \eta_2)\xi  &=&  \sum_{\alpha\in \mathcal{G}^x_X} (\eta_1\star\eta_2)(\alpha) \xi(s(\alpha), {g}\circ {f}(\alpha^{-1}) \gamma'')\nonumber\\
&=& \sum_{\alpha\in \mathcal{G}^x_X} \sum_{\alpha'_1\in \mathcal{G}'^{f(x)}_{X'}}\eta_1(x,\alpha'_1)\overline{\eta_2(s(\alpha),{f}(\alpha^{-1})\alpha'_1)}  \xi(s(\alpha), {g}\circ {f}(\alpha^{-1}) \gamma'')
\end{eqnarray}
Now computing the right hand side, we have for any $\gamma'\in \mathcal{G}'^{f(x)}_{X'}$,
\begin{eqnarray}
\label{rhs10}
[\eta_1\ast (\eta_2\bullet \xi)](x,\gamma'')&=& [\eta_1\ast (\eta_2\bullet \xi)][(x,\gamma');(s(\gamma'),{g}(\gamma'^{-1})\gamma'')] \nonumber\\
&=& \sum_{\alpha'\in \mathcal{G}'^{s(\gamma')}_{X'}} \eta_1(x,\gamma'\alpha') (\eta_2\ast \xi)(s(\alpha'),{g}(\alpha'^{-1}\gamma'^{-1}) \gamma'')\nonumber\\
&=& \sum_{\alpha'_1\in \mathcal{G}'^{f(x)}_{X'}} \eta_1(x,\alpha'_1) (\eta_2\ast \xi)(s(\alpha'),{g}(\alpha'^{-1}_1) \gamma'')\nonumber\\
&=& \sum_{\alpha'_1\in \mathcal{G}'^{f(x)}_{X'}} \eta_1(x,\alpha'_1) \sum_{x_1\in X\cap L_{s(\alpha'_1)}} \sum_{\gamma'_1\in \mathcal{G'}^{f(x_1)}_{s(\alpha'_1)}} \overline{\eta_2(x_1,\gamma'_1)} \xi(x_1,{g}(\gamma'_1\alpha'^{-1}_1)\gamma'') \nonumber\\
&=& \sum_{\alpha'_1\in \mathcal{G}'^{f(x)}_{X'}} \eta_1(x,\alpha'_1) \sum_{x_1\in X\cap L_{s(\alpha'_1)}} \sum_{\gamma'_2\in \mathcal{G'}^{f(x_1)}_{f(x)}} \overline{\eta_2(x_1,\gamma'_2\alpha'_1)} \xi(x_1,{g}(\gamma'_2)\gamma'') \nonumber\\
&=& \sum_{\alpha'_1\in \mathcal{G}'^{f(x)}_{X'}} \eta_1(x,\alpha'_1) \sum_{x_1\in X\cap L_{x}} \sum_{\gamma_2\in \mathcal{G}^{x_1}_{x}} \overline{\eta_2(x_1,{f}(\gamma_2)\alpha'_1)} \xi(x_1,{g}\circ {f}(\gamma_2))\gamma'') \nonumber\\
&=& \sum_{\alpha'_1\in \mathcal{G}'^{f(x)}_{X'}} \eta_1(x,\alpha'_1)  \sum_{\gamma_2\in \mathcal{G}^{X}_{x}} \overline{\eta_2(r(\gamma_2),{f}(\gamma_2)\alpha'_1)} \xi(r(\gamma_2),{g}\circ {f}(\gamma_2))\gamma'') \nonumber\\
&=& \sum_{\alpha'_1\in \mathcal{G}'^{f(x)}_{X'}} \eta_1(x,\alpha'_1)  \sum_{\gamma\in \mathcal{G}^{x}_{X}} \overline{\eta_2(s(\gamma),{f}(\gamma^{-1})\alpha'_1)} \xi(s(\gamma),{g}\circ {f}(\gamma^{-1}))\gamma'') \nonumber\\
&=& \sum_{\alpha'_1\in \mathcal{G}'^{f(x)}_{X'}} \sum_{\gamma\in \mathcal{G}^{x}_{X}} \eta_1(x,\alpha'_1)  \overline{\eta_2(s(\gamma),{f}(\gamma^{-1})\alpha'_1)} \xi(s(\gamma),{g}\circ {f}(\gamma^{-1}))\gamma'')
\end{eqnarray}
Comparing $\eqref{lhs10}$ and $\eqref{rhs10}$ gives the desired equality.
\end{proof}

We have used in the previous proof the following standard results (see also \cite{RoyThesis}):
\begin{itemize}
 \item Let $f$ be an oriented homotopy equivalence between the oriented closed manifolds $V$ and $V'$, then $f$ is surjective.
\item Assume that $f$ is a leafwise homotopy equivalence between $(V,\maF)$ and $(V',\maF')$ then for any $(x,y)\in V^2$ lying in the same leaf, the induced map between the monodromy groupoids $\cG$ and $\cG'$ restricts  to a bijection between $\cG_x^y$ and  ${\cG'}_{f(x)}^{f(y)}$.
\item Assume that $f$ is a leafwise homotopy equivalence between $(V,\maF)$ and $(V',\maF')$ then the inverse image of a given leaf is a single leaf.
\end{itemize}

\begin{corollary}\label{Morita}
Under the assumption of Proposition \ref{Iso} we have an isomorphism of Hilbert $C^\ast(\mathcal{G}^X_X)$-modules
$$
\mathcal{E}_{X'}^X(f)\otimes_{C^\ast(\mathcal{G}'^{X'}_{X'})}\mathcal{E}^{X'}_{X}(g)\cong  C^*(\cG ^{X}_{X}).
$$
In the same way we have an isomorphism of Hilbert $C^\ast(\mathcal{G'}^{X'}_{X'})$-modules
$$
\mathcal{E}_{X}^{X'}(g)\otimes_{C^\ast(\mathcal{G}^{X}_{X})}\mathcal{E}^{X}_{X'}(f)\cong  C^*({\cG'}^{X'}_{X'}).
$$
\end{corollary}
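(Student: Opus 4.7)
The plan is to deduce the corollary in two moves, combining Proposition \ref{Iso} with the fact that $g\circ f$ (resp.\ $f\circ g$) is leafwise homotopic to the identity. First, I would apply Proposition \ref{Iso} with the complete transversal $X''$ of $V$ taken to be $X$ itself, which gives the Hilbert module isomorphism
$$
\maE^X_{X'}(f)\otimes_{C^*(\cG'^{X'}_{X'})}\maE^{X'}_X(g) \;\cong\; \maE^X_X(g\circ f).
$$
Thus the first statement reduces to establishing an isomorphism of right Hilbert $C^*(\cG^X_X)$-modules $\maE^X_X(g\circ f)\cong C^*(\cG^X_X)$. The second statement will follow by a symmetric argument, applying Proposition \ref{Iso} with $f$ and $g$ interchanged and $X''=X'$ as a transversal of $V'$, and then identifying $\maE^{X'}_{X'}(f\circ g)\cong C^*(\cG'^{X'}_{X'})$.

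For the remaining step, choose a smooth leafwise homotopy $H:V\times[0,1]\to V$ with $H(\cdot,0)=\mathrm{id}_V$ and $H(\cdot,1)=g\circ f$. For each $v\in V$, the path $t\mapsto H(v,t)$ lies in the leaf $L_v$, so its monodromy class defines a smooth section $h:V\to\cG$ with $s(h(v))=v$ and $r(h(v))=(g\circ f)(v)$. Using $h$, I introduce the smooth map
$$
\Phi: \cG^X_X(g\circ f) \longrightarrow \cG^X_X, \qquad (x,\gamma)\longmapsto h(x)^{-1}\gamma,
$$
which is well-defined because $s(h(x)^{-1})=r(h(x))=(g\circ f)(x)=r(\gamma)$. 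A direct verification shows that $\Phi$ is a diffeomorphism with inverse $\delta\mapsto (r(\delta),h(r(\delta))\delta)$.

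Pulling back functions yields $\Phi^*:C_c^\infty(\cG^X_X)\to C_c^\infty(\cG^X_X(g\circ f))$ given by $\Phi^*(\eta)(x,\gamma)=\eta(h(x)^{-1}\gamma)$. Compatibility with the right $C_c(\cG^X_X)$-action follows immediately from writing out the convolution formulas and commuting the change of variable. To verify that $\Phi^*$ is isometric, I would expand $\langle \Phi^*\eta_1,\Phi^*\eta_2\rangle(\alpha)$ using the inner product formula on $\maE^X_X(g\circ f)$, which involves a double sum over $x\in X$ and $\gamma_1\in \cG^{(g\circ f)(X)}_{r(\alpha)}$ with $(g\circ f)(x)=r(\gamma_1)$, and then re-index via the bijection $(x,\gamma_1)\longleftrightarrow \delta:=h(x)^{-1}\gamma_1$ with $\cG^X_{r(\alpha)}$; noting that $h(x)^{-1}\gamma_1\alpha=\delta\alpha$, this collapses to $\sum_{\delta\in\cG^X_{r(\alpha)}}\overline{\eta_1(\delta)}\eta_2(\delta\alpha) = \langle\eta_1,\eta_2\rangle(\alpha)$. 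Hence $\Phi^*$ extends to the required isometric isomorphism of Hilbert modules.

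The main obstacle is ensuring that the section $h$ can be taken smooth (routine once the homotopy $H$ is chosen smoothly, and otherwise handled by a standard approximation on the compact base), together with the careful bookkeeping in the change of variables for the inner product. I note that for the corollary as stated (isomorphism of right Hilbert $C^*(\cG^X_X)$-modules) no further compatibility is required; upgrading to a bimodule isomorphism would additionally require the identity $h(r(\alpha))\,\alpha = (g\circ f)(\alpha)\,h(s(\alpha))$ in $\cG$, which holds by filling the square $(s,t)\mapsto H(\alpha(s),t)$ and is not needed here.
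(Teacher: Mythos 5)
Your proposal is correct and follows essentially the same route as the paper: reduce via Proposition \ref{Iso} with $X''=X$ to the identification $\maE_X^X(g\circ f)\cong C^*(\cG_X^X)$, and then realize that identification by the section $x\mapsto\gamma_x$ coming from the leafwise homotopy (your $\Phi^*$ is exactly the inverse of the paper's map $\Lambda\xi(\gamma)=\xi(r(\gamma),\gamma_{r(\gamma)}\gamma)$, and coincides with the map $\theta^1_h$ of Proposition \ref{homotopyiso}). You merely spell out the isometry and module-compatibility computation that the paper leaves as ``a straightforward computation.''
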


\begin{proof}
We only prove the first isomorphism. Recall that $g\circ f$ is then leaf-preserving and that there exists a smooth leaf-preserving homotopy $H:V\times [0,1]\to V$ such that
$$
H(0, \bullet) = id \text{ and } H(1, \bullet) = g\circ f.
$$
Here the foliation on $V\times [0,1]$ is the one with leaves $L\times [0,1]$ where $L$ is a leaf of $(V, \maF)$. For any $x\in V$ the homotopy class of the path $(H(t,x))_{0\leq t\leq 1}$ is denoted $\gamma_x$ so $\gamma_x\in \cG_x^{gf(x)}$. We then consider the following map
$$
\Lambda: C_c(\cG_X^X(g\circ f) ) \rightarrow C_c(\cG_X^X)\text{ given by } \Lambda\xi(\gamma) := \xi( r(\gamma), \gamma_{r(\gamma)}\gamma ).
$$
A straightforward computation shows that $\Lambda$ allows to identify the Hilbert $C^*(\cG_X^X)$-modules $\maE_X^X(g\circ f)$ and $C^*(\cG_X^X)$. Applying
Proposition \ref{Iso}, we conclude.
\end{proof}

For $0 \leq s\leq 1$, let $h_s:= h\circ i_s$, where $i_s: V\hookrightarrow [0,1]\times V$ is the map $i_s(v)= (s,v)$. Then  the arguments in the proof of Corollary \ref{Morita} above give

\begin{proposition}\label{homotopyiso}
$\mathcal{E}^X_X(h_s)$ is isomorphic to $C^*(\cG_X^X)$ as a Hilbert module for all $0\leq s\leq 1$.
\end{proposition}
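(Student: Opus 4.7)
The plan is to mimic the strategy of the proof of Corollary \ref{Morita}, using the leaf-preserving homotopy $h$ itself to produce a canonical family of paths. Since $h:[0,1]\times V\to V$ is leaf-preserving, for each $v\in V$ the curve $t\mapsto h(t,v)$, $0\le t\le s$, lies entirely in the leaf $L_v$ through $v$ and determines a homotopy class $\gamma_v^s\in\mathcal{G}_v^{h_s(v)}$, depending smoothly on $v$. This plays the role of the class $\gamma_x$ from Corollary \ref{Morita}, reducing to the unit arrow at $v$ when $s=0$, and specializing at $s=1$ to the situation already treated in that corollary.

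In direct analogy with the map $\Lambda$ used there, I would define
$$
\Lambda_s:C_c(\mathcal{G}_X^X(h_s))\longrightarrow C_c(\mathcal{G}_X^X),\qquad (\Lambda_s\xi)(\alpha):=\xi\bigl(r(\alpha),\gamma_{r(\alpha)}^s\cdot\alpha\bigr).
$$
The underlying assignment $\alpha\mapsto(r(\alpha),\gamma_{r(\alpha)}^s\alpha)$ is a smooth bijection $\mathcal{G}_X^X\to\mathcal{G}_X^X(h_s)$ with inverse $(x,\gamma)\mapsto(\gamma_x^s)^{-1}\gamma$, so $\Lambda_s$ is a linear isomorphism at the level of compactly supported functions. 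Its right $C_c(\mathcal{G}_X^X)$-linearity is immediate from the identity $r(\alpha\beta)=r(\alpha)$ (hence $\gamma_{r(\alpha\beta)}^s=\gamma_{r(\alpha)}^s$) whenever $\alpha\beta$ is defined.

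The one step requiring computation is checking that $\Lambda_s$ is isometric for the two $C^*(\mathcal{G}_X^X)$-valued inner products. Substituting $\Lambda_s\xi_1,\Lambda_s\xi_2$ into the standard expression $\langle a,b\rangle(\alpha)=\sum_{\gamma\in\mathcal{G}^X_{r(\alpha)}}\overline{a(\gamma)}\,b(\gamma\alpha)$ for $C^*(\mathcal{G}_X^X)$ as a module over itself, and performing on each fiber the change of variable $\gamma\leftrightarrow\gamma_1:=\gamma^s_{r(\gamma)}\cdot\gamma$ (which bijects $\mathcal{G}^x_{r(\alpha)}$ with $\mathcal{G}^{h_s(x)}_{r(\alpha)}$), the resulting double sum can be reindexed as $\sum_{\gamma_1\in\mathcal{G}^{h_s(X)}_{r(\alpha)}}\sum_{x\in X,\,h_s(x)=r(\gamma_1)}$, yielding exactly the formula defining the inner product on $\mathcal{E}^X_X(h_s)$. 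This establishes $\Lambda_s$ as an isometry on smooth compactly supported sections, which then extends to the desired isomorphism of Hilbert $C^*(\mathcal{G}_X^X)$-modules by continuity with respect to the maximal norm. The only mildly delicate point, and not really an obstacle, is the bookkeeping of indices in this reindexing, which is identical in spirit to the final step in the proof of Proposition \ref{Iso}.
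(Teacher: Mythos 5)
Your proposal is correct and takes essentially the same route as the paper: your $\Lambda_s$ is exactly the inverse of the map $\theta^s_h(\xi)(x,\gamma)=\xi((\gamma^s_x)^{-1}\gamma)$ used there (which the paper writes down as $(\theta^s_h)^{\ast}$ to prove surjectivity), and the linearity, isometry and bijectivity verifications coincide with the paper's.
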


\begin{proof}\
Since the techniques are the same, we shall be brief. Denote more generally for $0\leq s\leq 1$ and for $x\in V$  by $\gamma^s_x$ the homotopy class of the path $t\rightarrow h(t,x), 0\leq t \leq s$.
We define a map $\theta^s_h: C_c(\mathcal{G}^X_X)\rightarrow C_c(\mathcal{G}^X_X(h_s))$ by the following formula:
$$
\theta^s_h(\xi)(x,\gamma)=\xi((\gamma^s_x)^{-1}\gamma), \quad \text{ for }\xi \in C_c(\mathcal{G}^X_X),
$$
Then a direct inspection shows that $\theta^s_h$ is $C_c(\mathcal{G}^X_X)$-linear. Moreover, $\theta^s_h$ is easily proved to be an isometry. To finish the proof, we set for $\eta\in C_c(\mathcal{G}^X_X(g\circ f))$ and for $\gamma\in \mathcal{G}_X^X$,
$$
(\theta^s_h)^{\ast}\eta(\gamma)=\eta(r(\gamma), \gamma^s_{r(\gamma)}\gamma).
$$
Then $(\theta^s_h)^{\ast}\eta\in C_c(\mathcal{G}^X_X)$, and we have,
$$
\theta^s_h((\theta^s_h)^{\ast}\eta)(x,\gamma)=((\theta^s_h)^{\ast}\eta)((\gamma^s_x)^{-1}\gamma)= \eta(x,\gamma^s_x(\gamma^s_x)^{-1}\gamma)=\eta(x,\gamma).
$$
Thus $\theta^s_h$ is surjective.
\end{proof}

\begin{remark}\
The map  $\mathcal{G}^X_X(id_V)\stackrel{\pi_2}{\rightarrow} \mathcal{G}^X_X$ induced by  the projection onto the second factor is a diffeomorphism.
\end{remark}

We point out that for any Hilbert $C^*(\cG_X^X)$-module $\maE$, the Hilbert module $\mathcal{E}\otimes_{C^*(\cG_X^X)} C^*(\cG_X^X)$ is canonically isomorphic  to $\mathcal{E}$.  We denote this canonical isomorphism generically by $\delta$.

Assume again that $f$ is an oriented leafwise homotopy equivalence with homotopy inverse $g$ and let us briefly describe  the relation between our Morita Hilbert modules and those introduced by Connes and Skandalis. In \cite{Connes81, ConnesSkandalis}, the graph $\mathcal{G}(f)$ and a corresponding Hilbert module are defined. More precisely, the Connes-Skandalis graph is given by:
$$
\mathcal{G}(f):=\{(v,\alpha'); v\in V, \alpha'\in \mathcal{G}'\text{ and } f(x)=r(\alpha')\}.
$$
$\mathcal{G}(f)$ is a right principal $\mathcal{G}'$-bundle, and it also has an action of $\mathcal{G}$ on the left. One then defines a Hilbert module $\maE(f)$ that we shall call the Connes-Skandalis module.
Let $\maE_X$ be the Hilbert $C^*(\cG_X^X)$-module which is the completion of $C_c(\cG_X)$. We define in the same way the Hilbert $C^*({\cG'}_{X'}^{X'})$-module $\maE_{X'}$. Then the expected relation between the four Hilbert modules can be proved, see \cite{RoyThesis}, i.e.

\begin{proposition}
$$\label{compatibility}
\mathcal{E}_{X}{\otimes}_{C^\ast(\mathcal{G}^X_X)}\mathcal{E}^X_{X'}(f)\cong \mathcal{E}(f){\otimes}_{C^\ast(\mathcal{G}')} \mathcal{E}_{X'}.
$$
\end{proposition}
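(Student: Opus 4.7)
The plan is to identify both Hilbert $C^\ast({\cG'}_{X'}^{X'})$-modules appearing in the isomorphism with a single common module, namely $\maE^V_{X'}(f)$, the maximal completion of $C_c^\infty(\cG^V_{X'}(f))$ under the inner product introduced in Subsection 3.2 (in the trivial-bundle case $E=\C$). The LHS identification is already implicit in Subsection 3.2: applying Proposition \ref{Iso1} with $E=\C$ gives the isometric isomorphism $\nu_f:\maE_X\otimes_{C^\ast(\cG_X^X)}\maE_{X'}^X(f)\xrightarrow{\cong}\maE_{X,X';\C}(f)$, while the diffeomorphism $\phi:\cG_X\times_{\cG_X^X}\cG_{X'}^X(f)\to \cG^V_{X'}(f)$ provides, through pullback, an isometric isomorphism $\Phi_f:\maE^V_{X'}(f)\xrightarrow{\cong}\maE_{X,X';\C}(f)$. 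The composition $\Phi_f^{-1}\circ\nu_f$ identifies the LHS with $\maE^V_{X'}(f)$.

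For the RHS I will run a parallel argument. The analogue of $\phi$ is the map
$$\psi:\mathcal{G}(f)\times_{\cG'}\cG'_{X'}\longrightarrow \cG^V_{X'}(f),\qquad [(v,\alpha'),\beta']\longmapsto (v,\alpha'\beta'),$$
which is well defined on the quotient by the diagonal $\cG'$-action because $(\alpha'\gamma')(\gamma'^{-1}\beta')=\alpha'\beta'$, and is a bijection with inverse $(v,\gamma')\mapsto[(v,\mathrm{id}_{f(v)}),\gamma']$; a routine verification shows it is in fact a diffeomorphism. Pulling back via $\psi$ then gives an isometric isomorphism $\Phi'$ between $\maE^V_{X'}(f)$ and the maximal completion of $C_c^\infty(\mathcal{G}(f)\times_{\cG'}\cG'_{X'})$. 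The analogue of $\nu_f$ is defined on the algebraic tensor product $C_c^\infty(\mathcal{G}(f))\otimes_{C_c^\infty(\cG')}C_c^\infty(\cG'_{X'})$ by convolution against the Haar system $\lambda'$ of $\cG'$, producing a map $\nu'$ into $C_c^\infty(\mathcal{G}(f)\times_{\cG'}\cG'_{X'})$; mimicking the argument of Proposition \ref{Iso1}, one verifies that $\nu'$ respects the $C_c^\infty(\cG')$-balancing and the right $C_c^\infty({\cG'}_{X'}^{X'})$-module structure, and extends to an isometric isomorphism $\nu':\maE(f)\otimes_{C^\ast(\cG')}\maE_{X'}\xrightarrow{\cong}\maE_{\mathcal{G}(f)\times_{\cG'}\cG'_{X'}}$. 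Composing $(\nu')^{-1}\circ\Phi'\circ\Phi_f^{-1}\circ\nu_f$ then yields the claimed isomorphism, and the left $C^\ast(\cG_X^X)$-action is visibly preserved.

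The main obstacle lies in the inner-product compatibility for $\nu'$. Contrary to the situation in Proposition \ref{composition} and Proposition \ref{Iso1} where the tensor-product algebras were associated with the étale groupoids $\cG_X^X$ and ${\cG'}_{X'}^{X'}$ and produced discrete sums over the transversals, here the tensor product is taken over the non-étale monodromy groupoid $\cG'$ and the balancing is implemented through integration against the smooth Haar system $\lambda'$. Establishing
$$\langle \nu'(\xi_1\otimes\eta_1),\nu'(\xi_2\otimes\eta_2)\rangle_{\maE^V_{X'}(f)}=\langle \eta_1,\pi_{\mathcal{E}(f)}\bigl(\langle \xi_1,\xi_2\rangle_{\maE(f)}\bigr)\eta_2\rangle_{\maE_{X'}}$$
will require unfolding a double Haar integral, applying Fubini, and using the right-invariance of $\lambda'$; Assumption \ref{Etale} enters at the point where the leafwise integration appearing in the $\maE^V_{X'}(f)$-inner product must be matched against the discrete sum over $X\cap L$ produced by the support of the second factor on $\cG'_{X'}$. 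Surjectivity of $\nu'$ is then handled, as at the end of the proof of Proposition \ref{Iso}, by combining the surjectivity of $\Phi'$ with a density argument on elementary tensors of smooth compactly supported functions.
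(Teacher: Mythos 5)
Your proposal is correct and follows essentially the same route as the paper: both reduce the left-hand side to $\maE_{X'}(f)$ via Proposition \ref{Iso1}, and then identify $\maE(f)\otimes_{C^*(\cG')}\maE_{X'}$ with $\maE_{X'}(f)$ by a convolution map --- your composite $(\nu')^{-1}\circ\Phi'$ is, once the diffeomorphism $\psi$ is unwound, exactly the inverse of the paper's map $\upsilon(\xi\otimes\eta)(v,\gamma')=\int_{\alpha'\in\cG'^{r(\gamma')}}\xi(v,\alpha')\eta(\alpha'^{-1}\gamma')\,d\lambda^{r(\gamma')}(\alpha')$. The two points you flag as requiring work (the Haar-integral inner-product identity, and surjectivity via the density of $\pi(f)(C^*(\cG))\maE_{X'}(f)$ together with the intertwining formula $\pi(f)(\xi_1\star\xi_2)\kappa=\upsilon(\xi_1\otimes(\xi_2\bullet\kappa))$) are precisely the ones the paper settles by appealing to \cite{RoyThesis} and \cite{HilsumSkandalis}.
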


\begin{proof}
We have from Proposition \ref{Iso1}, $ \mathcal{E}_{X}{\otimes}_{C^\ast(\mathcal{G}^X_X)}\mathcal{E}^X_{X'}(f) \cong \maE_{X'}(f)$, so it suffices
to show that $\mathcal{E}(f){\otimes}_{C^\ast(\mathcal{G}')} \mathcal{E}_{X'}\cong \maE_{X'}(f)$. We define a map
$\upsilon: C^\infty_c(\mathcal{G}(f)) \otimes_{C_c^\infty(\mathcal{G}_X^X)} C_c^\infty(\mathcal{G}_{X'})\rightarrow C^\infty_c(\mathcal{G}_{X'}(f))$ as follows:
 $$
\upsilon(\xi\otimes\eta)(v,\gamma'):= \int_{\alpha'\in \mathcal{G}'^{r(\gamma')}} \xi(v, \alpha')\eta(\alpha'^{-1}\gamma') d\lambda^{r(\gamma')} (\alpha')
\text{ for $\xi \in C^\infty_c(\mathcal{G}(f)), \eta \in C_c^\infty(\mathcal{G}_{X'})$} $$

One can check that the above map is well-defined and the following property holds (see \cite{RoyThesis}):
$$
<\upsilon(\xi\otimes \eta), \upsilon(\xi\otimes \eta)>= <\eta,\chi_m(<\xi,\xi>) \eta>,
$$
where $\chi_m: C^*(\maG')\rightarrow \maK(\maE_{X'})$ is an isomorphism (cf. \cite{HilsumSkandalis}). The above property then implies that $\upsilon$
induces an isometry on the level of Hilbert-modules $\upsilon: \mathcal{E}(f) \otimes_{C^*(\mathcal{G}_X^X)} \mathcal{E}_{X'}\rightarrow \mathcal{E}_{X'}(f)$.

Lastly, in order to prove surjectivity of $\upsilon$, we proceed as follows (cf. \cite{ConnesSkandalis}, Proposition 4.5). Since $\maE(f)$ implements the
Morita equivalence between $C^*(\maG)$ and $C^*(\maG')$, we have an isomorphism $\pi_f: C^*(\maG)\rightarrow \maK(\maE(f))$ ( \cite{HilsumSkandalis} ,Corollary 7)
given by the following formula:
$$
\pi_f(\phi)\xi(v,\gamma')=  \int_{\alpha\in \maG^v} \phi(\alpha)\psi(s(\alpha),f(\alpha^{-1})\gamma') d\lambda^v(\alpha) \text{ for } \phi \in C_c^\infty(\maG),
\psi \in C_c^\infty(\maG(f)).
$$
Let $\xi_1,\xi_2\in C^\infty_c(\maG(f))$. Let $\xi_1\star \xi_2$ denote the function on $\maG$ given by
$$
 \xi_1\star \xi_2(\gamma)= \int_{\alpha' \in \maG'^{f(r(\gamma))}} \xi_1(r(\gamma), \alpha') \overline{\xi_2(s(\gamma), f(\gamma^{-1})\alpha')}
d\lambda^{f(r(\gamma))}(\alpha')
$$
Denote by $\theta_{\xi_1,\xi_2}$ the operator in $\maK(\maE(f))$ given by $\theta_{\xi_1,\xi_2}\zeta:= \xi_1<\xi_2,\zeta>$. Then a direct calculation shows that
$\theta_{\xi_1,\xi_2}= \pi_f(\xi_1\star \xi_2)$. Now, we also have a representation $\pi(f): C^*(\maG)\rightarrow \maK(\maE^V_{X'}(f))$ defined by
$$
[\pi(f)(\phi)\psi](v,\gamma') = \int_{\alpha\in \maG^v} \phi(\alpha)\psi(s(\alpha),f(\alpha^{-1})\gamma') d\lambda^v(\alpha) \text{ for }
\phi \in C_c^\infty(\maG), \psi \in C_c^\infty(\maG_{X'}(f)).
$$
Now, as in Proposition \ref{Compacts}, we can easily prove the following equality:
$$
\theta_{\phi_1,\phi_2}\psi= \pi(f)(\phi_1\star \phi_2)\psi \text{ for } \phi_1,\phi_2,\psi \in C_c^\infty(\maG_{X'}(f)),
$$
where
$$
\phi_1\star \phi_2(\alpha)= \sum_{\alpha'_1 \in \maG'^{f(r(\alpha))}_{X'}} \phi_1(r(\alpha),\alpha'_1)\overline{\phi_2(s(\alpha),f(\alpha^{-1})\alpha_1)}
\text{ for } \alpha\in \maG.
$$
This implies that $\pi(f)(C^*(\maG))\maE_{X'}(f)$ is dense in $\maE_{X'}(f)$. Therefore, to prove surjectivity it suffices to show that an element of the form $\pi(f)(h)\xi$
lies in the range of $\upsilon$ for all $h\in C^*(\maG)$ and $\xi \in \maE_{X'}(f)$. This is done by a straightforward calculation to prove that
$$
\pi(f)(\xi_1\star\xi_2)\kappa= \upsilon(\xi_1\otimes (\xi_2\bullet \kappa)) \text{ for } \xi_1,\xi_2\in C^\infty_c(\maG(f)), \kappa \in C^\infty_c(\maG_{X'}(f)),
 $$
where
$$
\xi_2\bullet\kappa(\gamma')= \int_{v\in L_{\gamma'}} \sum_{\gamma'_1\in \maG'^{f(v)}_{r(\gamma')}} \overline{\xi_2(v,\gamma'_1)} \kappa(v,\gamma'_1\gamma')
d\lambda^{L_{\gamma'}}(v) \text{ for } \gamma'\in \maG'^{X'}_{X'}.
$$
We refer the reader to \cite{RoyThesis} for the detailed computations.
\end{proof}

\subsection{Poincar\'{e} lemma for foliations}\label{PoincareLemma}

Again $f:(V,\maF)\to (V', \maF')$ is a smooth oriented leafwise homotopy equivalence with homotopy inverse $g$. We denote by $h: [0,1]\times V\to V$ and $h':[0,1]\times V'\to V'$ the oriented smooth leaf-preserving homotopies between $id_V$ and $g\circ f$ on the one hand, and between $id_{V'}$ and $f\circ g$ on the other hand. So,
$$
h(0, \bullet)= id_V, h(1, \bullet)= g\circ f, h'(0, \bullet) = id_{V'} \text{ and } h'(1, \bullet)= f\circ g.
$$
The goal of the present subsection is to prove the following

\begin{theorem}\label{isomorphism}
The maps $f_\phi^*$ and $g_\phi^*$ induce isomorphisms in cohomology, which are inverse of each other when we identify cohomologies using the Morita isomorphisms described in \ref{Morita}. Said differently, an oriented leafwise homotopy equivalence induces a homotopy of the corresponding HP complexes.
\end{theorem}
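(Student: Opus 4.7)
The plan is to reduce the theorem to showing that the composition $(g_\phi^* \otimes \mathrm{id}) \circ f_\phi^*$ induces the identity on cohomology once the Morita identifications of Corollary \ref{Morita} are applied, and symmetrically for $(f_\phi^* \otimes \mathrm{id}) \circ g_\phi^*$. By functoriality (Theorem \ref{Functoriality}), $(g_\phi^* \otimes \mathrm{id}) \circ f_\phi^*$ agrees in cohomology with $(\mathrm{id} \otimes \Xi_{g,f}^{X'}) \circ (g\circ f)_\phi^*$. Combining this with the Hilbert-module isomorphism $\Lambda: \maE_X^X(g\circ f) \xrightarrow{\cong} C^*(\cG_X^X)$ constructed from the leafwise homotopy $h$ in the proof of Corollary \ref{Morita}, it then suffices to show that the resulting endomorphism $\widetilde{(g\circ f)_\phi^*}$ of $\maE_{X,E}$ is the identity on cohomology.

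To prove this, I would deform through a one-parameter family. For $s \in [0,1]$, consider $h_s := h(s,\cdot): V\to V$; this is leafwise smooth, and being leafwise homotopic to $g\circ f$, is uniformly proper. The pull-back $(h_s)_\phi^*: \maE_{X,E} \to \maE_{X,E} \otimes_{C^*(\cG_X^X)} \maE_X^X(h_s)$, composed with the isomorphism $\theta_h^s$ of Proposition \ref{homotopyiso} identifying the target with $\maE_{X,E}$, gives a family of adjointable chain maps $\widetilde{(h_s)_\phi^*}: \maE_{X,E}\to \maE_{X,E}$. At $s=0$, since $\Psi_{\mathrm{id}_V}$ is the identity on smooth forms and $\theta_h^0$ is the canonical identification, we have $\widetilde{(h_0)_\phi^*} = \phi(\Delta)$, which induces the identity on cohomology by Lemma \ref{Independant}. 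At $s=1$ we recover $\widetilde{(g\circ f)_\phi^*}$.

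The key step is to construct an explicit chain homotopy between $\widetilde{(h_0)_\phi^*}$ and $\widetilde{(h_1)_\phi^*}$, adapting the classical Poincar\'{e}--Cartan homotopy operator from the leafwise de Rham complex. Viewing $h$ as a leafwise smooth uniformly proper map from $([0,1]\times V, [0,1]\times \maF)$ to $(V,\maF)$, the standard formula
$$K(\omega):=\int_0^1 h_t^{\ast}(\iota_{\partial_t}\omega)\, dt$$
defines a homotopy operator on leafwise forms satisfying $h_1^* - h_0^* = dK+Kd$. Lifting $K$ to $C_c^\infty(\cG_X, r^*E)$ by fibered integration along the $[0,1]$ direction, composing with $\phi(\Delta)$ to tame unboundedness, and passing through the identifications $\ep_{h_s}$ and $\theta_h^s$, one obtains an adjointable operator $\tilde K$ verifying
$$\widetilde{(h_1)_\phi^*}-\widetilde{(h_0)_\phi^*} \;=\; d_X \tilde K + \tilde K\, d_X$$
on smooth compactly supported sections, modulo operators vanishing in cohomology due to the extra $\phi(\Delta)$ factor (again by Lemma \ref{Independant}).

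The main obstacle will be the analytic verification that $\tilde K$, and in particular its composition with $\phi(\Delta)$, extends to an adjointable operator between the maximal completions and that the above chain homotopy identity persists after completion. Indeed, as remarked just after the construction of $\Psi_f$, the bare pull-back is not even regular in general, which is precisely why the smoothing factor $\phi(\Delta)$ was introduced and why analogous care is needed here. Once this analytic point is settled, $\widetilde{(g\circ f)_\phi^*}$ induces the identity on cohomology. Running the same argument with $h'$ in place of $h$ yields the symmetric identity for $\widetilde{(f\circ g)_\phi^*}$, so $f_\phi^*$ and $g_\phi^*$ are mutually inverse isomorphisms on cohomology, and they therefore constitute a homotopy of the corresponding HP-complexes in the sense of Definition \ref{homeqHPC}.
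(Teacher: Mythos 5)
Your proposal is correct and follows essentially the same route as the paper: reduce via functoriality and the Morita identification to showing that $(g\circ f)_\phi^\sharp$ acts as the identity on cohomology, then establish this by the Poincar\'e--Cartan homotopy operator obtained by pulling back along $h$ to the product foliation on $[0,1]\times V$ and integrating over $(0,1)$, with $\phi(\Delta)$ inserted to guarantee adjointability. The paper's operator $K_\phi^\sharp=\int_{(0,1)}\circ\,\rho_h\circ\ep_h\circ\Psi_h\circ\phi(\Delta)$ and its identity $(g\circ f)_\phi^\sharp-\phi(\Delta)=K_\phi^\sharp d_X+d_XK_\phi^\sharp$ are precisely the $\tilde K$ and chain-homotopy relation you describe.
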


We set as before, $h_s(v):= h(s, v)$ and $h'_s(v'):= h'(s, v')$ for any $s\in [0,1]$. Recall also that we have defined isometric isomorphisms of Hilbert $C^*(\cG_X^X)$-modules (resp. Hilbert $C^*({\cG'}_{X'}^{X'})$-modules):
$$
\theta_h^s: C^*(\cG_X^X) \rightarrow \maE_X^X (h_s) \;  (\text{resp. } \theta_{h'}^s: C^*({\cG'}_{X'}^{X'}) \rightarrow \maE_{X'}^{X'} (h'_s)).
$$
The notations $\theta_h$ and $\theta_{h'}$ stand for $\theta_h^1$ and $\theta_{h'}^1$ respectively.

\begin{lemma}\label{homiso}
The map $\theta_h^s$ is equivariant with respect to the left representation of the $C^*$-algebra $C^*(\cG_X^X)$ and
the well defined composite map
$$
\rho^s_h := \delta\circ (I\otimes_{C^*(\cG_X^X)} (\theta_h^s)^{-1}): \mathcal{E}_{X,E}\otimes_{C^*(\cG_X^X)} \mathcal{E}^X_X(h_s)\longrightarrow \mathcal{E}_{X,E}
$$
commutes with the differential on  $\mathcal{E}_{X,E}$.
\end{lemma}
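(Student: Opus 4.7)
The plan is to handle the lemma in three steps corresponding to its three assertions: equivariance of $\theta_h^s$, well-definedness of $\rho_h^s$, and commutation with the differential.

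First, I would prove equivariance by a direct calculation on smooth compactly supported sections. Given $\phi\in C_c^\infty(\maG_X^X)$ and $\xi\in C_c^\infty(\maG_X^X)$, unrolling convolution on $C^*(\maG_X^X)$ gives
$$
\theta_h^s(\phi\xi)(x,\gamma)\;=\;\sum_{\alpha\in\maG_X^x}\phi(\alpha)\,\xi\!\left(\alpha^{-1}(\gamma_x^s)^{-1}\gamma\right),
$$
while applying the representation $\pi_{h_s}$ to $\theta_h^s(\xi)$ yields
$$
\pi_{h_s}(\phi)\theta_h^s(\xi)(x,\gamma)\;=\;\sum_{\alpha\in\maG_X^x}\phi(\alpha)\,\xi\!\left((\gamma_{s(\alpha)}^s)^{-1}h_s(\alpha^{-1})\gamma\right).
$$
Equality of the two expressions reduces to the groupoid identity $\gamma_x^s\,\alpha = h_s(\alpha)\,\gamma_{s(\alpha)}^s$ in $\maG$, for every $\alpha\in\maG_X^x$. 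Both sides are classes of paths from $s(\alpha)$ to $h_s(x)$, and the square $F(u,t):=h(t,\alpha(u))$ furnishes an explicit leafwise homotopy between them: its four edges are $\alpha$, $h_s\circ\alpha$, $t\mapsto h(t,s(\alpha))$, and $t\mapsto h(t,x)$, concatenated in exactly the two ways in question. Because $h$ is leaf-preserving, this square stays in a single leaf of $(V,\maF)$, so the identity holds in the monodromy groupoid. A standard density/continuity argument extends equivariance to all of $C^*(\maG_X^X)$.

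Second, once equivariance is established, $\theta_h^s$ is a $C^*(\maG_X^X)$-bimodule isomorphism (right-linearity being automatic from the definition). Therefore $(\theta_h^s)^{-1}$ descends through the internal tensor product and $I\otimes_{C^*(\maG_X^X)}(\theta_h^s)^{-1}$ is a well-defined adjointable isomorphism onto $\maE_{X,E}\otimes_{C^*(\maG_X^X)}C^*(\maG_X^X)$; composing with the canonical $\delta$ yields the announced $\rho_h^s$.

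Third, the differential on $\maE_{X,E}\otimes_{C^*(\maG_X^X)}\maE_X^X(h_s)$ inherited from the HP-complex structure is $d_X\otimes I$, acting only on the first factor. Since $(\theta_h^s)^{-1}$ is $C^*(\maG_X^X)$-linear and acts only on the second factor, it commutes with any operator of the form $d_X\otimes I$ on elementary tensors of smooth compactly supported sections; closedness of $d_X$ then propagates the commutation to its full domain. The canonical map $\delta$ manifestly intertwines $d_X\otimes I$ on $\maE_{X,E}\otimes_{C^*(\maG_X^X)}C^*(\maG_X^X)$ with $d_X$ on $\maE_{X,E}$. Composing these two intertwining relations gives $\rho_h^s\circ(d_X\otimes I)=d_X\circ\rho_h^s$.

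The main (and really only) obstacle is the groupoid identity $\gamma_x^s\alpha=h_s(\alpha)\gamma_{s(\alpha)}^s$; the rest follows from formal properties of the internal tensor product and the fact that $d_X$ is a closed $C^*(\maG_X^X)$-linear regular operator acting purely on the first factor.
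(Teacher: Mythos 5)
Your proof is correct and follows essentially the same route as the paper's: the same direct convolution computation reducing equivariance to the groupoid identity $\gamma_x^s\alpha=h_s(\alpha)\gamma_{s(\alpha)}^s$, and the same observation that $(\theta_h^s)^{-1}$ acts only on the second tensor factor while $\delta$ intertwines $\td\otimes I$ with $\td$ by $C^*(\cG_X^X)$-linearity of the differential. The only difference is that you actually prove the key identity via the leafwise homotopy square $F(u,t)=h(t,\alpha(u))$, whereas the paper cites it from Proposition 5.1.7 of \cite{RoyThesis}; your argument is a welcome self-contained substitute.
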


\begin{proof}
Let $\phi, \xi\in C_c(\cG_X^X)$. Then,
\begin{eqnarray*}
\theta^s_h(\phi\ast \xi)(x,\gamma) &=& (\phi\ast\xi)((\gamma_x^s)^{-1}\gamma)\nonumber\\
&=& \sum_{\gamma_1\in \mathcal{G}^x_X} \phi(\gamma_1)\xi(\gamma_1^{-1}(\gamma^s_x)^{-1}\gamma)
\end{eqnarray*}
On the other hand,
$$
\pi(\phi)\theta^s_h(\xi)(x,\gamma) = \sum_{\alpha\in \mathcal{G}^x_X} \phi(\alpha)\xi((\gamma^s_{s(\alpha)})^{-1}\circ ({f}\circ{g})(\alpha^{-1})\circ \gamma).
$$
Putting $\gamma_1= (\gamma^s_x)^{-1} ({f}\circ {g})(\alpha) \gamma^s_{s(\alpha)}$ we see from the proof of {Proposition 5.1.7} of
\cite{RoyThesis}
 that $\gamma_1=\alpha$ and hence,
$$
\pi(\phi)\theta^s_h(\xi)(x,\gamma) = \sum_{\gamma_1\in \mathcal{G}^x_X} \phi(\gamma_1)\xi(\gamma_1^{-1} (\gamma^s_x)^{-1} \gamma).
$$
To see that $\rho^s_h$ is a chain map, we compute as follows:
\begin{eqnarray*}
\rho^s_h\circ (\td \otimes I)&=& \delta\circ (I\otimes (\theta^s_h)^{-1})\circ (\td\otimes I)\\
&=& \delta\circ  (\td\otimes I)\circ (I\otimes (\theta^s_h)^{-1})
\end{eqnarray*}
But since $\td$ is $C_c^\infty(\cG_X^X)$-linear, we have:
$$
\delta\circ (\td\otimes I) = \td \circ \delta.
$$
Therefore from the two computations above we get the desired result.
\end{proof}

Let $X_0:=\{0\}\times X \subset [0,1]\times V$ be the complete transversal of the product foliation on $[0,1]\times V$. We denote for simplicity this foliation by $[0,1]\times \mathcal{F}$. Let $\hat{\mathcal{G}}$ be the monodromy groupoid of the foliation $([0,1]\times V,[0,1]\times \mathcal{F})$. Then  $\hat{\mathcal{G}}$ can be, and will be, identified as a smooth groupoid with the cartesian product $\mathcal{G}\times [0,1]^2$ of the groupoid $\cG$ with the product groupoid $[0,1]^2$.

Define maps $\ep_h$ and $\Psi_h$ for the leafwise map $h$, as we did in Section \ref{HomotopyModules} for $f$. We note that using a similar proof as for Lemma \ref{homotopyiso}, one has an isomorphism
$$
\rho_h: \mathcal{E}_{X_0,\hat{E}}\otimes_{C^*(\hat\cG_{X_0}^{X_0})}\mathcal{E}^{X_0}_X(h)\longrightarrow \mathcal{E}_{X_0,\hat{E}} \text{ where }\hat{E}:= \Lambda^* T^*([0,1]\times \mathcal{F}).
$$
We set $H^*:= \rho_h \circ \ep_h \circ \Psi_h$,
 so this is the map from $\maE_{X,E}$ to $\maE_{X_0, {\hat E}}$ given by
$$
\mathcal{E}_{X,E}\stackrel{\Psi_h}{\rightarrow} \mathcal{E}^{[0,1]\times V}_{X, {\hat E}}(h)\stackrel{\ep_h}{\rightarrow} \mathcal{E}_{X_0,\hat{E}}\otimes_{C^*(\hat\cG_{X_0}^{X_0})} \mathcal{E}^{X_0}_X(h)  \stackrel{\rho_h}{\rightarrow} \mathcal{E}_{X_0,\hat{E}}.
$$
Notice that $\hat{\mathcal{G}}_{X_0}$ is identified with $[0,1]\times \mathcal{G}_X$ while  $\hat{\mathcal{G}}^{X_0}_{X_0}$ is identified with $\mathcal{G}^X_X$. We finally get as in Section \ref{HomotopyModules} a well defined adjointable chain map
$$
H_\phi^\sharp:=H^*\circ \phi(\Delta) = \rho_h \circ H_\phi^* : \mathcal{E}_{X,E}\longrightarrow \mathcal{E}_{X_0,\hat{E}}.
$$
We define in the same way $(h_s)_\phi^\sharp: \mathcal{E}_{X,E}\longrightarrow \maE_{X,E}$ for any $s\in [0,1]$.
Notice that integration over $(0,1)$ yields a well defined degree $-1$ linear map
$$
\int_{(0,1)} : C_c^\infty (\hat\cG_{X_0}, r^*\hat{E}) \longrightarrow C_c^\infty (\cG_X, r^*E).
$$
It is clear from its very definition  that $\int_{(0,1)}$ is $C_c^\infty (\cG_X^X)$-linear when we identify as we did $\hat\cG_{X_0}^{X_0}$ with $\cG_X^X$. Moreover, a direct inspection shows that $\int_{(0,1)}$ extends to an adjointable operator
$$
\int_{0,1)} : \maE_{X_0, \hat E} \longrightarrow \maE_{X,E},
$$
with the adjoint given by the extension of $\eta\mapsto \eta \wedge dt$ where $t$ is the variable in $[0,1]$ and $\eta$ is understood via its usual pull-back. We set
$$
K_\phi^\sharp := \int_{(0,1)} \circ H_\phi^\sharp : \mathcal{E}_{X,E}\longrightarrow \maE_{X,E}.
$$
So, $K_\phi^\sharp$ is an adjointable operator.

\begin{lemma}
The following relation holds
$$
(g\circ f)_\phi^\sharp - \phi(\Delta) = K_\phi^\sharp \circ d_X + d_X \circ K_\phi^\sharp.
$$
\end{lemma}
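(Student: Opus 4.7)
The strategy is a leafwise, lifted version of the classical Poincar\'e chain homotopy $i_1^*-i_0^* = dK+Kd$ on the cylinder $[0,1]\times V$. I would first work at the level of smooth compactly supported sections: the operators $\phi(\Delta)$, $\Psi_h$, $\ep_h$ and $\rho_h$ all preserve smoothness and compact support (the former because the Fourier transform of $\phi$ is compactly supported, so the Schwartz kernel of $\phi(\Delta)$ is smooth and compactly supported in $\maG$, as already used in the adjointability proof of $\Psi^\phi_f$). Since every operator appearing in the stated identity is adjointable (or at least closable), it will extend from the dense subspace $C_c^\infty(\maG_X,r^*E)$ to $\maE_{X,E}$ by continuity.

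Second, I would exploit the product structure of the foliation $[0,1]\times\maF$. Under the identification $\hat\maG_{X_0}\cong [0,1]\times\maG_X$, a leafwise form $\omega\in C_c^\infty(\hat\maG_{X_0},r^*\hat E)$ decomposes canonically as $\omega=\omega_0+dt\wedge\omega_1$, where $\omega_0,\omega_1$ are sections of $r^*E$ depending on $t$. The lifted leafwise differential then reads
\[
\hat d\,\omega = \td\omega_0 + dt\wedge(\partial_t\omega_0 - \td\omega_1),
\]
and fiber integration sends $\omega$ to $\int_0^1\omega_1(t,\cdot)\,dt$. Combining these with the fundamental theorem of calculus gives the \emph{cylinder identity}
\[
\td\!\int_{(0,1)}\omega\;+\;\int_{(0,1)}\hat d\,\omega\;=\;\omega_0(1,\cdot)-\omega_0(0,\cdot),
\]
which is just the classical Poincar\'e chain homotopy on $[0,1]\times V$ lifted to $\hat\maG$.

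Third, I would apply this identity to $\omega:=H^\sharp_\phi\eta$ for $\eta\in C_c^\infty(\maG_X,r^*E)$. Two verifications are required. \textbf{(a)} $\hat d(H^\sharp_\phi\eta)=H^\sharp_\phi(d_X\eta)$: this combines the chain-map property of $\Psi_h$ (established in the previous section for any leafwise map) with the commutation $\phi(\Delta)d_X=d_X\phi(\Delta)$ from Lemma \ref{Independant}(i), together with the fact that the Morita identifications $\ep_h,\rho_h$ are chain maps by construction. \textbf{(b)} The restriction of $H^\sharp_\phi\eta$ at $t=s$ (i.e.\ the component $\omega_0(s,\cdot)$, reinterpreted in $\maE_{X,E}$) equals $(h_s)^\sharp_\phi\eta$. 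At $s=1$, $h_1=g\circ f$, so this gives $(g\circ f)^\sharp_\phi\eta$. At $s=0$, $h_0=\mathrm{id}_V$, the basepoint path $\gamma^0_x$ is the trivial class, so the map $\theta^0_h$ of Proposition \ref{homotopyiso} reduces to the canonical diffeomorphism $\maG_X^X(\mathrm{id}_V)\cong\maG_X^X$ from the remark following it; consequently $\Psi_{h_0}$, $\ep_{h_0}$ and $\rho_{h_0}$ all become identity identifications, giving $(h_0)^\sharp_\phi\eta=\phi(\Delta)\eta$.

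Plugging (a) and (b) into the cylinder identity yields
\[
d_X K^\sharp_\phi\eta+K^\sharp_\phi d_X\eta=(g\circ f)^\sharp_\phi\eta-\phi(\Delta)\eta
\]
on smooth compactly supported $\eta$, and hence on all of $\maE_{X,E}$ by adjointability and density. The main obstacle is the bookkeeping in step (b): one has to check that the isomorphisms $\rho_h$ and $\ep_h$ restrict to a slice $\{s\}\times V$ in exactly the way prescribed by the $\theta^s_h$ of Proposition \ref{homotopyiso}, so that the slice-wise fiber-integration formula on $\hat\maG_{X_0}$ literally translates into the stated relation in $\maE_{X,E}$; but this is purely unwinding the definitions once the trivial nature of the basepoint path $\gamma^0_x$ at $s=0$ is noted.
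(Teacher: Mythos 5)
Your proposal is correct and follows essentially the same route as the paper: the lifted cylinder identity $\td\int_{(0,1)}\xi+\int_{(0,1)}\hat d\xi=\xi_1\circ i_1-\xi_1\circ i_0$ applied to the pull-back $H^*\eta$ under the homotopy $h$, with the boundary slices identified as $(g\circ f)^*$ and the identity (the paper carries out your step (b) by computing the explicit formula $H^*_x(\xi)(t,\gamma)=({}^t h_*)_{(t,r(\gamma))}(\xi(\theta(t,\gamma)\gamma^t_{s(\gamma)}))$ and checking the $t=1$ restriction), before composing with $\phi(\Delta)$.
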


\begin{proof}
First notice that for any $x\in X$, the map
$$
H^*_x:C_c^\infty(\mathcal{G}_x,r^*E)\longrightarrow C_c^\infty(\hat{\mathcal{G}}_{(0,x)},r^*\hat{E})=  C_c^\infty([0,1]\times \mathcal{G}_{x},r^*\hat{E}).
$$
coincides with the composite map
$$
C_c^\infty(\mathcal{G}_X,r^* E) \xrightarrow{\Psi_h} C_c^\infty(\mathcal{G}^{[0,1]\times V}_X(h), (r\circ pr_1)^* \hat{E}) \xrightarrow{\Phi_h} C_c^\infty(\hat{\mathcal{G}}_{X_0}\times_{\hat{\mathcal{G}}_{X_0}^{X_0}} \mathcal{G}^{X_0}_X(h), (r\circ pr_1)^* \hat{E} ) \xrightarrow{\hat\rho_h }  C_c^\infty(\hat{\mathcal{G}}_{X_0},\hat{E} )
$$
where we set $\hat\rho_h=\rho_h\circ \nu_h^{-1}$ and use the notations of the previous section.
We compute
\begin{eqnarray*}
(\Phi_h\circ \Psi_h) (\xi)[(t,\gamma), (s(t,\gamma),\alpha)]&=& \Psi_h(\xi)(r(t,\gamma), {h}(t,\gamma)\alpha)  \\
&=& (^t{h}_\ast)_{(t,r(\gamma))} (\xi({h}(t,\gamma)\alpha) )
\end{eqnarray*}
Therefore, since  $\theta_h : \mathcal{G}^{X_0}_X(h)\rightarrow \mathcal{G}^{X_0}_{X_0}$ is given by $((0,x),\gamma)\mapsto (0,0,\gamma)$, we get
\begin{eqnarray*}
(I\otimes \theta_h^{-1})\circ  (\ep_h\circ \Psi_h) (\xi)[(t,\gamma), (0,\alpha)]&=& (\Phi_h\circ \Psi_h) (\xi)[(t,\gamma), (s(t,\gamma), (0,\alpha))]\\
&=& (^t{h}_\ast)_{(t,r(\gamma)} (\xi({h}(t,\gamma)\alpha) )
\end{eqnarray*}
Hence,
\begin{eqnarray*}
H^* (\xi)(t,\gamma)&=& (I\otimes \theta_h^{-1})(\Phi_h\circ \Psi_h) (\xi)[(t,\gamma),1_{(0,\gamma)}] \\
&=& (^t{h}_\ast)_{(t,r(\gamma))} (\xi({h}(t,\gamma)1_{0,\gamma}) )\\
&=& (^t{h}_\ast)_{(t,r(\gamma))} (\xi(\theta(t,\gamma)\gamma^t_{s(\gamma)}) )
\end{eqnarray*}
We have used the following relations
$$
{h}(t,\gamma):= {h}(t,0,\gamma)= \gamma_{r(\gamma)}^t\gamma, \;\; h(t,s,\gamma)= \gamma_{r(\gamma)}^t\gamma(\gamma_{s(\gamma)}^s)^{-1},\;\; \theta(t,\gamma):= \gamma^t_{r(\gamma)}\gamma(\gamma^t_{s(\gamma)})^{-1}\text{ and }
\gamma_{r(\gamma)}^s\gamma= h_t(\gamma) \gamma^s_{s(\gamma)}.
$$
Therefore, $H_x^*$ is simply given by the formula:
\begin{equation}\label{homlemma}\
 H^*_x(\xi)(t,\gamma)=(^t{h}_\ast)_{(t,r(\gamma)}(\xi(\theta(t,\gamma)\circ \gamma^t_{s(\gamma)})),
\end{equation}
which explains the notation.
Now, computing in local coordinates, we check that the usual Poincar\'e equation holds, more precisely, in our setting, we have  for $\xi=\xi_1 + dt\wedge \xi_2$ with $\xi_1, \xi_2\in C_c^\infty (\hat\cG_x, r^*{\hat E})$,
$$
\td \int_{(0,1)} \xi  + \int_{(0,1)} d_{\hat\cG_{(0,x)}} \xi = \xi_1\circ i_1 - \xi_1 \circ i_0.
$$
Here $i_s: \cG_x \hookrightarrow \hat\cG_{(0,x)}$ is the map $i_s(\gamma):=(s, 0, \gamma)$. Applying this relation to $\xi=H^*_x(\eta)$  for $\eta\in C_c^\infty(\mathcal{G}_x,r^\ast E)$, we get
$$
\int_{(0,1)} d_{\hat\cG_{(0,x)}} H^\ast_x(\eta)+ \td (\int_{(0,1)} H^\ast_x(\eta))= H^\ast(\eta)_1|_{\{1\}\times \mathcal{G}_x}- H^\ast(\eta)_1|_{\{0\}\times \mathcal{G}_x}.
$$
Here we have used the notation $H^*\eta= (H^*\eta)_1 + dt\wedge (H^*\eta)_2$.

But we know from Equation \eqref{homlemma} that
$$
H^\ast(\eta)(1,\gamma)=(^t{h_\ast})_{(1,r(\gamma))} [\eta(\theta(1,\gamma)\circ \gamma_{s(\gamma)})] \text{ and } h^\ast_1(\eta)(\gamma)=(^t{h_{1,\ast}})_{r(\gamma)} [\eta(h_1(\gamma)\circ \gamma_{s(\gamma)})].
$$
To finish the proof, we thus only need to check that $H^\ast(\eta)_1|_{(1,\gamma)} = h_1^\ast(\eta)(\gamma)$, for then, we deduce:
$$
h_1^\ast(\eta)- h_0^\ast(\eta) =  \int_{(0,1)} d_{\hat{\mathcal{G}}_{(0,x)}} H^\ast_x(\eta)+ d_{\hat{\mathcal{G}}_{(0,x)}}(\int_{(0,1)} H^\ast_x(\eta)),
$$
and composing with $\phi(\Delta)$ on the right would end the proof.

Now to prove that $H^\ast(\eta)_1|_{(1,\gamma)} = h_1^\ast(\eta)(\gamma)$, we first notice  that if $\xi= \xi_1+\xi_2\wedge dt$ then $\xi_1= (^t{i_{1,\ast}})\xi$.
As $h_1= h\circ i_1$, we have $(^t{i_{1,\ast}})_x\circ (^t{h_{\ast}})_{(1,x)} = (^t{h_{1,\ast}})_x$, and so
\begin{eqnarray*}
H^\ast(\eta)_1 |_{(1,\gamma)}&= & (^t{i_{1,\ast}})_{r(\gamma)} (H^\ast(\eta) (1,\gamma)) \\
&= & (^t{i_{1,\ast}})_{r(\gamma)} \circ (^t{h_{\ast}})_{(1,r(\gamma))} (\eta(h(1,\gamma)\gamma_{s(\gamma)})\\
&=& (^t{h_{1,\ast}})_{r(\gamma)} (\eta(h(1,\gamma)\gamma_{s(\gamma)}) \\
&=& h_1^\ast (\eta)(\gamma)
\end{eqnarray*}
and the proof is thus complete.
\end{proof}

\begin{corollary}
$(g\circ f)_\phi^\sharp$ and $(f\circ g)_\phi^\sharp$ induce the identity maps on the cohomologies of the complexes $(\mathcal{E}_X,\td)$ and $(\mathcal{E}'_{X'},\td')$ respectively.
\end{corollary}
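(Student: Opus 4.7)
The corollary should follow almost immediately from the chain homotopy relation just established. My plan is as follows.

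First, I would invoke the preceding lemma, which gives
\[
(g\circ f)_\phi^\sharp - \phi(\Delta) \;=\; K_\phi^\sharp \circ d_X + d_X \circ K_\phi^\sharp,
\]
as adjointable operators on $\mathcal{E}_{X,E}$. Since $K_\phi^\sharp = \int_{(0,1)}\circ H_\phi^\sharp$ is adjointable and shifts degree by $-1$, the right-hand side is a genuine chain homotopy between the adjointable chain maps $(g\circ f)_\phi^\sharp$ and $\phi(\Delta)$. Consequently $(g\circ f)_\phi^\sharp$ and $\phi(\Delta)$ induce the same map $H^\bullet(\mathcal{E}_{X,E},d_X)\to H^\bullet(\mathcal{E}_{X,E},d_X)$.

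Next, I would appeal to the first item of Lemma \ref{Independant}, which tells us that for any $\phi$ that is the Fourier transform of a test function with $\phi(0)=1$, the operator $\phi(\Delta)$ is an adjointable chain map inducing the identity on the unreduced cohomology of $(\mathcal{E}_{X,E},d_X)$. Combining this with the previous step yields that $(g\circ f)_\phi^\sharp$ induces the identity map on $H^\bullet(\mathcal{E}_{X,E},d_X)$, which is the first assertion.

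For the second assertion, I would run exactly the same argument, with the roles of $(V,\mathcal{F})$ and $(V',\mathcal{F}')$ interchanged, replacing the homotopy $h:[0,1]\times V \to V$ between $\mathrm{id}_V$ and $g\circ f$ by the homotopy $h':[0,1]\times V' \to V'$ between $\mathrm{id}_{V'}$ and $f\circ g$. This produces an adjointable operator $K'^{\sharp}_\phi$ on $\mathcal{E}_{X',E'}$ with
\[
(f\circ g)_\phi^\sharp - \phi(\Delta') \;=\; K'^{\sharp}_\phi \circ d_{X'} + d_{X'} \circ K'^{\sharp}_\phi,
\]
and again $\phi(\Delta')$ induces the identity on $H^\bullet(\mathcal{E}_{X',E'},d_{X'})$ by Lemma \ref{Independant}.

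There is essentially no obstacle beyond careful bookkeeping: the construction of the homotopy operator $K_\phi^\sharp$ via integration over $(0,1)$ on the cylinder groupoid, the adjointability of $\int_{(0,1)}$, and the interpretation of $\phi(\Delta)$ as a chain map inducing the identity on cohomology have all been handled in the preceding material. The only point worth double-checking is that the same construction applied to $h'$ genuinely produces an adjointable homotopy between $(f\circ g)_\phi^\sharp$ and $\phi(\Delta')$; but since the construction of $H^\sharp_\phi$, $\rho_h$, $\varepsilon_h$ and $\Psi_h$ is entirely symmetric in the two foliations, this is automatic and no new computation is needed.
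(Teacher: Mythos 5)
Your proposal is correct and follows the paper's own argument essentially verbatim: the chain homotopy identity from the preceding lemma reduces the claim to the fact that $\phi(\Delta)$ induces the identity on cohomology (Lemma \ref{Independant}), and the second assertion is obtained by interchanging the roles of $f$ and $g$. No further comment is needed.
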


\begin{proof}
The first assertion  is immediate from the previous proposition and the fact that $\phi(\Delta)$ induces the identity on cohomology, while $K^\sharp\circ d+ d\circ K^\sharp$ is zero on cohomology. The second one is a consequence of the first one permuting the roles of $f$ and $g$.
\end{proof}

\subsection{Compatibility with Poincar\'{e} duality}

In this section we prove the compatibility of the pullback map $f_\phi^*$ with the Poincar\'{e} duality operators $T'_{X'}$ and $T_X\otimes Id$ of the HP-complexes $(\maE_{X',E'},d'_{X'})$ and $(\maE_{X,E}\otimes \maE^{X}_{X'}(f),d_{X}\otimes Id)$, respectively, in order to achieve the proof that the map $f^*_\phi$ is indeed a homotopy equivalence of HP-complexes, as per Definition \ref{homeqHPC}. Let $(f^*_\phi)^\sharp$ be the adjoint of $f^*_\phi$. We will show that
\begin{proposition}
The maps $f^*_\phi T'_{X'} (f^*_\phi)^\sharp$ and $T_X\otimes Id$ induce the same map on cohomology.
\end{proposition}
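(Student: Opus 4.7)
My plan is to exhibit both operators as representing the same leafwise Poincaré duality on cohomology. Both $f^*_\phi T'_{X'}(f^*_\phi)^\sharp$ and $T_X\otimes Id$ are adjointable degree $-n$ chain maps from the HP-complex $(\maE_{X,E}\otimes_{C^*(\cG^X_X)}\maE^X_{X'}(f),d_X\otimes Id)$ to its dual complex, and so they induce well-defined maps on cohomology. By continuity and density it suffices to check the agreement on smooth compactly supported representatives, and by Lemma \ref{Independant} we may work modulo the regularization $\phi(\Delta')$, which induces the identity on cohomology.

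On smooth forms the composite $(\ep_f\circ\Psi_f)\,T'_{X'}\,(\ep_f\circ\Psi_f)^\sharp$ is the conjugation by the leafwise pull-back $f^*$ of the lift of the leafwise Hodge star $\star'$ on $(V',\maF')$. Under the $C^*({\cG'}^{X'}_{X'})$-valued inner product on $\maE_{X,E}\otimes \maE^X_{X'}(f)$, one then checks that the cohomological bilinear form associated with $f^*_\phi T'_{X'}(f^*_\phi)^\sharp$ is the pull-back via $f$ of the leafwise wedge-and-integrate pairing on $(V',\maF')$, while the one associated with $T_X\otimes Id$ is the leafwise pairing on $(V,\maF)$ tensored with the Morita inner product on $\maE^X_{X'}(f)$. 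The equality of these two cohomological forms reduces to the classical identity $\int_L f^*\alpha\wedge f^*\beta=\deg(f|_L)\cdot\int_{f(L)}\alpha\wedge\beta$ along each leaf $L$. Since $f$ is an \emph{oriented} leafwise homotopy equivalence, its restriction $f|_L:L\to f(L)$ is an oriented homotopy equivalence of closed oriented manifolds, hence of degree $1$; the two pairings therefore coincide on leafwise cohomology, which implies the agreement of the two duality operators on cohomology.

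The main obstacle is to carry out this argument honestly inside the Hilbert $C^*$-module framework, and in particular to transport the leafwise degree-one identity through the Morita bimodule $\maE^X_{X'}(f)$ and the isomorphism $\pi_f$ of Proposition \ref{Compacts}. Concretely, I would produce an explicit adjointable chain null-homotopy of $f^*_\phi T'_{X'}(f^*_\phi)^\sharp-T_X\otimes Id$ by combining the cohomology-level equality of pairings above with the Poincaré-type chain homotopy $K^\sharp_\phi$ of Section \ref{PoincareLemma} applied, via Theorem \ref{Functoriality}, to the leafwise homotopy $g\circ f\sim id_V$. The delicate part is the bookkeeping of signs, degree shifts, adjointability of the intermediate operators, and the Morita identifications in the tensor factor $\maE^X_{X'}(f)$, all of which must be tracked consistently so that the resulting null-homotopy is an honest adjointable operator between the relevant Hilbert modules and their duals.
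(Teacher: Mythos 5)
You have correctly identified the geometric heart of the matter --- that the two duality operators should agree because an oriented leafwise homotopy equivalence intertwines the leafwise wedge-and-integrate pairings, i.e.\ ``preserves fundamental cycles'' --- and this is indeed the key input in the paper's own argument. But two points need repair. First, your reduction to the identity $\int_L f^*\alpha\wedge f^*\beta=\deg(f|_L)\int_{f(L)}\alpha\wedge\beta$ for \emph{closed} oriented manifolds is not literally available: the leaves, and the monodromy covers $\cG_x$ and $\maG^{V'}_{x}(g)$ over which the relevant inner products are actually integrals, are non-compact in general. What is really used is that a \emph{uniformly proper} oriented leafwise homotopy equivalence preserves integrals of compactly supported top-degree forms on these covers; this is how the invariance $\int_{\maG_{r(\gamma)}}\lambda^*_{g,\phi}(\eta\wedge\pi^*_{2,g,\phi}\star\xi)=\int_{\maG^{V'}_{r(\gamma)}(g)}\eta\wedge\pi^*_{2,g,\phi}\star\xi$ enters the computation.

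Second, and more seriously, the step you defer as ``delicate bookkeeping'' is the entire content of the proof: you never compute $(f^*_\phi)^\sharp$. The argument hinges on identifying this adjoint, on degree-$k$ cohomology, with the explicit operator $(-1)^{k(p-k)}\,T'_{X'}\circ\Lambda'\circ(g^*_\phi\otimes Id)\circ(T_X\otimes Id)$, where $g$ is the homotopy inverse and $\Lambda'=\rho^1_{h'}\circ(Id\otimes(\Xi^{X}_{f,g})^{-1})$ is the Morita identification; this is established by an inner-product computation showing $(\lambda^*_{g,\phi})^\sharp=(-1)^{k(p-k)}\,T'_g\,\Psi^\phi_g\,T_X$. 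Once that identification is in hand, the conclusion follows from $(T'_{X'})^2=\pm1$, the functoriality Theorem \ref{Functoriality}, and the Poincar\'e Lemma of Section \ref{PoincareLemma} --- no separate null-homotopy of $f^*_\phi T'_{X'}(f^*_\phi)^\sharp-T_X\otimes Id$ is needed. It is moreover unclear how the chain homotopy $K^\sharp_\phi$ for $(g\circ f)^\sharp_\phi-\phi(\Delta)$ alone could supply such a null-homotopy, since it acts on $\maE_{X,E}$ and by itself sees neither the duality operators nor the tensor factor $\maE^X_{X'}(f)$. Finally, note that passing from an equality of cohomological pairings to an equality of the induced operators requires either the explicit identification of the adjoint just described or a nondegeneracy statement for the pairing on cohomology that you have not justified (and that is close to what is being proved); so the proposal as written leaves the essential step unproved.
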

\begin{proof}
Recall that $f^*_\phi= \epsilon_f\Psi^\phi_f$, with $\Psi^\phi_f= \Psi_f\circ \phi(\Delta'): \maE_{X',E'}\rightarrow \maE^V_{X',E}(f)$. We note that the map $\Psi^\phi_f :C^\infty_c(\maG'_{X'},r^*E')\rightarrow C^\infty_c(\maG^V_{X'}(f), \pi_1^*E)$ can also be described as $\pi_{2,f}^*\circ \phi(\Delta')$, where $\pi_{2,f}^*$ is the leafwise pullback map associated with
$\pi_{2,f}= \left[ (\pi_{2,f})_{x'}: \maG_{x'}^V(f)\rightarrow \maG'_{x'}\right]$, where $(\pi_{2,f})_{x'}(v,\gamma')= \gamma'$. Similarly, we define $\pi_{2,g}$ and its induced map $\pi^*_{2,g}$ associated with the map $g$. We shall denote the composition $\pi_{2,f}^*\circ \phi(\Delta')$ as $\pi_{2,f,\phi}^*$.

Now $\pi_{2,f}$ is a leafwise homotopy equivalence for the induced foliations on $\maG_{X'}^V(f)$ and $\maG'_{X'}$ with a homotopy inverse given for instance by $\lambda_f: \gamma'\mapsto (g(r(\gamma')), (f\circ g)(\gamma'))$. As a consequence, we have
$$
\int_{\maG^V_{x'}(f)}\Psi_f(\omega')= \int_{\maG^V_{x'}(f)}(\pi^*_{2,f})_{x'}(\omega')= \int_{\maG'_{x'}} \omega',
$$
for any leafwise top dimensional closed differential form $\omega'$ on $\maG'_{x'}$. The map $\lambda_f$ induces a pullback on forms, which we denote by $\lambda_f^*$. Similarly, one can define a map $\lambda_{g}$ and its induced map $\lambda^*_{g}$ associated with the map $g$. We shall denote the composition $\phi(\Delta')\circ \lambda_{f}^* $ as $\lambda_{f,\phi}^*$ and $\phi(\Delta)\circ \lambda_{g}^* $ as $\lambda_{g,\phi}^*$.

Consider the following map $\Gamma: \maE_{X,E}\otimes \maE^X_{X'}(f)\rightarrow \maE_{X',E'}$ given by the composition
$$
\maE_{X,E}\otimes \maE^X_{X'}(f)\xrightarrow{T_X\otimes id} \maE_{X,E}\otimes \maE^X_{X'}(f)\xrightarrow{g^*_\phi\otimes Id} \maE_{X',E'}\otimes \maE^{X'}_X(g)\otimes \maE^X_{X'}(f)\xrightarrow{\Lambda'} \maE_{X',E'}\xrightarrow{T'_{X'}} \maE_{X',E'}
$$
where $\Lambda'=\rho^1_{h'}\circ (Id\times (\Xi^{X}_{f,g})^{-1})$ and $\rho^s_{h'}$ is the isometric isomorphism of Hilbert modules
defined analogously as $\rho^s_{h}$ for $s\in[0,1]$, given in Lemma \ref{homiso}.

We claim that
\begin{equation}\label{Claim}
\Gamma= (-1)^{k(p-k)} (f^*_{\phi})^\sharp \text{ on  degree $k$-cohomology.}
\end{equation}

Assuming the claim \ref{Claim}, it is easy to conclude the proof. Indeed, we then have on cohomology of degree $k$:
\begin{eqnarray*}
f^*_{\phi}T'_{X'}(f^*_{\phi})^\sharp&=&f^*_{\phi}T'_{X'}[(-1)^{k(p-k)}\Gamma]\\
&=&  (-1)^{k(p-k)} f^*_{\phi}T'_{X'} T'_{X'} \rho^1_{h'} (Id\times (\Xi^{X}_{f,g})^{-1}) (g^*_{\phi}\otimes Id) (T_X\otimes Id)
\end{eqnarray*}
Since on cohomology we have, from Theorem \ref{Functoriality},
$(g^*_\phi\otimes Id)f^*_\phi= (Id\otimes \Xi^{X'}_{f,g})\circ (f\circ g)^*_\phi$, so $g^*_\phi\otimes Id= (Id\otimes \Xi^{X}_{f,g})\circ (f\circ g)^*_\phi\circ (f^*_\phi)^{-1}$ on cohomology. Hence we have again on cohomology,
\begin{multline*}
f^*_{\phi}[(-1)^{k(p-k)}(T'_{X'})^2] \rho^1_{h'}(Id\times (\Xi^{X}_{f,g})^{-1}) (g^*_{\phi}\otimes Id) (T_X\otimes Id)
=\\
 f^*_{\phi}\circ \rho^1_{h'} \circ (Id\times (\Xi^{X}_{f,g})^{-1}) (Id\otimes \Xi^{X}_{f,g})\circ (f\circ g)^*_\phi\circ (f^*_\phi)^{-1}(T_X\otimes Id)
= \\
f^*_{\phi} \circ \rho^1_{h'} \circ (f\circ g)^*_\phi\circ (f^*_\phi)^{-1}(T_X\otimes Id)
\end{multline*}
But, we know from the Poincar\'{e} Lemma for foliations that $\rho^1_{h'} \circ (f\circ g)^*_\phi$ induces the identity on cohomology. Therefore from the last line above we get the desired equality $f^*_{\phi}T'_{X'}(f^*_{\phi})^\sharp= T_X\otimes Id$ on cohomology.

It thus remains to prove \ref{Claim}. We compute  for $\psi_1 \in \maE_{X,E}, \psi_2\in \maE^X_{X'}(f), \eta_1 \in \maE_{X'E'}, \eta_2 \in \maE^{X'}_X(g)$ and $\eta_3 \in \maE^X_{X'}(f)$,
\begin{eqnarray*}
<\psi_1\otimes \psi_2, f^*_\phi\Lambda'(\eta_1\otimes\eta_2\otimes\eta_3)>&=& <\psi_1\otimes \psi_2, \epsilon_f \Psi^\phi_f\Lambda'(\eta_1\otimes \eta_2\otimes\eta_3)>\\
&=& <\psi_1\otimes \psi_2, \epsilon_f \Psi^\phi_f \rho^1_{h'} (Id\otimes (\Xi^X_{f,g})^{-1})(\eta_1\otimes \eta_2\otimes\eta_3)>
\end{eqnarray*}
But we have $f^*_\phi \circ \rho^1_{h'} = \rho^1_{h'} \circ (f^*_\phi\otimes Id)$. Therefore we get on cohomology,
\begin{eqnarray*}
<\psi_1\otimes \psi_2, \epsilon_f \Psi^\phi_f \rho^1_{h'} (Id\otimes (\Xi^X_{f,g})^{-1})(\eta_1\otimes \eta_2\otimes\eta_3)> & = &  <\psi_1\otimes \psi_2, \rho^1_{h'}(\epsilon_f \Psi^\phi_f \otimes Id)(Id\otimes (\Xi^X_{f,g})^{-1})(\eta_1\otimes \eta_2\otimes\eta_3)>\\
&=& <\psi_1\otimes \psi_2, \rho^1_{h'}(Id\otimes (\Xi^X_{f,g})^{-1})(\epsilon_f \Psi^\phi_f \otimes Id)(\eta_1\otimes \eta_2\otimes\eta_3)>\\
&=& <\psi_1\otimes \psi_2, \rho^1_{h'}((g\circ f)^*_\phi (g^*_\phi)^{-1})(\eta_1\otimes \eta_2\otimes\eta_3)>\\
&=& <\psi_1\otimes \psi_2, ((\Psi^\phi_g)^{-1}\epsilon^{-1}_g\otimes Id)(\eta_1\otimes \eta_2\otimes\eta_3)>\\
&=& <\psi_1\otimes \psi_2, \lambda_{g,\phi}^*\epsilon^{-1}_g(\eta_1\otimes \eta_2)\otimes\eta_3>\\
&=& <\epsilon_g\circ (\lambda_{g,\phi}^*)^\sharp\psi_1\otimes \psi_2, \eta_1\otimes \eta_2\otimes\eta_3>
\end{eqnarray*}
where $(\lambda_{g,\phi}^*)^\sharp$ is the adjoint of the operator $\lambda_{g,\phi}^*$ defined above. To compute $(\lambda_{g,\phi}^*)^\sharp$, we recall that the inner product on $C^\infty_c(\maG^{V'}_{X}(g),\pi_1^* E')$ is given,   for $\xi_1,\xi_2 \in C^\infty_c(\maG^{V'}_{X}(g),\pi_1^* E')$, by:
$$
<\xi_1,\xi_2>(\gamma)= \int_{\maG^{V'}_{r(\gamma)}(g)} \xi_1\wedge\star'_g \xi_2,
$$
where $\star'_g$ is the Hodge $\star$-operator on $\maG^{V'}_X(g)$. Now let $\eta\in C^\infty_c(\maG^{V'}_{X}(g),\pi_1^* E')$ and $ \xi   \in C^\infty_c(\maG_X,r^*E)$ be closed $k$-forms.
Then we have,
\begin{eqnarray*}
<\lambda_{g,\phi}^*\eta,\xi>(\gamma)
&=& \int_{\maG_{r(\gamma)}} (\lambda^*_{g,\phi})_{r(\gamma)}\eta \wedge\star \xi \\
&=& \int_{\maG_{r(\gamma)}}  (\lambda^*_{g,\phi})_{r(\gamma)}\eta \wedge (\lambda^*_{g,\phi})_{r(\gamma)} \circ (\pi_{2,g,\phi}^*)_{r(\gamma)} \star \xi\\
&=& \int_{\maG_{r(\gamma)}} (\lambda^*_{g,\phi})_{r(\gamma)} ( \eta \wedge (\pi_{2,g,\phi}^*)_{r(\gamma)} \star \xi) \\
&=& \int_{\maG^{V'}_{r(\gamma)}(g)} \eta \wedge(\pi_{2,g,\phi}^*)_{r(\gamma)}\star \xi \\
&=& \int_{\maG^{V'}_{r(\gamma)}(g)} \eta \wedge \star'_g((-1)^{k(p-k)}\star'_g (\pi_{2,g,\phi}^*)_{r(\gamma)} \star) \xi \\
&=& <\eta, (-1)^{k(p-k)}\star'_g(\pi_{2,g,\phi}^*)_{r(\gamma)} \star) \xi>(\gamma)
\end{eqnarray*}
where in the above computation we have used the fact that $\pi_{2,g,\phi}^*$ is the inverse of $\lambda_{g,\phi}^*$ on cohomology and since $\lambda_g$ is a homotopy equivalence, $\lambda^*_{g,\phi}$ preserves fundamental cycles. Therefore, $(\lambda_{g,\phi}^*)^\sharp_{x}= (-1)^{k(p-k)}\star'_g(\pi_{2,g,\phi}^*)_{x} \star $, and so it induces an adjoint on the Hilbert-modules given by $(\lambda_{g,\phi}^*)^\sharp= (-1)^{k(p-k)}T'_g \pi_{2,g,\phi}^* T_X = (-1)^{k(p-k)}T'_g \Psi^\phi_g T_X $. Thus we have,
\begin{eqnarray*}
\label{lhsf}
&&<\psi_1\otimes \psi_2, \epsilon_f \Psi^\phi_f \rho^1_{h'} (Id\otimes (\Xi^X_{f,g})^{-1})(\eta_1\otimes \eta_2\otimes\eta_3)>\\
\hspace{-2cm}&=& <\epsilon_g\circ ((-1)^{k(p-k)}T'_g \Psi^\phi_g T_X)\psi_1\otimes \psi_2, \eta_1\otimes \eta_2\otimes\eta_3>\nonumber\\
\hspace{-2cm}&=& (-1)^{k(p-k)} <(\epsilon_g\circ T'_g \otimes Id)(\Psi^\phi_g\otimes Id) (T_X\psi_1\otimes \psi_2), \eta_1\otimes \eta_2\otimes\eta_3>
\end{eqnarray*}
Now a similar computation gives,
\begin{eqnarray}
\label{rhsf}
&&< (-1)^{k(p-k)}[T'_{X'} \Lambda' (g^*_\phi \otimes Id) (T_X\otimes Id)](\psi_1\otimes \psi_2),\Lambda'(\eta_1\otimes \eta_2\otimes \eta_3)>\\
\hspace{-3cm}&=& < [\Lambda' (T'_g\otimes Id) (\epsilon_g\Psi^\phi_g \otimes Id) (T_X\otimes Id)](\psi_1\otimes \psi_2),\Lambda'(\eta_1\otimes \eta_2\otimes \eta_3)>\nonumber\\
\hspace{-3cm}&=& (-1)^{k(p-k)}< (T'_g\otimes Id) (\epsilon_g\Psi^\phi_g \otimes Id) (T_X\otimes Id)(\psi_1\otimes \psi_2),\eta_1\otimes \eta_2\otimes \eta_3>\nonumber\\
\hspace{-3cm}&=& (-1)^{k(p-k)} < (\epsilon_g T'_g\otimes Id) (\Psi^\phi_g \otimes Id) (T_X\otimes Id)(\psi_1\otimes \psi_2),\eta_1\otimes \eta_2\otimes \eta_3>,
\end{eqnarray}
where in the above computations we have used the facts that $\Lambda'$ is an isometric isomorphism and $\epsilon_g$ intertwines the Poincar\'{e} duality operators. The above computations thus yield the equality on closed $k$-forms,
$$
 <\psi_1\otimes \psi_2, f^*_\phi\Lambda'(\eta_1\otimes\eta_2\otimes\eta_3)>_{\maE_{X,E}\otimes \maE^X_{X'}(f)} = (-1)^{k(p-k)}< [T'_{X'} \Lambda' (g^*_\phi \otimes Id) (T_X\otimes Id)](\psi_1\otimes \psi_2),\Lambda'(\eta_1\otimes \eta_2\otimes \eta_3)>_{\maE_{X',E'}}.
 $$
 Thus the proof of the proposition is complete.
\end{proof}

\end{document}